\newcommand*{\justifyheading}{\centering}
\titleformat{\chapter}[display]{\normalfont\large\rmfamily\bfseries
\singlespacing\justifyheading}
{\MakeUppercase\chaptertitlename\  \thechapter}{20pt}{\uppercase}
\titlespacing*{\chapter}{0pt}{-50pt}{-3.5pt}
\titlespacing{\section}{-10pt}{-15pt}{-7.5pt}
\titlespacing{\subsection}{-10pt}{-10pt}{-5pt}
\titlespacing{\subsubsection}{-10pt}{-5pt}{-2.5pt}
\newtheorem{remarks}{Remarks}[section]
\newtheorem{definition}{Definition}[section]
\newtheorem{theorem}{Theorem}[section]
\newtheorem{proposition}[theorem]{Proposition}
\newtheorem{corollary}[theorem]{Corollary}
\newtheorem{remark}{Remark}[section]
\newenvironment{proof}{{\bf Proof.}}{\hspace{\stretch{1}}
	$\square$}
\newaliascnt{lemma}{theorem}
\title{Fixed point theorems for asymptotically $T$-regular
mappings in preordered modular G-metric
spaces applied to solving nonlinear %
integral equations}
\date{\vspace{-5ex}}
\author[1]{\small Godwin Amechi Okeke\footnote{Corresponding author: G.A. Okeke}}
\author[2]{\small Daniel Francis}
\affil[1,2]{\small Department of Mathematics, School
of Physical Sciences, Federal University of Technology
 Owerri, P.M.B. 1526 Owerri, Imo State, Nigeria}
\affil[$\ast1$]{\small E-mail address: gaokeke1@yahoo.co.uk, godwin.okeke@futo.edu.ng}
\affil[2]{\small E-mail address: francis.daniel@mouau.edu.ng}
\begin{document}
	\maketitle
	\allsectionsfont{\centering}
	\allowdisplaybreaks
	\numberwithin{equation}{section}
	\pagenumbering{roman}
	\hypersetup{linkcolor=red}
	\pagenumbering{arabic}
	\pagestyle{myheadings}

\begin{abstract}
Our aim in this paper is to prove some interesting fixed point theorems for the class of
asymptotically $T$-regular mappings in the framework of %
 preordered modular G-metric spaces. Our results are novel and generalizes several know results.
 Furthermore we apply our results in solving nonlinear %
integral equations. %
\end{abstract}
{\bf Keywords:} Fixed point,  preordered, modular %
G-metric spaces, asymptotically $T$-regular mapping, existence and %
uniqueness, nonlinear integral equations.\\ %
{\bf 2010 Mathematics Subject Classification: 47H09; 47H10; 06A75.}%

\section{Introduction}

\qquad  Geraghty \cite{13} in 1973 came up with an %
interesting generalization of Banach contraction %
mapping principle using the concept of class %
$\mathcal{S}$  of functions, that is $\kappa: %
\mathbb{R}_{+}\rightarrow [0,1)$ with the %
condition that $\kappa(t_{n})\rightarrow %
1$ implies that $t_{n}\rightarrow 0$ where $\mathbb{R}_{+}$ %
is the set of all non-negative real number %
and $t\in\mathbb{R}_{+}$ for all $n\in\mathbb{N}$ and proved %
that fixed point in this class exists.%

\qquad In 2012, Gordji {\it et al.} \cite{27} proved some %
fixed point theorems for generalized  Geraghty's %
contraction in partially ordered complete metric spaces and %
Bhaskar and Lakshmikantham \cite{22}  proved a fixed point %
 theorem for a mixed %
 monotone mapping in a metric space endowed with partial %
 order, using a weak contractivity type  of assumption. %

 Yolacan \cite{e} established some new fixed point %
 theorems in 0-complete ordered partial metric spaces, %
 and also remarked on coupled generalized Banach %
 contraction mapping. Faraji {\it et al.} \cite{h} %
 extended some fixed point theorems for Geraghty %
 contractive mappings in b-complete b-metric spaces. %
 However, Gupta {\it et al.} \cite{v}, established %
some fixed point theorems in an ordered complete metric %
spaces using distance function. Chaipunya {\it et al.} \cite{1} %
proved some fixed point theorems of Geraghty-type %
contractions concerning the existence and uniqueness %
of fixed points under the setting of modular metric %
spaces which also generalized the results in %
Gordji {\it et al.}\cite{27} %
under the influence of a modular metric spaces. %

 Geraghty-type contractive mappings in %
 metric spaces was generalized to the concept of %
 preordered  G-metric spaces in \cite{G} and %
 obtained unique fixed point results. 

 \qquad In 2010, a remarkable study by Chistyakov %
 \cite{12} introduced  an aspect of metric spaces called  %
 modular metric spaces or parameterized metric spaces %
 with the time parameter $\lambda$ (say) and his %
 purpose was  to define the notion of a modular on an %
 arbitrary set, and developed the theory of metric spaces %
 generated by modulars, called modular metric spaces %
 and, on the basis of it, defined new metric spaces of %
 (multi-valued) functions of bounded generalized variation %
  of a real variable with values in metric semi-groups and %
  abstract convex cones. %

 \qquad In the same year, Chistyakov \cite{23}, as an %
 application presented an exhausting description %
 of Lipschitz continuous and some other classes %
 of superposition (Nemytskii) operators, acting %
 in these modular metric spaces. He developed %
 the theory of metric spaces generated by modulars, %
 and extended the results given by Nakano \cite{x6}, %
 Musielak and Orlicz \cite{x3},  Musielak \cite{x2} %
 to modular metric spaces. %
Modular spaces are extensions of Lebesgue, %
Riesz, and Orlicz spaces of integrable functions. %

 \qquad Modular theories on linear spaces can be found %
 by Nakano in his two monographs \cite{x6,x7}, %
 where he developed a spectral theory in semi-ordered %
 linear spaces (vector lattices) and established the %
 integral representation for projections acting in %
 this modular spaces. %
Nakano \cite{x6} established some modulars on %
 real linear spaces which are convex functional. %
 Non-convex modulars and the corresponding modular %
 linear spaces were constructed by Musielak and %
 Orlicz \cite{x3}. Orlicz spaces and modular linear %
 spaces have already become classical tools in modern %
 nonlinear functional analysis. %

 \qquad The development of theory of metric spaces %
generated by modulars, called modular metric spaces %
attracted  many research mathematicians still  %
investigating fixed point results in this area including %
Chistyakov himself. Chistyakov \cite{3} also established some %
fixed point theorems for contractive maps in modular spaces. %
It is related to contracting, rather generalized average %
velocities than metric distances, and the successive %
approximations of fixed points converge to the %
fixed points in a weaker sense as compared to the %
metric convergence in \cite{3} and other fixed point results %
in modular metric spaces can be  found in \cite{18} and \cite{11}. %
 Considering applicability, these fixed point results are
 applied in solving the fixed point of nonlinear %
 integral equations  see \cite{18,19} and references therein, while %
 \cite{1} deals with application to partial differential equation, %
 in modular metric spaces. Interested reader should see \cite{ok18,ok9,28} 
and the references therein for further studies in modular function spaces.%

\qquad In 2013, Azadifar {\it et al}. \cite{100} introduced the %
concept of modular G-metric space and obtained some fixed point %
theorems of contractive mappings defined on modular G-metric spaces. %

\qquad Recently, Okeke {\it et al.} \cite{new}  proved %
some existence and uniqueness of fixed point %
theorems for mappings satisfying rational %
inequality in modular metric spaces of which %
Geraghty-type theorems in \cite{1} and Reich contraction %
mapping principles are special cases. In this work, we will %
examine some asymptotically $T$-regular mapping theorems %
in preordered modular G-metric spaces which is an %
extension of \cite{new}. %

\qquad In this present paper, we  %
extend the fixed point result obtained in \cite{new} to %
asymptotically $T$-regular mapping theorems %
in preordered modular G-metric spaces. Also  our %
result shows that  the system of nonlinear  %
integral equations have a unique solution in %
preordered modular G-metric %
spaces, $X_{\omega^{G}}.$ %

\section{Preliminaries}

\begin{definition}\cite{ana}%
A perorder set $X$ is a relation $\preceq$ that is both, %
\begin{enumerate}%
\item[(i)] transitive  i.e; $x\preceq y$ and %
$y\preceq z$ implies $x\preceq z$ and, %

\item[(ii)] reflexive i.e; $x\preceq x.$ %
\end{enumerate}%
A preordered set is a pair $(X, \preceq)$  consisting %
 of  a set $X$ and a preorder $\preceq$ on $X.$ %
\end{definition}%

\begin{remark}%
  If a preorder $\preceq$ is antisymmetic i.e;  %
  $x\preceq y$ and $y\preceq x$ implies %
  $x= y$, then $\preceq$ %
  is called a partial order. %
\end{remark}

\begin{definition}\cite{13}%
Let $\mathcal{S}$ be the family of all %
Geraghty functions, that is functions %
 $\alpha:[0,\infty)\rightarrow [0,1)$ %
 satisfying the  condition %
 $\{\alpha(t_{n})\}\rightarrow 1\implies %
 \{t_{n}\}\rightarrow 0$.	%
\end{definition}

We will in this work  take $\mathcal{S}_{Ger}$ %
as the class of all Geraghty functions. %
Such Geraghty class was discussed in \cite{1}.%

\begin{definition}\cite{1}%
	Let $\mathcal{S}$ be the family of all %
	Geraghty functions, that is functions %
	$\beta_{i}:\mathbb{R}_{+}\cup \{\infty\} %
	\rightarrow [0,1)$ satisfying the  condition %
	 $\beta_{i}(t_{k})\rightarrow \frac{1}{n}$ %
	implies that $\{t_{k}\}\rightarrow 0$ for all $i$.	%
\end{definition}%

\begin{definition}\cite{1}%
	Let $\Psi$ be the class of functions $\psi: %
	\mathbb{R}_{+}\rightarrow\mathbb{R}_{+}$ such %
	that the following conditions hold; %
	\begin{enumerate}%
		
		\item[(1)] $\psi$ is decreasing, %
		
		\item[(2)] $\psi$ is continuous, %
		
		\item[(3)] $\psi(t)=0$ if and only if $t=0.$ %
		
	\end{enumerate}	%
\end{definition}%

 Extension of Definition 2.2 above is as follows:%

\begin{definition}\cite{1}%
	Let $\bar{\Psi}$ be the class of functions %
	 $\psi:\mathbb{R}_{+}\cup\{\infty\}\rightarrow %
	\mathbb{R}_{+}\cup\{\infty\}$ such that the following conditions hold; %
	\begin{enumerate}%
		\item[(a)] $\psi$ is sub-additive, %
		
		\item[(b)] $\psi(t)$ is finite for $0<t<\infty,$ %
		
		\item[(c)] $\psi\lvert_{\mathbb{R}_{+}}\in\bar{\Psi}.$ %
	\end{enumerate}	%
\end{definition}%

\begin{definition}\cite{100}\label{2.1}%
	Let $X$ be a nonempty set, and let %
	 $\omega^{G}:(0,\infty)\times X\times X\times X %
	 \rightarrow [0,\infty]$ be a function satisfying; %
	\begin{enumerate}%
		\item[(1)] $\omega^{G}_{\lambda}(x,y,z)=0$ for %
		all $x,y,z\in X$ and $\lambda>0$ if $x=y=z,$ %
		
		\item[(2)] $\omega_{\lambda}^{G}(x,x,y)>0$ for %
		all $x,y\in X$ and $\lambda>0$ with $x\neq y,$ %
		
		\item[(3)] $\omega_{\lambda}^{G}(x,x,y)\le\omega_{ %
			\lambda}^{G}(x,y,z)$ for all $x,y,z\in X$ and %
		$\lambda>0$ with $z\neq y,$ %
		
		\item[(4)]  $\omega_{\lambda}^{G}(x,y,z)=\omega_{ %
			\lambda}^{G}(x,z,y) %
		=\omega_{\lambda}^{G}(y,z,x)=\cdots$ for all %
		 $\lambda>0$ (symmetry in all three variables), %
		
		\item[(5)] $\omega_{\lambda+\mu}^{G}(x,y,z)\le\omega_{ %
			\lambda}^{G}(x,a,a)+\omega_{\mu}^{G}(a,y,z)$, %
		for all $x,y,z,a\in X$ and $\lambda, \nu>0$, %
	\end{enumerate}%

then the function $\omega^{G}_{\lambda}$ %
is called a modular G-metric on $X$. %
\end{definition}%

\begin{remarks}%
	
	\begin{enumerate}%
		
		\item[(a)] The pair $(X, \omega^{G}) $ is %
		called a modular G-metric space, and without %
		any confusion we will take $X_{\omega^{G}}$ as a %
		modular G-metric space. %
		 From condition (5), if $\omega^{G}$ is convex, %
		then we have a strong form  as, %
		
		\item[(b)] $\omega^{G}_{\lambda+\mu}(x,y,z)\le\omega^{G}_{ %
			\frac{\lambda}{\lambda+\mu}}(x,a,a)+\omega^{G}_{ %
			\frac{\mu}{\lambda+\mu}}(a,y,z)$, %
		
		\item[(c)]	If $x=a$, then (5) above becomes %
		 $\omega_{\lambda+\mu}^{G}(a,y,z)\le\omega_{\mu}^{G}(a,y,z)$, %
		
		\item[(d)] Condition (5) is called rectangle inequality. %
		
	\end{enumerate}%
	
\end{remarks}	%

\begin{definition}\cite{100}%
	Let $(X, \omega^{G})$ be a modular G-metric space. %
	The sequence $\{x_{n}\}_{n\in\mathbb{N}}$ in $X$ is %
	 modular G-convergent to $x$, if it converges to %
	  $x$ in the topology $\tau(\omega^{G}_{\lambda})$. %
	  \newline
	A function $T: X_{\omega^{G}}\rightarrow X_{\omega^{G}} $ %
	at $x\in X_{\omega^{G}}$ is called modular G-continuous if %
	 $\omega^{G}_{\lambda}(x_{n},x,x)\rightarrow 0$,  then %
	$\omega^{G}_{\lambda}(Tx_{n},Tx,Tx)\rightarrow 0$, for all %
	 $\lambda>0 .$ %
\end{definition}

\begin{remark}%
	 $\{x_{n}\}_{n\in\mathbb{N}}$  modular G-converges to %
	  $x$ as $n\rightarrow\infty$, if %
	   $\lim\limits_{n\rightarrow\infty}\omega^{G}_{\lambda} %
	  (x_{n},x_{m},x)=0$. That is for all $\epsilon>0$ there %
	  exists $n_{0}\in\mathbb{N}$ such that %
	   $\omega^{G}_{\lambda}(x_{n},x_{m},x)<\epsilon$ for all %
	  $n,m\geq n_{0}$. Here we say that $x$ is modular G-limit of %
	   $\{x_{n}\}_{n\in\mathbb{N}}$. %
\end{remark}%

\begin{definition}\cite{100}%
	Let $(X, \omega^{G})$ be a modular G-metric space, then $\{ %
	 x_{n}\}_{n\in\mathbb{N}}\subseteq X_{\omega^{G}}$ is %
	 said to be  modular G-Cauchy if for every  $\epsilon>0$, %
	 there exists $n_{\epsilon}\in \mathbb{N}$ such that %
	$\omega^{G}_{\lambda}(x_{n},x_{m},x_{l})<\epsilon$ for all %
	 $n,m,l\geq n_{\epsilon}$ and $\lambda>0$.\\ %
	A modular G-metric space $X_{\omega^{G}}$ is said to be %
	modular G-complete if every modular G-Cauchy sequence %
	in $X_{\omega^{G}}$ is  modular G-convergent in $X_{\omega^{G}}$. %
\end{definition}%

\begin{proposition}\cite{100}\label{2.2}%
	Let $(X, \omega^{G})$ be a modular G-metric space, %
	for any $x,y,z,a\in X$, it follows that: %
	\begin{enumerate}
		\item [(1)] If $\omega_{\lambda}^{G}(x,y,z)=0$ for all %
		 $\lambda>0$, then $x=y=z.$ %
		
		\item [(2)] $ \omega_{\lambda}^{G}(x,y,z)\le %
		 \omega_{\frac{\lambda}{2}}^{G}(x,x,y)+\omega_{ %
			\frac{\lambda}{2}}^{G}(x,x,z)$ for  all $\lambda>0 .$ %
		
		\item [(3)]  $\omega_{\lambda}^{G}(x,y,y)\le %
		 2\omega_{\frac{\lambda}{2}}^{G}(x,x,y)$ for  all $\lambda>0.$ %
		
		\item [(4)] $\omega_{\lambda}^{G}(x,y,z)\le %
		 \omega_{\frac{\lambda}{2}}^{G}(x,a,z)+\omega_{\frac{ %
				\lambda}{2}}^{G}(a,y,z)$ for  all $\lambda>0.$ %
			
		\item[(5)] $\omega_{\lambda}^{G}(x,y,z)\le %
		 \frac{2}{3}(\omega_{\frac{\lambda}{2}}^{G}(x,y,a)+ %
		\omega_{\frac{\lambda}{2}}^{G}(x,a,z) +\omega_{ %
			\frac{\lambda}{2}}^{G}(a,y,z))$ for  all $\lambda>0.$ %
		
		\item[(6)] $\omega_{\lambda}^{G}(x,y,z)\le %
		 \omega_{\frac{\lambda}{2}}^{G}(x,a,a)+\omega_{\frac{ %
				\lambda}{4}}^{G}(y,a,a) %
		+\omega_{\frac{\lambda}{4}}^{G}(z,a,a)$ for  all $\lambda>0.$ %
	\end{enumerate}%
\end{proposition}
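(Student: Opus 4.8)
The plan is to derive all six inequalities directly from the five defining axioms of Definition \ref{2.1}, using the rectangle inequality (axiom (5)) as the main engine, the full symmetry (axiom (4)) to permute arguments freely, and the monotonicity property (axiom (3)) to absorb a repeated argument into a distinct one. Since (3), (5) and (6) will fall out as corollaries of (2) and (4), the genuine work lies in (1), (2) and (4).

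For (1) I would argue by contraposition: assuming $x,y,z$ are not all equal, I show $\omega^{G}_{\lambda}(x,y,z)>0$ for every $\lambda>0$, contradicting the hypothesis. If exactly two of them coincide, symmetry (4) rewrites $\omega^{G}_{\lambda}(x,y,z)$ as some $\omega^{G}_{\lambda}(p,p,q)$ with $p\neq q$, which is strictly positive by axiom (2). If all three are distinct, then $z\neq y$, so axiom (3) gives $\omega^{G}_{\lambda}(x,x,y)\le\omega^{G}_{\lambda}(x,y,z)$ with the left side positive by (2). Hence $x=y=z$.

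For (2) the key step is to apply the rectangle inequality with split point $a=x$ after a symmetry rotation. Writing $\omega^{G}_{\lambda}(x,y,z)=\omega^{G}_{\lambda}(y,x,z)$ and splitting $\lambda=\tfrac{\lambda}{2}+\tfrac{\lambda}{2}$ in axiom (5) with $a=x$ yields $\omega^{G}_{\lambda}(y,x,z)\le\omega^{G}_{\lambda/2}(y,x,x)+\omega^{G}_{\lambda/2}(x,x,z)$; symmetry then identifies $\omega^{G}_{\lambda/2}(y,x,x)=\omega^{G}_{\lambda/2}(x,x,y)$, giving (2). Item (3) is immediate by setting $z=y$.

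Item (4) is the technical heart. The rectangle inequality with split point $a$ and $\lambda=\tfrac{\lambda}{2}+\tfrac{\lambda}{2}$ gives $\omega^{G}_{\lambda}(x,y,z)\le\omega^{G}_{\lambda/2}(x,a,a)+\omega^{G}_{\lambda/2}(a,y,z)$, after which I need the monotonicity fact $\omega^{G}_{\lambda/2}(x,a,a)\le\omega^{G}_{\lambda/2}(x,a,z)$; this follows from axiom (3) via symmetry (reading $\omega^{G}(x,a,a)=\omega^{G}(a,a,x)\le\omega^{G}(a,x,z)=\omega^{G}(x,a,z)$ when $z\neq x$), with the degenerate case $z=x$ checked directly from (5). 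With (4) secured, three permuted copies are produced by rotating which of $x,y,z$ is replaced by $a$; adding them gives $3\,\omega^{G}_{\lambda}(x,y,z)\le 2\bigl(\omega^{G}_{\lambda/2}(x,y,a)+\omega^{G}_{\lambda/2}(x,a,z)+\omega^{G}_{\lambda/2}(a,y,z)\bigr)$, which is (5). Finally (6) comes from two nested applications of axiom (5): split off $x$ with parameters $\tfrac{\lambda}{2}+\tfrac{\lambda}{2}$, then split the residual $\omega^{G}_{\lambda/2}(a,y,z)=\omega^{G}_{\lambda/2}(y,z,a)$ again with $\tfrac{\lambda}{4}+\tfrac{\lambda}{4}$ about $a$, and tidy up with symmetry. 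The main obstacle is bookkeeping: keeping the additive splits consistent so the fractions $\tfrac{\lambda}{2},\tfrac{\lambda}{4}$ come out exactly, and tracking the permutations allowed by axiom (4); isolating the monotonicity step used in (4) is the one genuinely non-mechanical point, and everything else is routine once it is in place.
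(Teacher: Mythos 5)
The paper does not actually prove Proposition \ref{2.2}: it is imported verbatim from Azadifar \emph{et al.} \cite{100} and used as a black box, so there is no in-paper argument to compare yours against. Judged on its own, your derivation from the five axioms of Definition \ref{2.1} is essentially correct and complete in outline: the contrapositive argument for (1) via axioms (2)--(4), the split $\lambda=\tfrac{\lambda}{2}+\tfrac{\lambda}{2}$ about $a=x$ after a symmetry rotation for (2), the specialisation $z=y$ for (3), the ``replace one variable by $a$'' reading of (4) summed over the three rotations to get the factor $\tfrac{2}{3}$ in (5), and the nested double application of axiom (5) with parameters $\tfrac{\lambda}{2},\tfrac{\lambda}{4},\tfrac{\lambda}{4}$ for (6) all check out, including the bookkeeping of which argument axiom (5) splits off.

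The one step you should repair is the degenerate case $z=x$ in item (4). The monotonicity fact you isolate, $\omega^{G}_{\lambda/2}(x,a,a)\le\omega^{G}_{\lambda/2}(x,a,z)$, is obtained from axiom (3) only under the restriction $z\neq x$, and when $z=x$ the would-be inequality $\omega^{G}_{\lambda/2}(a,a,x)\le\omega^{G}_{\lambda/2}(x,x,a)$ is \emph{not} derivable from the axioms (in G-metric theory one only has the two-sided comparison with a factor $2$, which is exactly item (3) of this proposition). The case is still fine, but by a different split: for $x\neq y$ write $\omega^{G}_{\lambda}(x,y,x)=\omega^{G}_{\lambda}(y,x,x)\le\omega^{G}_{\lambda/2}(y,a,a)+\omega^{G}_{\lambda/2}(a,x,x)$ by axiom (5) applied about $a$ to the first argument $y$, then use axiom (3) with the third slot filled by $x\neq y$ to get $\omega^{G}_{\lambda/2}(y,a,a)=\omega^{G}_{\lambda/2}(a,a,y)\le\omega^{G}_{\lambda/2}(a,y,x)$, while $\omega^{G}_{\lambda/2}(a,x,x)=\omega^{G}_{\lambda/2}(x,a,x)$ by symmetry; the case $x=y$ is trivial since then the left-hand side vanishes by axiom (1). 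With that substitution your ``checked directly from (5)'' becomes an actual argument and the whole proof stands.
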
%

\begin{proposition}\cite{100}\label{2.3}
	Let $(X, \omega^{G})$ be a modular G-metric space and  %
	$ \{x_{n}\}_{n\in\mathbb{N}}$ be a sequence in %
	$X_{\omega^{G}}.$  Then the following are equivalent: %
	\begin{enumerate}%
		\item[(1)] $\{x_{n}\}_{n\in\mathbb{N}}$ is $\omega^{G}$-convergent to $x$, %
		
		\item[(2)] $\omega^{G}_{\lambda}(x_{n},x)\rightarrow 0$ as $n\rightarrow \infty,$ i.e; %
		 $\{x_{n}\}_{n\in\mathbb{N}}$ converges to $x$ relative to modular metric %
		  $\omega^{G}_{\lambda}(.)$, %
		
		\item[(3)] $\omega^{G}_{\lambda}(x_{n},x_{n},x)\rightarrow0$ as $n\rightarrow %
		 \infty$ for all $\lambda>0,$ %
		
		\item[(4)] $\omega^{G}_{\lambda}(x_{n},x,x)\rightarrow 0$ as $n\rightarrow %
		 \infty$ for all $\lambda>0,$ %
		
		\item[(5)] $\omega^{G}_{\lambda}(x_{m},x_{n},x)\rightarrow 0$ as $m,n\rightarrow %
		 \infty$ for all $\lambda>0.$ 
	\end{enumerate}%
\end{proposition}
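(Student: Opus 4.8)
The plan is to organise the five statements into the three ``concrete'' conditions (3), (4), (5), which involve only the explicit values of $\omega^{G}_{\lambda}$, and the two ``intrinsic'' conditions (1) and (2), which refer to the topology $\tau(\omega^{G}_{\lambda})$ and to the induced (two--variable) modular metric. I would first prove the cyclic equivalence $(3)\Leftrightarrow(4)\Leftrightarrow(5)$ purely by feeding the sequence into the triangle--type inequalities of Proposition \ref{2.2} and using the permutation symmetry of $\omega^{G}_{\lambda}$ (axiom (4) of Definition \ref{2.1}), and then tie (1) and (2) into this cycle through their defining properties.

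For $(3)\Leftrightarrow(4)$ I would use Proposition \ref{2.2}(3), namely $\omega^{G}_{\lambda}(x,y,y)\le 2\,\omega^{G}_{\lambda/2}(x,x,y)$. Substituting $x\mapsto x_{n}$, $y\mapsto x$ gives
\begin{equation*}
\omega^{G}_{\lambda}(x_{n},x,x)\le 2\,\omega^{G}_{\lambda/2}(x_{n},x_{n},x),
\end{equation*}
so (3) (which holds for every $\lambda>0$, hence for $\lambda/2$) forces (4). Conversely, writing $\omega^{G}_{\lambda}(x_{n},x_{n},x)=\omega^{G}_{\lambda}(x,x_{n},x_{n})$ by symmetry and applying the same inequality yields $\omega^{G}_{\lambda}(x_{n},x_{n},x)\le 2\,\omega^{G}_{\lambda/2}(x_{n},x,x)$, so (4) forces (3).

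For the cycle with (5) I would note that putting $m=n$ in (5) is literally (3), giving $(5)\Rightarrow(3)$ at once. For the converse I would apply Proposition \ref{2.2}(4) with $x\mapsto x_{m}$, $y\mapsto x_{n}$, $z\mapsto x$, $a\mapsto x$, obtaining
\begin{equation*}
\omega^{G}_{\lambda}(x_{m},x_{n},x)\le \omega^{G}_{\lambda/2}(x_{m},x,x)+\omega^{G}_{\lambda/2}(x_{n},x,x),
\end{equation*}
after rewriting the second summand by symmetry; both terms tend to $0$ as $m,n\to\infty$ by (4), so $(4)\Rightarrow(5)$ and the cycle $(3)\Leftrightarrow(4)\Leftrightarrow(5)$ closes.

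It remains to splice in (1) and (2). Condition (1) is $\omega^{G}$-convergence in $\tau(\omega^{G}_{\lambda})$, which by its defining characterisation (equivalently the condition $\lim_{m,n}\omega^{G}_{\lambda}(x_{n},x_{m},x)=0$ recorded just after the definition of modular G-convergence) is exactly (5); and condition (2), convergence relative to the induced modular metric $\omega^{G}_{\lambda}(\cdot,\cdot)$, is comparable to $\omega^{G}_{\lambda}(x_{n},x_{n},x)$ up to the constants appearing in Proposition \ref{2.2}(2)--(3), hence equivalent to (3). I expect this final splicing to be the only delicate point: once one grants the explicit inequalities of Proposition \ref{2.2}, the equivalences among (3), (4), (5) are essentially mechanical, so the real care goes into unwinding the definitions of the topology $\tau(\omega^{G}_{\lambda})$ and of the two--variable modular metric, which are the only ingredients not already expressed as triangle inequalities.
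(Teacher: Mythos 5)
This proposition is quoted in the paper from \cite{100} with no proof supplied, so there is no in-paper argument to compare yours against; judged on its own terms, your proof is essentially sound. The cycle $(3)\Leftrightarrow(4)\Leftrightarrow(5)$ is handled correctly: Proposition \ref{2.2}(3) together with the full symmetry axiom gives both directions of $(3)\Leftrightarrow(4)$ (note that for $(3)\Rightarrow(4)$ you need condition (3) at the parameter $\lambda/2$, which is harmless since it is assumed for all $\lambda>0$); the diagonal $m=n$ gives $(5)\Rightarrow(3)$; and Proposition \ref{2.2}(4) with $a=z=x$ gives $(4)\Rightarrow(5)$. For $(1)\Leftrightarrow(5)$ you are right that the only handle this paper provides on the topology $\tau(\omega^{G}_{\lambda})$ is the remark following the definition of modular G-convergence, which states the characterisation $\lim_{n,m\to\infty}\omega^{G}_{\lambda}(x_{n},x_{m},x)=0$, i.e.\ exactly condition (5); leaning on that remark is the only available move. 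The one genuine soft spot is $(2)$: the two-variable expression $\omega^{G}_{\lambda}(x_{n},x)$ is never defined anywhere in this paper, so the claim that it is ``comparable to $\omega^{G}_{\lambda}(x_{n},x_{n},x)$ up to constants'' cannot be verified from the text alone — it requires importing from \cite{100} the definition of the modular metric induced by a modular G-metric (the standard choice being $\omega_{\lambda}(x,y)=\omega^{G}_{\lambda}(x,y,y)+\omega^{G}_{\lambda}(x,x,y)$, under which your two-sided comparison with (3) and (4) does go through). You correctly flag this as the delicate point, but as written it is an appeal to an undefined object rather than a completed step; to close it you would need to state the induced metric explicitly and then the equivalence with (3)--(4) is immediate from what you have already proved.
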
%

We give the following definitions %
which will form the basis of the results %
 in this section 3 below.%

\begin{definition}%
	An ordered modular G-metric space is a %
triple $(X,\omega^{G},\preccurlyeq)$ where %
	 $(X,\omega)$ is a modular metric space %
and $\preccurlyeq$ is a partial order on %
	 $X_{\omega^{G}}.$ If $\preceq$ is a %
preorder on $X_{\omega^{G}}$, then %
	 $(X,\omega^{G},\preceq)$ is a %
preordered modular G-metric space. %
\end{definition}%

\begin{definition}%
	A sequence $\{T^{n}x_{0}\}_{n\in\mathbb{N}}$ in a preordered %
	 modular G-metric space  $(X,\omega^{G},\preceq)$ %
	 is asymptotically $T$-regular at $x_{0}\in X$ if for all %
	 $\lambda>0$, $\lim\limits_{n\rightarrow\infty}\omega_{\lambda} %
	 (T^{n}x_{0},T^{n+1}x_{0},T^{n+1}x_{0})=0$. %
\end{definition}%

\section{Main Results}
	
\begin{theorem}\label{3.1} %
Let $(X,\omega^{G},\preceq)$ be a complete preordered %
 modular G-metric space. %
 Let $T:X_{\omega^{G}}\rightarrow X_{\omega^{G}}$ %
  be a non-decreasing, asymptotically $T$-regular map on some point %
  $x_{0}\in X_{\omega^{G}}$  %
  such that for each $\lambda>0$, there %
 is $\nu(\lambda)\in\lbrack 0,\lambda)$ such that the following %
  conditions hold: %
\begin{align}\label{a} %
(1)~~\psi(\omega^{G}_{\lambda}(Tx,Ty,Ty))\notag%
\le&%
\kappa_{1}(\psi(\omega^{G}_{\lambda}(x,y,y)))%
\psi(\omega^{G}_{\lambda+\nu(\lambda)}(x,y,y))\\ \notag%
+&\kappa_{2}(\psi(\omega^{G}_{\lambda}(x,y,y)))%
\psi(\omega^{G}_{\lambda}(x,Tx,Tx))\\ \notag
+& \kappa_{3}(\psi(\omega^{G}_{\lambda}(x,y,y)))%
\psi(\omega^{G}_{\lambda}(y,Ty,Ty))\\ \notag
+&\kappa_{4}(\psi(\omega^{G}_{\lambda}(x,y,y)))%
\psi(\omega^{G}_{\lambda}(Tx,Ty,Ty))\\ \notag
+&\kappa_{5}(\psi(\omega^{G}_{\lambda}(x,y,y)))%
\psi(\omega^{G}_{\lambda}(Tx,y,y))\\ \notag
+&\kappa_{6}(\psi(\omega^{G}_{\lambda}(x,y,y)))\\\notag %
\times&\psi\Set{\frac{\omega^{G}_{\lambda}(y,Ty,Ty)%
[1+\omega^{G}_{\lambda}(x,y,y)]}{1+\omega^{G}_{\lambda}(x,Tx,Tx)}}\\ \notag%
+&\kappa_{7}(\psi(\omega^{G}_{\lambda}(x,y,y)))\\\notag%
\times&\psi\Set{\frac{\omega^{G}_{\lambda}(y,Ty,Ty)%
[1+\omega^{G}_{\lambda}(x,Tx,Tx)]}{1+\omega^{G}_{\lambda}(x,y,y)}}\\ \notag%
+&\kappa_{8}(\psi(\omega^{G}_{\lambda}(x,y,y)))\\\notag%
\times&\psi\Set{\frac{\omega^{G}_{\lambda}(x,Tx,Tx)%
[1+\omega^{G}_{\lambda}(x,y,y)]}{1+\omega^{G}_{\lambda}(y,Tx,Tx)%
+\omega^{G}_{\lambda}(x,y,y)}}\\ \notag%
+&\kappa_{9}(\psi(\omega^{G}_{\lambda}(x,y,y)))\\\notag%
\times&\psi\Set{\frac{\omega^{G}_{\lambda}(x,y,y)%
[1+\omega^{G}_{\lambda}(y,Ty,Ty)]}{1+\omega^{G}_{\lambda}(Tx,Tx,y)%
+\omega^{G}_{\lambda}(Tx,Ty,Ty)}}\\ \notag%
+&\kappa_{10}(\psi(\omega^{G}_{\lambda}(x,y,y)))\\\notag%
\times&\psi\Set{\frac{\omega^{G}_{\lambda}(Tx,Ty,Ty)%
[1+\omega^{G}_{\lambda}(x,Ty,Ty)]}{1+\omega^{G}_{\lambda}(Tx,Tx,y)%
+\omega^{G}_{\lambda}(x,Ty,Ty)}}\\ \notag%
+&\kappa_{11}(\psi(\omega^{G}_{\lambda}(x,y,y)))\\%
\times&\psi\Set{\frac{\omega^{G}_{\lambda}(Tx,Ty,Ty)%
[1+\omega^{G}_{\lambda}(y,Ty,Ty)+\omega^{G}_{\lambda}(Tx,Tx,y)]}%
{1+\omega^{G}_{\lambda}(Tx,Tx,y)+\omega^{G}_{\lambda}(Tx,Ty,Ty)}},%
\end{align}%

where $\psi\in \bar{\Psi}$ , $\{\kappa_{1},\kappa_{2}, %
\cdots, \kappa_{11}\}\in \mathcal{S}_{Ger}$ and %
$\max\{\sup_{t\geq 0}\kappa_{1}(t),\sup_{t\geq 0}\kappa_{2}(t), %
\cdots, \sup_{t\geq 0}\kappa_{11}(t)\}<1.$ %
Assume that if a non-decreasing sequence $\{x_{n}\}_{n\in\mathbb{N}}$ %
converges to $x^{\ast}$, then $x_{n}\preceq x^{\ast}$ for %
each $n\in\mathbb{N}$,%
\newline%
$(2)$ if $\psi$ is sub-additive and for any $x,y\in X_{\omega^{G}}$,%
there exists $z\in X_{\omega^{G}}$ with $z\preceq Tz$ and %
$\omega^{G}_{\lambda}(z,Tz,Tz)$ is finite  for all $\lambda>0$ %
such that $z$ is comparable to both $x~{\rm and}~y$.%
Then $T$ has a  fixed point  $x^{\ast}\in X_{\omega^{G}}$ %
and the sequence define by $\{T^{n}x_{0}\}_{n\ge 1}$ converges to $x^{\ast}$.%
Moreover the fixed point of $T$ is unique. %
\end{theorem}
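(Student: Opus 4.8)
The plan is to run the Picard iteration and push it through the modular G-metric machinery of Propositions~\ref{2.2} and~\ref{2.3}, using the hypothesis of asymptotic $T$-regularity to annihilate most of the eleven terms on the right of \eqref{a} and the uniform bound $k:=\max_i\sup_{t\ge 0}\kappa_i(t)<1$ to produce the required contraction. First I would fix the point $x_0$ at which $T$ is asymptotically $T$-regular and set $x_n:=T^nx_0$. Taking $x_0$ to be a witness $z$ supplied by hypothesis $(2)$ we may assume $x_0\preceq Tx_0$, and since $T$ is non-decreasing this propagates to $x_n\preceq x_{n+1}$ for every $n$, so $\{x_n\}$ is a non-decreasing chain of mutually comparable iterates to which \eqref{a} applies. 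Asymptotic $T$-regularity gives at once $\omega^{G}_{\lambda}(x_n,x_{n+1},x_{n+1})\to 0$ for every $\lambda>0$, and this is the fact that will collapse the consecutive-distance terms below.

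The core of the argument is to show $\{x_n\}$ is modular G-Cauchy, which by Proposition~\ref{2.3} reduces to $\omega^{G}_{\lambda}(x_n,x_m,x_m)\to 0$. I would argue by contradiction: if not, there are $\epsilon>0$ and indices $m_k>n_k\ge k$ with $m_k$ chosen minimal so that $\omega^{G}_{\lambda}(x_{n_k},x_{m_k},x_{m_k})\ge\epsilon$ while $\omega^{G}_{\lambda}(x_{n_k},x_{m_k-1},x_{m_k-1})<\epsilon$. The rectangle inequality (Definition~\ref{2.1}(5)) together with the consecutive-distance convergence forces $\omega^{G}_{\lambda}(x_{n_k},x_{m_k},x_{m_k})\to\epsilon$, and likewise for its one-step shifts. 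Substituting $x=x_{n_k}$, $y=x_{m_k}$ into \eqref{a} and letting $k\to\infty$, every term whose $\psi$-argument is a consecutive distance (terms $2$ and $3$, the factor in term $1$ after controlling the shift $\lambda+\nu(\lambda)$, term $5$, and the quotients whose numerator is a consecutive distance) tends to $\psi(0)=0$ by continuity of $\psi$, while each surviving quotient — whose bracketed ratio tends to $1$ because numerator and denominator differ only by consecutive distances — contributes $\kappa_i\,\psi(\epsilon)$. Since every $\kappa_i\le k<1$, the limiting inequality is $\psi(\epsilon)\le k\,\psi(\epsilon)$, so $\psi(\epsilon)=0$ and hence $\epsilon=0$, a contradiction. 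I expect this limiting computation to be the main obstacle: one must verify that each compound quotient in \eqref{a} really collapses to a multiple of $\psi(\epsilon)$ and not to something strictly larger, which requires showing the denominators stay bounded away from $0$ and handling the shifted parameter $\lambda+\nu(\lambda)$ via the behaviour of $\omega^{G}$ in its time variable.

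With $\{x_n\}$ Cauchy, completeness yields a limit $x^{\ast}$ with $\omega^{G}_{\lambda}(x_n,x^{\ast},x^{\ast})\to 0$, and the stated hypothesis on non-decreasing convergent sequences gives $x_n\preceq x^{\ast}$, so \eqref{a} applies to the pair $(x_n,x^{\ast})$. Letting $n\to\infty$ there, the left-hand side tends to $\psi(\omega^{G}_{\lambda}(x^{\ast},Tx^{\ast},Tx^{\ast}))$, every argument built from $x_n$ or $Tx_n=x_{n+1}$ relaxes to a quantity governed by $\omega^{G}_{\lambda}(x^{\ast},x^{\ast},x^{\ast})=0$ or by $\omega^{G}_{\lambda}(x^{\ast},Tx^{\ast},Tx^{\ast})$ itself, and the bound $k<1$ again yields $\psi(\omega^{G}_{\lambda}(x^{\ast},Tx^{\ast},Tx^{\ast}))\le k\,\psi(\omega^{G}_{\lambda}(x^{\ast},Tx^{\ast},Tx^{\ast}))$; hence $\omega^{G}_{\lambda}(x^{\ast},Tx^{\ast},Tx^{\ast})=0$ for all $\lambda$, and $Tx^{\ast}=x^{\ast}$ by Proposition~\ref{2.2}(1).

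Finally, for uniqueness I would invoke hypothesis $(2)$. Given fixed points $x^{\ast}$ and $y^{\ast}$, choose $z$ comparable to both with $z\preceq Tz$ and $\omega^{G}_{\lambda}(z,Tz,Tz)$ finite for all $\lambda$. By monotonicity the orbit $\{T^nz\}$ stays comparable to $x^{\ast}$ and to $y^{\ast}$, so applying \eqref{a} to the pairs $(T^nz,x^{\ast})$ and $(T^nz,y^{\ast})$ and again using the factor $k<1$ drives $\omega^{G}_{\lambda}(T^nz,x^{\ast},x^{\ast})\to 0$ and $\omega^{G}_{\lambda}(T^nz,y^{\ast},y^{\ast})\to 0$. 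The sub-additivity of $\psi$ together with the triangle-type estimate of Proposition~\ref{2.2}(4) then forces $\omega^{G}_{\lambda}(x^{\ast},y^{\ast},y^{\ast})=0$ for every $\lambda$, whence $x^{\ast}=y^{\ast}$ by Proposition~\ref{2.2}(1). This completes the proposed proof.
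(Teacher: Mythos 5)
Your overall architecture coincides with the paper's: run the Picard orbit $x_n=T^nx_0$, prove it is modular G-Cauchy by contradiction via a minimal-index subsequence, pass to the limit to obtain a fixed point, and prove uniqueness by driving the orbit of a comparable $z$ to both candidate fixed points. (A cosmetic difference: you use the hypothesis of asymptotic $T$-regularity directly, whereas the paper re-derives it from \eqref{a} in its step (b).) Two steps in your plan, however, do not close as written. The decisive one is the quantitative conclusion of the Cauchy argument. After the consecutive-distance terms vanish, the surviving contributions in \eqref{a} are not one but several: at least the terms with coefficients $\kappa_1$, $\kappa_4$, $\kappa_5$, $\kappa_9$, $\kappa_{10}$, $\kappa_{11}$ have $\psi$-arguments that tend to $\psi(\epsilon)$ (or to a quotient of order $\epsilon$), so the limiting inequality has the form $\psi(\epsilon)\le\bigl(\sum_{i\in S}\kappa_i\bigr)\psi(\epsilon)$ with $\lvert S\rvert\ge 4$. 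The hypothesis $\max_i\sup_{t\ge 0}\kappa_i(t)<1$ bounds each summand but not the sum, so your asserted bound $\psi(\epsilon)\le k\,\psi(\epsilon)$ with a single $k<1$ does not follow, and the same defect recurs in your fixed-point step, where the paper instead moves the terms containing $\psi(\omega^{G}_{\lambda}(x^{\ast},Tx^{\ast},Tx^{\ast}))$ to the left before dividing. (To be fair, the paper's own proof needs the sum of the relevant suprema to be at most $1$ for its quantities $\rho$ and $\delta$ to be $\le 1$, which the stated hypothesis also does not guarantee; but a reviewed proof must at least match the paper's bookkeeping of which terms go to which side.)

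The second gap is one you flag yourself but leave open, and it is not a routine verification: condition (5) of Definition \ref{2.1} changes the parameter, $\omega^{G}_{\lambda+\mu}(x,y,z)\le\omega^{G}_{\lambda}(x,a,a)+\omega^{G}_{\mu}(a,y,z)$, so splitting $\omega^{G}_{\lambda}(x_{n_k},x_{m_k},x_{m_k})$ through $x_{m_k-1}$ necessarily produces modulars at parameters other than $\lambda$ on the right. The claim that $\omega^{G}_{\lambda}(x_{n_k},x_{m_k},x_{m_k})\to\epsilon$ therefore requires an explicit distribution of the parameter together with the monotonicity $\omega^{G}_{\lambda+\mu}(a,y,z)\le\omega^{G}_{\mu}(a,y,z)$, and a separate argument that the denominators of the compound quotients stay bounded away from zero. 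Until that is carried out, the contradiction on which the whole Cauchy step rests is an unproven assertion rather than a computation.
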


\begin{proof} %
(a)
Let $x_{0}\in X_{\omega^{G}}$ be such %
that $x_{0}\preceq Tx_{0}$ and let %
$x_{n}=Tx_{n-1}=T^{n}x_{0}$ for all %
$n\in \mathbb{N}.$ Regarding that $T$ is %
non-decreasing mapping, we have that %
$x_{0}\preceq Tx_{0}=x_{1}$, %
which implies that $x_{1}=Tx_{0} %
\preceq Tx_{1}=x_{2}$. Inductively, %
we have %
\begin{equation}\label{1111}%
x_{0}\preceq x_{1}\preceq x_{2}\preceq \cdots \preceq x_{n-1} %
\preceq x_{n}\preceq x_{n+1}\preceq \cdots. %
\end{equation}%
	
Assume that there exists $n_{0}\in\mathbb{N}$ %
such that $x_{n_{0}}=x_{n_{0}+1}$. Since %
$x_{n_{0}}=x_{n_{0}+1}=Tx_{n_{0}}$, then %
$x_{n_{0}}$ is the fixed point of $T$. %
Now suppose that $x_{n} \precneqq x_{n+1}$ for %
all $n\in\mathbb{N}$, thus  by inequality%
\ref{1111}, we have that %
\begin{equation}\label{22}%
x_{0}\prec x_{1}\prec x_{2}\prec \cdots %
\prec x_{n-1}\prec x_{n}\prec x_{n+1}\prec \cdots. %
\end{equation}%
Now for each $\lambda>0,$ and $x_{0}\prec %
Tx_{0}$ for all $n\in\mathbb{N}$ %
implies that $\omega^{G}_{\lambda}(x_{0},Tx_{0},Tx_{0})>0.$ %
Again, let $x_{0}\in X_{\omega^{G}}$  such that %
$\omega^{G}_{\lambda}(x_{0},Tx_{0},Tx_{0}) %
<\infty~\forall~\lambda>0$.  %
\newline
(b)	
Now, we show  that for all $n\in \mathbb{N}$, the sequence %
$\omega^{G}_{\lambda}(T^{n}x_{0},T^{n+1}x_{0},T^{n+1}x_{0})= 0$ %
for all $\lambda>0$, as  $n\rightarrow\infty$. This means that
the sequence $\{T^{n}x_{0}\}_{n\ge 1}$ is asymptotically $T$-regular %
on some point $x_{0}\in X_{\omega^{G}}$. Assume that, %
for each $n\in\mathbb{N}$, there exists $\lambda_{n}>0$ such that %
$\omega^{G}_{\lambda_{n}}(T^{n}x_{0},T^{n+1}x_{0},T^{n+1}x_{0}) %
\neq 0$.  Otherwise we are done. %
For each $n\ge 1$, if $0<\lambda<\lambda_{n}$, then we have  %
$\omega^{G}_{\lambda}(T^{n}x_{0},T^{n}x_{0},T^{n+1}x_{0})\neq 0$. %
Since $T^{n}x_{0}\preceq T^{n+1}x_{0}$, we have from condition (1) %
of \autoref{3.1},then we have that $\psi(\omega^{G}_{\lambda_{n}}(T^{n}x_{0}, %
T^{n+1}x_{0},T^{n+1}x_{0}))\le \psi(\omega^{G}_{\lambda}(T^{n}x_{0}, %
T^{n+1}x_{0},T^{n+1}x_{0})) %
=\psi(\omega^{G}_{\lambda}(TT^{n-1}x_{0},TT^{n}x_{0},TT^{n}x_{0}))$. %
Take $x=T^{n-1}x_{0}$  and $ y=T^{n}x_{0}$, assume that we are taking  %
$d_{n}=\psi(\omega^{G}_{\lambda} %
(T^{n}x_{0},T^{n+1}x_{0},T^{n+1}x_{0}))$, %
hence we have that %
\begin{align}\label{b}%
d_{n}\notag %
\le& %
\kappa_{1}(\psi(\omega^{G}_{\lambda} %
(T^{n-1}x_{0},T^{n}x_{0},T^{n}x_{0}))) %
\psi(\omega^{G}_{\lambda+\nu(\lambda)} %
(T^{n-1}x_{0},T^{n}x_{0},T^{n}x_{0}))\\\notag %
+&\kappa_{2}(\psi(\omega^{G}_{\lambda} %
(T^{n-1}x_{0},T^{n}x_{0},T^{n}x_{0}))) %
\psi(\omega^{G}_{\lambda} %
(T^{n-1}x_{0},TT^{n-1}x_{0},TT^{n-1}x_{0})) \\\notag %
+&\kappa_{3}(\psi(\omega^{G}_{\lambda} %
(T^{n-1}x_{0},T^{n}x_{0},T^{n}x_{0}))) %
\psi(\omega^{G}_{\lambda} %
(T^{n}x_{0},TT^{n}x_{0},TT^{n}x_{0}))\\ \notag %
+&\kappa_{4}(\psi(\omega^{G}_{\lambda} %
(T^{n-1}x_{0},T^{n}x_{0},T^{n}x_{0}))) %
\psi(\omega^{G}_{\lambda} %
(TT^{n-1}x_{0},TT^{n}x_{0},TT^{n}x_{0}))\\ \notag %
+&\kappa_{5}(\psi(\omega^{G}_{\lambda} %
(T^{n-1}x_{0},T^{n}x_{0},T^{n}x_{0}))) %
\psi(\omega^{G}_{\lambda} %
(TT^{n-1}x_{0},T^{n}x_{0},T^{n}x_{0}))\\ \notag %
+&\kappa_{6}(\psi(\omega^{G}_{\lambda} %
(T^{n-1}x_{0},T^{n}x_{0},T^{n}x_{0})))\\\notag %
\times&\psi\Set{\frac{\omega^{G}_{\lambda} %
(T^{n}x_{0},TT^{n}x_{0},TT^{n}x_{0}) %
[1+\omega^{G}_{\lambda} %
(T^{n-1}x_{0},T^{n}x_{0},T^{n}x_{0})]} %
{1+\omega^{G}_{\lambda} %
(T^{n-1}x_{0},TT^{n-1}x_{0},TT^{n-1}x_{0})}}\\ \notag %
+&\kappa_{7}(\psi(\omega^{G}_{\lambda} %
(T^{n-1}x_{0},T^{n}x_{0},T^{n}x_{0})))\\\notag %
\times&\psi\Set{\frac{\omega^{G}_{\lambda} %
(T^{n}x_{0},TT^{n}x_{0},TT^{n}x_{0}) %
[1+\omega^{G}_{\lambda} %
(T^{n-1}x_{0},TT^{n-1}x_{0},TT^{n-1}x_{0})]} %
{1+\omega^{G}_{\lambda} %
(T^{n-1}x_{0},T^{n}x_{0},T^{n}x_{0})}}\\\notag %
+&\kappa_{8}(\psi(\omega^{G}_{\lambda} %
(T^{n-1}x_{0},T^{n}x_{0},T^{n}x_{0})))\\\notag %
\times&\psi\Set{\frac{\omega^{G}_{\lambda} %
(T^{n-1}x_{0},TT^{n-1}x_{0},TT^{n-1}x_{0}) %
[1+\omega^{G}_{\lambda} %
(T^{n-1}x_{0},T^{n}x_{0},T^{n}x_{0})]} %
{1+\omega^{G}_{\lambda} %
(T^{n}x_{0},TT^{n-1}x_{0},TT^{n-1}x_{0}) %
+\omega^{G}_{\lambda} %
(T^{n-1}x_{0},T^{n}x_{0},T^{n}x_{0})}}\\ \notag %
+&\kappa_{9}(\psi(\omega^{G}_{\lambda} %
(T^{n-1}x_{0},T^{n}x_{0},T^{n}x_{0})))\\\notag %
\times&\psi\Set{\frac{\omega^{G}_{\lambda} %
(T^{n-1}x_{0},T^{n}x_{0},T^{n}x_{0}) %
[1+\omega^{G}_{\lambda} %
(T^{n}x_{0},TT^{n}x_{0},TT^{n}x_{0})]} %
{1+\omega^{G}_{\lambda} %
(TT^{n-1}x_{0},TT^{n-1}x_{0},T^{n}x_{0}) %
+\omega^{G}_{\lambda} %
(TT^{n-1}x_{0},TT^{n}x_{0},TT^{n}x_{0})}}\\ \notag %
+&\kappa_{10}(\psi(\omega^{G}_{\lambda} %
(T^{n-1}x_{0},T^{n}x_{0},T^{n}x_{0})))\\\notag %
\times&\psi\Set{\frac{\omega^{G}_{\lambda} %
(TT^{n-1}x_{0},TT^{n}x_{0},TT^{n}x_{0}) %
[1+\omega^{G}_{\lambda} %
(T^{n-1}x_{0},TT^{n}x_{0},TT^{n}x_{0})]} %
{1+\omega^{G}_{\lambda} %
(TT^{n-1}x_{0},TT^{n-1}x_{0},T^{n}x_{0}) %
+\omega^{G}_{\lambda} %
(T^{n-1}x_{0},TT^{n}x_{0},TT^{n}x_{0})}} \\\notag	 %
+&\kappa_{11}(\psi(\omega^{G}_{\lambda} %
(T^{n-1}x_{0},T^{n}x_{0},T^{n}x_{0}))) %
\times\notag
\end{align}%
\begin{equation}%
\psi\Set{\frac{\omega^{G}_{\lambda} %
(TT^{n-1}x_{0},TT^{n}x_{0},TT^{n}x_{0}) %
[1+\omega^{G}_{\lambda} %
(T^{n}x_{0},TT^{n}x_{0},TT^{n}x_{0}) %
+\omega^{G}_{\lambda} %
(TT^{n-1}x_{0},TT^{n-1}x_{0},T^{n}x_{0})]} %
{1+\omega^{G}_{\lambda} %
(TT^{n-1}x_{0},TT^{n-1}x_{0},T^{n}x_{0}) %
+\omega^{G}_{\lambda} %
(TT^{n-1}x_{0},TT^{n}x_{0},TT^{n}x_{0})}}. %
\end{equation}%

 We get, %

\begin{align}\label{c}%
d_{n}\notag %
\le& %
\kappa_{1}\big(\psi\big(\omega^{G}_{\lambda} %
\big(T^{n-1}x_{0},T^{n}x_{0},T^{n}x_{0}\big)\big)\big) %
\psi\big(\omega^{G}_{\lambda+\nu(\lambda)} %
\big(T^{n-1}x_{0},T^{n}x_{0},T^{n}x_{0}))\\ \notag %
+&\kappa_{2}\big(\psi\big(\omega^{G}_{\lambda} %
\big(T^{n-1}x_{0},T^{n}x_{0},T^{n}x_{0}))) %
\psi\big(\omega^{G}_{\lambda} %
\big(T^{n-1}x_{0},T^{n}x_{0},T^{n}x_{0}))\\ \notag %
+&\kappa_{3}\big(\psi\big(\omega^{G}_{\lambda} %
\big(T^{n-1}x_{0},T^{n}x_{0},T^{n}x_{0}))) %
\psi\big(\omega^{G}_{\lambda} %
\big(T^{n}x_{0},T^{n+1}x_{0},T^{n+1}x_{0}))\\ \notag %
+&\kappa_{4}\big(\psi\big(\omega^{G}_{\lambda} %
\big(T^{n-1}x_{0},T^{n}x_{0},T^{n}x_{0}))) %
\psi\big(\omega^{G}_{\lambda} %
\big(T^{n}x_{0},T^{n+1}x_{0},T^{n+1}x_{0}))\\ \notag %
+&\kappa_{5}\big(\psi\big(\omega^{G}_{\lambda} %
\big(T^{n-1}x_{0},T^{n}x_{0},T^{n}x_{0}))) %
\psi\big(\omega^{G}_{\lambda} %
\big(T^{n}x_{0},T^{n}x_{0},T^{n}x_{0}))\\\notag %
+&\kappa_{6}\big(\psi\big(\omega^{G}_{\lambda} %
\big(T^{n-1}x_{0},T^{n}x_{0},T^{n}x_{0})))\\\notag %
\times&\psi\Set{\frac{\omega^{G}_{\lambda} %
\big(T^{n}x_{0},T^{n+1}x_{0},T^{n+1}x_{0}) %
\big[1+\omega^{G}_{\lambda} %
\big(T^{n-1}x_{0},T^{n}x_{0},T^{n}x_{0})]} %
{1+\omega^{G}_{\lambda} %
\big(T^{n-1}x_{0},T^{n}x_{0},T^{n}x_{0})}}\\ \notag %
+&\kappa_{7}\big(\psi\big(\omega^{G}_{\lambda} %
\big(T^{n-1}x_{0},T^{n}x_{0},T^{n}x_{0})))\\\notag %
\times&\psi\Set{\frac{\omega^{G}_{\lambda} %
\big(T^{n}x_{0},T^{n+1}x_{0},T^{n+1}x_{0}) %
\big[1+\omega^{G}_{\lambda} %
\big(T^{n-1}x_{0},T^{n}x_{0},T^{n}x_{0})]} %
{1+\omega^{G}_{\lambda} %
\big(T^{n-1}x_{0},T^{n}x_{0},T^{n}x_{0})}}\\\notag %
+&\kappa_{8}\big(\psi\big(\omega^{G}_{\lambda} %
\big(T^{n-1}x_{0},T^{n}x_{0},T^{n}x_{0})))\\\notag %
\times&\psi\Set{\frac{\omega^{G}_{\lambda} %
\big(T^{n-1}x_{0},T^{n}x_{0},T^{n}x_{0}) %
\big[1+\omega^{G}_{\lambda} %
\big(T^{n-1}x_{0},T^{n}x_{0},T^{n}x_{0})]} %
{1+\omega^{G}_{\lambda} %
\big(T^{n}x_{0},T^{n}x_{0},T^{n}x_{0}) %
+\omega^{G}_{\lambda} %
\big(T^{n-1}x_{0},T^{n}x_{0},T^{n}x_{0})}}\\ \notag %
+&\kappa_{9}\big(\psi\big(\omega^{G}_{\lambda} %
\big(T^{n-1}x_{0},T^{n}x_{0},T^{n}x_{0})))\\\notag %
\times&\psi\Set{\frac{\omega^{G}_{\lambda} %
\big(T^{n-1}x_{0},T^{n}x_{0},T^{n}x_{0}) %
\big[1+\omega^{G}_{\lambda} %
\big(T^{n}x_{0},T^{n+1}x_{0},T^{n+1}x_{0})]} %
{1+\omega^{G}_{\lambda}\big(T^{n}x_{0},T^{n}x_{0},T^{n}x_{0}) %
+\omega^{G}_{\lambda} %
\big(T^{n}x_{0},T^{n+1}x_{0},T^{n+1}x_{0})}}\\ \notag %
+&\kappa_{10}\big(\psi\big(\omega^{G}_{\lambda} %
\big(T^{n-1}x_{0},T^{n}x_{0},T^{n}x_{0})))\\\notag %
\times&\psi\Set{\frac{\omega^{G}_{\lambda} %
\big(T^{n}x_{0},T^{n+1}x_{0},T^{n+1}x_{0}) %
\big[1+\omega^{G}_{\lambda} %
\big(T^{n-1}x_{0},T^{n+1}x_{0},T^{n+1}x_{0})]} %
{1+\omega^{G}_{\lambda} %
\big(T^{n}x_{0},T^{n}x_{0},T^{n}x_{0}) %
+\omega^{G}_{\lambda} %
\big(T^{n-1}x_{0},T^{n+1}x_{0},T^{n+1}x_{0})}}\\ \notag %
+&\kappa_{11}\big(\psi\big(\omega^{G}_{\lambda} %
\big(T^{n-1}x_{0},T^{n}x_{0},T^{n}x_{0})))\notag %
\end{align}%
\begin{equation}%
\qquad\times\psi\Set{\frac{\omega^{G}_{\lambda} %
\big(T^{n}x_{0},T^{n+1}x_{0},T^{n+1}x_{0}) %
\big[1+\omega^{G}_{\lambda} %
\big(T^{n}x_{0},T^{n+1}x_{0},T^{n+1}x_{0}) %
+\omega^{G}_{\lambda} %
\big(T^{n}x_{0},T^{n}x_{0},T^{n}x_{0})]} %
{1+\omega^{G}_{\lambda} %
\big(T^{n}x_{0},T^{n}x_{0},T^{n}x_{0}) %
+\omega^{G}_{\lambda} %
\big(T^{n}x_{0},T^{n+1}x_{0},T^{n+1}x_{0})}}. %
\end{equation}%

So applying property 1 of Definition \ref{2.1}, we have %

\begin{align}%
\psi(\omega^{G}_{\lambda} %
(T^{n}x_{0}, T^{n+1}x_{0},T^{n+1}x_{0}))\notag %
\le& %
\kappa_{1}(\psi(\omega^{G}_{\lambda} %
(T^{n-1}x_{0}, T^{n}x_{0},T^{n}x_{0})))\\\notag %
\times&\psi(\omega^{G}_{\lambda+\nu(\lambda)} %
(T^{n-1}x_{0}, T^{n}x_{0},T^{n}x_{0}))\\\notag %
+&\kappa_{2}(\psi(\omega^{G}_{\lambda} %
(T^{n-1}x_{0},T^{n}x_{0},T^{n}x_{0})))\\\notag %
\times&\psi(\omega^{G}_{\lambda} %
(T^{n-1}x_{0}, T^{n}x_{0},T^{n}x_{0}))\\\notag %
+&\kappa_{3}(\psi(\omega^{G}_{\lambda} %
(T^{n-1}x_{0},T^{n}x_{0},T^{n}x_{0})))\\\notag %
\times&\psi(\omega^{G}_{\lambda} %
(T^{n}x_{0}, T^{n+1}x_{0},T^{n+1}x_{0}))\\\notag %
+&\kappa_{4}(\psi(\omega^{G}_{\lambda} %
(T^{n-1}x_{0},T^{n}x_{0},T^{n}x_{0})))\\\notag %
\times&\psi(\omega^{G}_{\lambda} %
(T^{n}x_{0},T^{n+1}x_{0},T^{n+1}x_{0}))\\\notag %
+&\kappa_{6}(\psi(\omega^{G}_{\lambda} %
(T^{n-1}x_{0},T^{n}x_{0},T^{n}x_{0})))\\\notag %
\times&\psi(\omega^{G}_{\lambda} %
(T^{n}x_{0}, T^{n+1}x_{0},T^{n+1}x_{0}))\\\notag %
+&\kappa_{7}(\psi(\omega^{G}_{\lambda} %
(T^{n-1}x_{0},T^{n}x_{0},T^{n}x_{0})))\\\notag %
\times&\psi(\omega^{G}_{\lambda} %
(T^{n}x_{0}, T^{n+1}x_{0},T^{n+1}x_{0}))\\\notag %
+&\kappa_{8}(\psi(\omega^{G}_{\lambda} %
(T^{n-1}x_{0},T^{n}x_{0},T^{n}x_{0})))\\\notag %
\times&\psi(\omega^{G}_{\lambda} %
(T^{n-1}x_{0}, T^{n}x_{0},T^{n}x_{0}))\\\notag %
+&\kappa_{9}(\psi(\omega^{G}_{\lambda} %
(T^{n-1}x_{0},T^{n}x_{0},T^{n}x_{0})))\\\notag %
\times&\psi(\omega^{G}_{\lambda} %
(T^{n-1}x_{0}, T^{n}x_{0},T^{n}x_{0}))\\\notag %
+&\kappa_{10}(\psi(\omega^{G}_{\lambda} %
(T^{n-1}x_{0},T^{n}x_{0},T^{n}x_{0})))\\\notag %
\times&\psi(\omega^{G}_{\lambda} %
(T^{n}x_{0}, T^{n+1}x_{0},T^{n+1}x_{0}))\\\notag %
+&\kappa_{11}(\psi(\omega^{G}_{\lambda} %
(T^{n-1}x_{0},T^{n}x_{0},T^{n}x_{0})))\\ %
\times&\psi(\omega^{G}_{\lambda} %
(T^{n}x_{0}, T^{n+1}x_{0},T^{n+1}x_{0})), %
\end{align}%

so that %

\begin{align}\label{3.7}%
\psi(\omega^{G}_{\lambda} %
(T^{n}x_{0}, T^{n+1}x_{0},T^{n+1}x_{0}))\notag %
\le& %
\kappa_{1}(\psi(\omega^{G}_{\lambda} %
(T^{n-1}x_{0}, T^{n}x_{0},T^{n}x_{0})))\\\notag %
\times&\psi(\omega^{G}_{\lambda} %
(T^{n-1}x_{0}, T^{n}x_{0},T^{n}x_{0}))\\\notag %
+&\kappa_{2}(\psi(\omega^{G}_{\lambda} %
(T^{n-1}x_{0},T^{n}x_{0},T^{n}x_{0})))\\\notag %
\times&\psi(\omega^{G}_{\lambda} %
(T^{n-1}x_{0}, T^{n}x_{0},T^{n}x_{0}))\\\notag %
+&\kappa_{3}(\psi(\omega^{G}_{\lambda} %
(T^{n-1}x_{0},T^{n}x_{0},T^{n}x_{0})))\\\notag %
\times&\psi(\omega^{G}_{\lambda} %
(T^{n}x_{0}, T^{n+1}x_{0},T^{n+1}x_{0}))\\\notag %
+&\kappa_{4}(\psi(\omega^{G}_{\lambda} %
(T^{n-1}x_{0},T^{n}x_{0},T^{n}x_{0})))\\\notag %
\times&\psi(\omega^{G}_{\lambda} %
(T^{n}x_{0},T^{n+1}x_{0},T^{n+1}x_{0}))\\\notag %
+&\kappa_{6}(\psi(\omega^{G}_{\lambda} %
(T^{n-1}x_{0},T^{n}x_{0},T^{n}x_{0})))\\\notag %
\times&\psi(\omega^{G}_{\lambda} %
(T^{n}x_{0}, T^{n+1}x_{0},T^{n+1}x_{0}))\\\notag %
+&\kappa_{7}(\psi(\omega^{G}_{\lambda} %
(T^{n-1}x_{0},T^{n}x_{0},T^{n}x_{0})))\\\notag %
\times&\psi(\omega^{G}_{\lambda} %
(T^{n}x_{0}, T^{n+1}x_{0},T^{n+1}x_{0}))\\\notag %
+&\kappa_{8}(\psi(\omega^{G}_{\lambda} %
(T^{n-1}x_{0},T^{n}x_{0},T^{n}x_{0})))\\\notag %
\times&\psi(\omega^{G}_{\lambda} %
(T^{n-1}x_{0}, T^{n}x_{0},T^{n}x_{0}))\\\notag %
+&\kappa_{9}(\psi(\omega^{G}_{\lambda} %
(T^{n-1}x_{0},T^{n}x_{0},T^{n}x_{0})))\\\notag %
\times&\psi(\omega^{G}_{\lambda} %
(T^{n-1}x_{0}, T^{n}x_{0},T^{n}x_{0}))\\\notag %
+&\kappa_{10}(\psi(\omega^{G}_{\lambda} %
(T^{n-1}x_{0},T^{n}x_{0},T^{n}x_{0})))\\\notag %
\times&\psi(\omega^{G}_{\lambda} %
(T^{n}x_{0}, T^{n+1}x_{0},T^{n+1}x_{0}))\\\notag %
+&\kappa_{11}(\psi(\omega^{G}_{\lambda} %
(T^{n-1}x_{0},T^{n}x_{0},T^{n}x_{0})))\\ %
\times&\psi(\omega^{G}_{\lambda} %
(T^{n}x_{0}, T^{n+1}x_{0},T^{n+1}x_{0})). %
\end{align}%
		
Thus, %

\begin{eqnarray}\notag %
&&\bigg(1-\bigg(\kappa_{3}(\psi(\omega^{G}_{\lambda} %
(T^{n-1}x_{0},T^{n}x_{0},T^{n}x_{0})))
+\kappa_{4}(\psi(\omega^{G}_{\lambda} %
(T^{n-1}x_{0},T^{n}x_{0},T^{n}x_{0})))\\\notag %
&&+\kappa_{6}(\psi(\omega^{G}_{\lambda} %
(T^{n-1}x_{0},T^{n}x_{0},T^{n}x_{0})))\notag %
+\kappa_{7}(\psi(\omega^{G}_{\lambda} %
(T^{n-1}x_{0},T^{n}x_{0},T^{n}x_{0})))\\\notag %
&&+\kappa_{10}(\psi(\omega^{G}_{\lambda} %
(T^{n-1}x_{0},T^{n}x_{0},T^{n}x_{0}))) %
+\kappa_{11}(\psi(\omega^{G}_{\lambda} %
(T^{n-1}x_{0},T^{n}x_{0},T^{n}x_{0})))\bigg) %
\bigg)\\\notag%
&&\times\psi(\omega^{G}_{\lambda} %
(T^{n}x_{0}, T^{n+1}x_{0},T^{n+1}x_{0}))\\\notag %
&&\le\notag %
\bigg(\kappa_{1}(\psi(\omega^{G}_{\lambda} %
(T^{n-1}x_{0},T^{n}x_{0},T^{n}x_{0})))\notag %
+\kappa_{2}(\psi(\omega^{G}_{\lambda} %
(T^{n-1}x_{0},T^{n}x_{0},T^{n}x_{0})))\\\notag %
&&+\kappa_{8}(\psi(\omega^{G}_{\lambda} %
(T^{n-1}x_{0},T^{n}x_{0},T^{n}x_{0}))) %
+\kappa_{9}(\psi(\omega^{G}_{\lambda} %
(T^{n-1}x_{0},T^{n}x_{0},T^{n}x_{0}))) %
\bigg)\\&&\times\psi(\omega^{G}_{\lambda} %
(T^{n-1}x_{0}, T^{n}x_{0},T^{n}x_{0})). %
\end{eqnarray}%

Therefore, %

\begin{align}\label{3.9}%
\psi(\omega^{G}_{\lambda} %
(T^{n}x_{0}, T^{n+1}x_{0},T^{n+1}x_{0}))\notag %
\le& \rho \psi(\omega^{G}_{\lambda} %
(T^{n-1}x_{0}, T^{n}x_{0},T^{n}x_{0}))\\\notag %
\le&\psi(\omega^{G}_{\lambda} %
(T^{n-1}x_{0}, T^{n}x_{0},T^{n}x_{0}))\\\notag %
\le&\psi(\omega^{G}_{\lambda} %
(T^{n-2}x_{0}, T^{n-1}x_{0},T^{n-1}x_{0}))\\\notag %
\vdots&\\\notag %
\le&\psi(\omega^{G}_{\lambda}(x_{0}, Tx_{0},Tx_{0}))\\ %
< &\infty, %
\end{align}%

where $\rho=\dfrac{\rho_{1}}{\rho_{2}},$ where, %

\begin{align}%
\rho_{1}:=\notag %
&1-\big(\kappa_{3}(\psi(\omega^{G}_{\lambda} %
(T^{n-1}x_{0},T^{n}x_{0},T^{n}x_{0})))\notag %
+\kappa_{4}(\psi(\omega^{G}_{\lambda} %
(T^{n-1}x_{0},T^{n}x_{0},T^{n}x_{0})))\\\notag %
&+\kappa_{6}(\psi(\omega^{G}_{\lambda} %
(T^{n-1}x_{0},T^{n}x_{0},T^{n}x_{0})))\notag %
+\kappa_{7}(\psi(\omega^{G}_{\lambda} %
(T^{n-1}x_{0},T^{n}x_{0},T^{n}x_{0})))\\\notag %
&+\kappa_{10}(\psi(\omega^{G}_{\lambda} %
(T^{n-1}x_{0},T^{n}x_{0},T^{n}x_{0}))) %
+\kappa_{11}(\psi(\omega^{G}_{\lambda} %
(T^{n-1}x_{0},T^{n}x_{0},T^{n}x_{0}))), %
\end{align}%

and

\begin{align}%
\rho_{2}:=\notag %
&\kappa_{1}(\psi(\omega^{G}_{\lambda} %
(T^{n-1}x_{0},T^{n}x_{0},T^{n}x_{0})))\notag %
+\kappa_{2}(\psi(\omega^{G}_{\lambda} %
(T^{n-1}x_{0},T^{n}x_{0},T^{n}x_{0})))\\\notag %
&+\kappa_{8}(\psi(\omega^{G}_{\lambda} %
(T^{n-1}x_{0},T^{n}x_{0},T^{n}x_{0}))) %
+\kappa_{9}(\psi(\omega^{G}_{\lambda} %
(T^{n-1}x_{0},T^{n}x_{0},T^{n}x_{0}))). %
\end{align}%

Hence $\{\psi(\omega_{\lambda} %
(T^{n}x_{0}, T^{n+1}x_{0},T^{n+1}x_{0}))\}_{n\ge 1}$ %
is non-increasing and bounded below for which the  %
sequence converges to some real number %
$\gamma\ge 0$ (say); that is $\gamma\in [0,\infty)$.  %
We claim that $\gamma=0$. %
Suppose not, then  $\gamma>0$, then  for any $\lambda>0$, %
using inequalities \ref{3.9} and \ref{3.7},  %
we can see clearly that %
\begin{align}%
\psi(\omega^{G}_{\lambda} %
(T^{n}x_{0}, T^{n+1}x_{0},T^{n+1}x_{0}))\notag %
&\le %
\bigg(\kappa_{1}\big(\psi\big( %
\omega^{G}_{\lambda}\big(T^{n-1}x_{0}, T^{n}x_{0}, %
T^{n}x_{0}\big)\big)\big)\notag %
+ \kappa_{2}\big(\psi\big(\omega^{G}_{\lambda} %
\big(T^{n-1}x_{0},T^{n}x_{0}, %
T^{n}x_{0}\big)\big)\big)\\\notag %
&+ \kappa_{3}\big(\psi\big(\omega^{G}_{\lambda} %
\big(T^{n-1}x_{0},T^{n}x_{0}, %
T^{n}x_{0}\big)\big)\big)\notag
+\kappa_{4} %
\big(\psi\big(\omega^{G}_{\lambda} %
\big(T^{n-1}x_{0},T^{n}x_{0},T^{n}x_{0}\big)\big)\big)\\\notag %
&+\kappa_{6}\big(\psi\big(\omega^{G}_{\lambda} %
\big(T^{n-1}x_{0},T^{n}x_{0},T^{n}x_{0}\big)\big)\big)\\\notag %
&+\sum_{k=7}^{11}\bigg(\kappa_{k}\big(\psi\big( %
\omega^{G}_{\lambda} %
\big(T^{n-1}x_{0},T^{n}x_{0},T^{n}x_{0}\big)\big)\big)\bigg)\bigg)\\ %
&\times\psi(\omega^{G}_{\lambda} %
(T^{n-1}x_{0},T^{n}x_{0},T^{n}x_{0})).\\\notag %
\end{align}%
So that %
\begin{align}
\psi(\omega_{\lambda}(T^{n}x_{0}, T^{n+1}x_{0},T^{n+1}x_{0})) %
&\le\notag %
\sum_{k=1}^{4}\kappa_{k}\big(\psi\big(\omega^{G}_{\lambda} %
\big(T^{n-1}x_{0},T^{n}x_{0},T^{n}x_{0}\big)\big)\big)\\\notag  %
&+\sum_{k=6}^{11}\kappa_{k}\big(\psi\big(\omega^{G}_{\lambda} %
\big(T^{n-1}x_{0},T^{n}x_{0},T^{n}x_{0}\big)\big)\big)\\ %
&\times\psi(\omega^{G}_{\lambda}(T^{n-1}x_{0},T^{n}x_{0},T^{n}x_{0})), %
\end{align}%

as $n\rightarrow\infty$, %

\begin{equation}%
1\le\liminf_{n\rightarrow\infty}\biggl(\sum_{k=1}^{4} %
\kappa_{k}\big(\psi\big(\omega^{G}_{\lambda} %
\big(T^{n-1}x_{0},T^{n}x_{0},T^{n}x_{0}\big)\big)\big)  %
+\sum_{k=6}^{11}\kappa_{k}\big(\psi\big(\omega^{G}_{\lambda} %
\big(T^{n-1}x_{0},T^{n}x_{0},T^{n}x_{0}\big)\big)\big)\biggr). %
\end{equation}%
	
So, we have that %

\begin{equation}%
\lim\limits_{n\rightarrow\infty} %
\psi(\omega^{G}_{\lambda} %
(T^{n-1}x_{0},T^{n}x_{0},T^{n}x_{0})) %
=0~\forall ~\lambda>0, %
\end{equation}	%

hence, %

\begin{equation}%
\lim\limits_{n\rightarrow\infty}\omega^{G}_{\lambda} %
(T^{n-1}x_{0},T^{n}x_{0},T^{n}x_{0})=0~~  %
\forall ~\lambda>0. %
\end{equation}%
	
This is a  contradiction to our claim. %
Hence, %

\begin{equation}%
\lim\limits_{n\rightarrow\infty} %
\psi(\omega^{G}_{\lambda} %
(T^{n}x_{0},T^{n+1}x_{0},T^{n+1}x_{0})) %
=0~ \forall ~\lambda>0, %
\end{equation}%

moreover, we have %

\begin{equation}\label{A}%
\lim\limits_{n\rightarrow\infty}\omega^{G}_{\lambda} %
(T^{n}x_{0},T^{n+1}x_{0},T^{n+1}x_{0})=0 ~\forall~\lambda>0. %
\end{equation}%

This shows that the sequence %
$\{T^{n}x_{0}\}_{n\in\mathbb{N}}$ is asymptotically $T$-regular
on some point $x_{0}\in X_{\omega^{G}}$. %
\newline
(c)	
We now show that $\{T^{n}x_{0}\}_{n\in\mathbb{N}}$ %
is a modular G-Cauchy sequence in $X_{\omega^{G}}$. %
By contradiction, suppose, if possible, that there %
 exists real number, %
$\epsilon>0$ and two modular G-sub-sequences %
$\{T^{m_{k}}x_{0}\}_{k\in\mathbb{N}}$ %
and $\{T^{n_{k}}x_{0}\}_{k\in\mathbb{N}}$ of the modular G-sequence %
$\{T^{n}x_{0}\}_{n\in\mathbb{N}}$ %
such that for $\lambda_{0}>0$ and $n_{k}>m_{k}>k$,  %
we have that $\omega^{G}_{\lambda_{0}} %
(T^{m_{k}}x_{0},T^{n_{k}}x_{0},T^{n_{k}}x_{0}) %
\ge6\epsilon$, but %
$\omega^{G}_{\lambda_{0}} %
(T^{m_{k}}x_{0},T^{n_{k}-1}x_{0},T^{n_{k}-1}x_{0}) %
<\epsilon$. %
Now, $T^{m_{k}}x_{0}\preceq T^{n_{k}}x_{0}$, %
we have $6\epsilon\le \omega^{G}_{\lambda_{0}} %
(T^{m_{k}}x_{0},T^{n_{k}}x_{0},T^{n_{k}}x_{0})$ %
which implies that  $6\psi(\epsilon)\le\psi( %
\omega^{G}_{\lambda_{0}} %
(T^{m_{k}}x_{0},T^{n_{k}}x_{0},T^{n_{k}}x_{0})) %
=\psi(\omega^{G}_{\lambda_{0}} %
(TT^{m_{k}-1}x_{0},TT^{n_{k}-1}x_{0},TT^{n_{k}-1}x_{0}))$. %
Set $x=T^{m_{k}-1}x_{0}$ and $y=T^{n_{k}-1}x_{0}$  %
into inequality \ref{a}, then we have %
\begin{align}\label{d}%
6\psi(\epsilon)\notag %
&\le %
\psi(\omega^{G}_{\lambda_{0}} %
(T^{m_{k}}x_{0},T^{n_{k}}x_{0},T^{n_{k}}x_{0}))\\\notag %
&\le %
\kappa_{1}(\psi(\omega^{G}_{\lambda_{0}} %
(T^{m_{k}-1}x_{0},T^{n_{k}-1}x_{0},T^{n_{k}-1}x_{0}))) %
\psi(\omega^{G}_{\lambda_{0}+\nu(\lambda_{0})} %
(T^{m_{k}-1}x_{0},T^{n_{k}-1}x_{0},T^{n_{k}-1}x_{0}))\\\notag %
&+\kappa_{2}(\psi(\omega^{G}_{\lambda_{0}} %
(T^{m_{k}-1}x_{0},T^{n_{k}-1}x_{0},T^{n_{k}-1}x_{0}))) %
\psi(\omega^{G}_{\lambda_{0}} %
(T^{m_{k}-1}x_{0},TT^{m_{k}-1}x_{0},TT^{m_{k}-1}x_{0}))\\ \notag %
&+\kappa_{3}(\psi(\omega^{G}_{\lambda_{0}} %
(T^{m_{k}-1}x_{0},T^{n_{k}-1}x_{0},T^{n_{k}-1}x_{0}))) %
\psi(\omega^{G}_{\lambda_{0}} %
(T^{n_{k}-1}x_{0},TT^{n_{k}-1}x_{0},TT^{n_{k}-1}x_{0}))\\\notag %
&+\kappa_{4}(\psi(\omega^{G}_{\lambda_{0}} %
(T^{m_{k}-1}x_{0},T^{n_{k}-1}x_{0},T^{n_{k}-1}x_{0}))) %
\psi(\omega^{G}_{\lambda_{0}} %
(TT^{m_{k}-1}x_{0},TT^{n_{k}-1}x_{0},TT^{n_{k}-1}x_{0}))\\ \notag %
&+\kappa_{5}(\psi(\omega^{G}_{\lambda_{0}} %
(T^{m_{k}-1}x_{0},T^{n_{k}-1}x_{0},T^{n_{k}-1}x_{0}))) %
\psi(\omega^{G}_{\lambda_{0}} %
(TT^{m_{k}-1}x_{0},T^{n_{k}-1}x_{0},T^{n_{k}-1}x_{0}))\\\notag %
&+\kappa_{6}(\psi(\omega^{G}_{\lambda_{0}} %
(T^{m_{k}-1}x_{0},T^{n_{k}-1}x_{0},T^{n_{k}-1}x_{0})))\\\notag %
&\times\psi\Set{\frac{\omega^{G}_{\lambda_{0}} %
(T^{n_{k}-1}x_{0},TT^{n_{k}-1}x_{0},TT^{n_{k}-1}x_{0}) %
[1+\omega^{G}_{\lambda_{0}} %
(T^{m_{k}-1}x_{0},T^{n_{k}-1}x_{0},T^{n_{k}-1}x_{0})]} %
{1+\omega^{G}_{\lambda_{0}} %
(T^{m_{k}-1}x_{0},TT^{m_{k}-1}x_{0},TT^{m_{k}-1}x_{0})}}\\ \notag %
&+\kappa_{7}(\psi(\omega^{G}_{\lambda_{0}} %
(T^{m_{k}-1}x_{0},T^{n_{k}-1}x_{0},T^{n_{k}-1}x_{0})))\\\notag %
&\times\psi\Set{\frac{\omega^{G}_{\lambda_{0}} %
(T^{n_{k}-1}x_{0},TT^{n_{k}-1}x_{0},TT^{n_{k}-1}x_{0}) %
[1+\omega^{G}_{\lambda_{0}} %
(T^{m_{k}-1}x_{0},TT^{m_{k}-1}x_{0},TT^{m_{k}-1}x_{0})]} %
{1+\omega^{G}_{\lambda_{0}} %
(T^{m_{k}-1}x_{0},T^{n_{k}-1}x_{0},T^{n_{k}-1}x_{0})}}\\ \notag %
&+\kappa_{8}(\psi(\omega^{G}_{\lambda_{0}} %
(T^{m_{k}-1}x_{0},T^{n_{k}-1}x_{0},T^{n_{k}-1}x_{0})))\\\notag %
&\times\psi\Set{\frac{\omega^{G}_{\lambda_{0}} %
(T^{m_{k}-1}x_{0},TT^{m_{k}-1}x_{0},TT^{m_{k}-1}x_{0}) %
[1+\omega^{G}_{\lambda_{0}} %
(T^{m_{k}-1}x_{0},T^{n_{k}-1}x_{0},T^{n_{k}-1}x_{0})]} %
{1+\omega^{G}_{\lambda_{0}} %
(T^{n_{k}-1}x_{0},TT^{m_{k}-1}x_{0},TT^{m_{k}-1}x_{0}) %
+\omega^{G}_{\lambda_{0}} %
(T^{m_{k}-1}x_{0},T^{n_{k}-1}x_{0},T^{n_{k}-1}x_{0})}}\\ \notag %
&+\kappa_{9}(\psi(\omega^{G}_{\lambda_{0}} %
(T^{m_{k}-1}x_{0},T^{n_{k}-1}x_{0},T^{n_{k}-1}x_{0})))\\\notag %
&\times\psi\Set{\frac{\omega^{G}_{\lambda_{0}} %
(T^{m_{k}-1}x_{0},T^{n_{k}-1}x_{0},T^{n_{k}-1}x_{0}) %
[1+\omega^{G}_{\lambda_{0}} %
(T^{n_{k}-1}x_{0},TT^{n_{k}-1}x_{0},TT^{n_{k}-1}x_{0})]} %
{1+\omega^{G}_{\lambda_{0}} %
(TT^{m_{k}-1}x_{0},TT^{m_{k}-1}x_{0},T^{n_{k}-1}x_{0}) %
+\omega^{G}_{\lambda_{0}} %
(TT^{m_{k}-1}x_{0},TT^{n_{k}-1}x_{0},TT^{n_{k}-1}x_{0})}}\\ \notag %
&+\kappa_{10}(\psi(\omega^{G}_{\lambda_{o}} %
(T^{m_{k}-1}x_{0},T^{n_{k}-1}x_{0},T^{n_{k}-1}x_{0})))\\\notag %
&\times\psi\Set{\frac{\omega^{G}_{\lambda_{0}} %
(TT^{m_{k}-1}x_{0},TT^{n_{k}-1}x_{0},TT^{n_{k}-1}x_{0}) %
[1+\omega^{G}_{\lambda_{0}} %
(T^{m_{k}-1}x_{0},TT^{n_{k}-1}x_{0},TT^{n_{k}-1}x_{0})]} %
{1+\omega^{G}_{\lambda_{0}} %
(TT^{m_{k}-1}x_{0},TT^{m_{k}-1}x_{0},T^{n_{k}-1}x_{0}) %
+\omega^{G}_{\lambda_{0}} %
(T^{m_{k}-1}x_{0},TT^{n_{k}-1}x_{0},TT^{n_{k}-1}x_{0})}} \\ %
&+\kappa_{11}(\psi(\omega^{G}_{\lambda_{0}} %
(T^{m_{k}-1}x_{0},T^{n_{k}-1}x_{0},T^{n_{k}-1}x_{0}))) %
\end{align}
$ \times\psi\Set{\frac{\omega^{G}_{\lambda_{0}} %
(TT^{m_{k}-1}x_{0},TT^{n_{k}-1}x_{0},TT^{n_{k}-1}x_{0}) %
[1+\omega^{G}_{\lambda_{0}} %
(T^{n_{k}-1}x_{0},TT^{n_{k}-1}x_{0},TT^{n_{k}-1}x_{0}) %
+\omega^{G}_{\lambda_{0}}
(TT^{m_{k}-1}x_{0},TT^{m_{k}-1}x_{0},T^{n_{k}-1}x_{0})]} %
{1+\omega^{G}_{\lambda_{0}} %
(TT^{m_{k}-1}x_{0},TT^{m_{k}-1}x_{0},T^{n_{k}-1}x_{0}) %
+\omega^{G}_{\lambda_{0}} %
(TT^{m_{k}-1}x_{0},TT^{n_{k}-1}x_{0},TT^{n_{k}-1}x_{0})}}.$ %
So, we have that %
\begin{align}\label{}%
6\psi(\epsilon)\notag %
&\le %
\psi(\omega^{G}_{\lambda_{0}} %
(T^{m_{k}}x_{0},T^{n_{k}}x_{0},T^{n_{k}}x_{0}))\\\notag %
&\le%
\kappa_{1}(\psi(\omega^{G}_{\lambda_{0}} %
(T^{m_{k}-1}x_{0},T^{n_{k}-1}x_{0},T^{n_{k}-1}x_{0}))) %
\psi(\omega^{G}_{\lambda_{0}+\nu(\lambda_{0})} %
(T^{m_{k}-1}x_{0},T^{n_{k}-1}x_{0},T^{n_{k}-1}x_{0}))\\\notag %
&+\kappa_{2}(\psi(\omega^{G}_{\lambda_{0}} %
(T^{m_{k}-1}x_{0},T^{n_{k}-1}x_{0},T^{n_{k}-1}x_{0}))) %
\psi(\omega^{G}_{\lambda_{0}} %
(T^{m_{k}-1}x_{0},T^{m_{k}}x_{0},T^{m_{k}}x_{0}))\\ \notag %
&+ \kappa_{3}(\psi(\omega^{G}_{\lambda_{0}} %
(T^{m_{k}-1}x_{0},T^{n_{k}-1}x_{0},T^{n_{k}-1}x_{0}))) %
\psi(\omega^{G}_{\lambda_{0}}
(T^{n_{k}-1}x_{0},T^{n_{k}}x_{0},T^{n_{k}}x_{0}))\\\notag
&+\kappa_{4}(\psi(\omega^{G}_{\lambda_{0}}
(T^{m_{k}-1}x_{0},T^{n_{k}-1}x_{0},T^{n_{k}-1}x_{0})))
\psi(\omega^{G}_{\lambda_{0}} %
(T^{m_{k}}x_{0},T^{n_{k}}x_{0},T^{n_{k}}x_{0}))\\ \notag %
&+\kappa_{5}(\psi(\omega^{G}_{\lambda_{0}} %
(T^{m_{k}-1}x_{0},T^{n_{k}-1}x_{0},T^{n_{k}-1}x_{0}))) %
\psi(\omega^{G}_{\lambda_{0}} %
(T^{m_{k}}x_{0},T^{n_{k}-1}x_{0},T^{n_{k}-1}x_{0}))\\\notag %
&+\kappa_{6}(\psi(\omega^{G}_{\lambda_{0}} %
(T^{m_{k}-1}x_{0},T^{n_{k}-1}x_{0},T^{n_{k}-1}x_{0})))\\\notag %
&\times\psi\Set{\frac{\omega^{G}_{\lambda_{0}} %
(T^{n_{k}-1}x_{0},T^{n_{k}}x_{0},T^{n_{k}}x_{0}) %
[1+\omega^{G}_{\lambda_{0}} %
(T^{m_{k}-1}x_{0},T^{n_{k}-1}x_{0},T^{n_{k}-1}x_{0})]} %
{1+\omega^{G}_{\lambda_{0}} %
(T^{m_{k}-1}x_{0},T^{m_{k}}x_{0},T^{m_{k}}x_{0})}}\\ \notag %
&+\kappa_{7}(\psi(\omega^{G}_{\lambda_{0}} %
(T^{m_{k}-1}x_{0},T^{n_{k}-1}x_{0},T^{n_{k}-1}x_{0})))\\\notag %
&\times\psi\Set{\frac{\omega^{G}_{\lambda_{0}} %
(T^{n_{k}-1}x_{0},T^{n_{k}}x_{0},T^{n_{k}}x_{0}) %
[1+\omega^{G}_{\lambda_{0}} %
(T^{m_{k}-1}x_{0},T^{m_{k}}x_{0},T^{m_{k}}x_{0})]} %
{1+\omega^{G}_{\lambda_{0}} %
(T^{m_{k}-1}x_{0},T^{n_{k}-1}x_{0},T^{n_{k}-1}x_{0})}}\\ \notag %
&+\kappa_{8}(\psi(\omega^{G}_{\lambda_{0}} %
(T^{m_{k}-1}x_{0},T^{n_{k}-1}x_{0},T^{n_{k}-1}x_{0})))\\\notag %
&\times\psi\Set{\frac{\omega^{G}_{\lambda_{0}} %
(T^{m_{k}-1}x_{0},T^{m_{k}}x_{0},T^{m_{k}}x_{0}) %
[1+\omega^{G}_{\lambda_{0}} %
(T^{m_{k}-1}x_{0},T^{n_{k}-1}x_{0},T^{n_{k}-1}x_{0})]} %
{1+\omega^{G}_{\lambda_{0}} %
(T^{n_{k}-1}x_{0},T^{m_{k}}x_{0},T^{m_{k}}x_{0}) %
+\omega^{G}_{\lambda_{0}} %
(T^{m_{k}-1}x_{0},T^{n_{k}-1}x_{0},T^{n_{k}-1}x_{0})}}\\ \notag %
&+\kappa_{9}(\psi(\omega^{G}_{\lambda_{0}} %
(T^{m_{k}-1}x_{0},T^{n_{k}-1}x_{0},T^{n_{k}-1}x_{0})))\\\notag %
&\times\psi\Set{\frac{\omega^{G}_{\lambda_{0}} %
(T^{m_{k}-1}x_{0},T^{n_{k}-1}x_{0},T^{n_{k}-1}x_{0}) %
[1+\omega^{G}_{\lambda_{0}} %
(T^{n_{k}-1}x_{0},T^{n_{k}}x_{0},T^{n_{k}}x_{0})]} %
{1+\omega^{G}_{\lambda_{0}} %
(T^{m_{k}}x_{0},T^{m_{k}}x_{0},T^{n_{k}-1}x_{0}) %
+\omega^{G}_{\lambda_{0}} %
(T^{m_{k}}x_{0},T^{n_{k}}x_{0},T^{n_{k}}x_{0})}}\\ \notag %
&+\kappa_{10}(\psi(\omega^{G}_{\lambda_{0}} %
(T^{m_{k}-1}x_{0},T^{n_{k}-1}x_{0},T^{n_{k}-1}x_{0})))\\\notag %
&\times\psi\Set{\frac{\omega^{G}_{\lambda_{0}} %
(T^{m_{k}}x_{0},T^{n_{k}}x_{0},T^{n_{k}}x_{0}) %
[1+\omega^{G}_{\lambda_{0}} %
(T^{m_{k}-1}x_{0},T^{n_{k}}x_{0},T^{n_{k}}x_{0})]} %
{1+\omega^{G}_{\lambda_{0}} %
(T^{m_{k}}x_{0},T^{m_{k}}x_{0},T^{n_{k}-1}x_{0}) %
+\omega^{G}_{\lambda_{0}} %
(T^{m_{k}-1}x_{0},T^{n_{k}}x_{0},T^{n_{k}}x_{0})}} \\\notag %
&+\kappa_{11}(\psi(\omega^{G}_{\lambda_{0}} %
(T^{m_{k}-1}x_{0},T^{n_{k}-1}x_{0},T^{n_{k}-1}x_{0})))\notag %
\end{align}%
\begin{equation}\label{e}%
\times\psi\Set{\frac{\omega^{G}_{\lambda_{0}} %
(T^{m_{k}}x_{0},T^{n_{k}}x_{0},T^{n_{k}}x_{0}) %
[1+\omega^{G}_{\lambda_{0}} %
(T^{n_{k}-1}x_{0},T^{n_{k}}x_{0},T^{n_{k}}x_{0}) %
+\omega^{G}_{\lambda_{0}} %
(T^{m_{k}}x_{0},T^{m_{k}}x_{0},T^{n_{k}-1}x_{0})]} %
{1+\omega^{G}_{\lambda_{0}} %
(T^{m_{k}}x_{0},T^{m_{k}}x_{0},T^{n_{k}-1}x_{0}) %
+\omega^{G}_{\lambda_{0}} %
(T^{m_{k}}x_{0},T^{n_{k}}x_{0},T^{n_{k}}x_{0})}}. %
\end{equation}%

We rewrite inequality \ref{e} as %

\begin{align}\label{f}%
6\psi(\epsilon)\notag %
\le& %
\psi(\omega^{G}_{\lambda_{0}} %
(T^{m_{k}}x_{0},T^{n_{k}}x_{0},T^{n_{k}}x_{0}))\\\notag %
\le& %
\psi(\omega^{G}_{\lambda_{0}+\nu(\lambda_{0})} %
(T^{m_{k}-1}x_{0},T^{n_{k}-1}x_{0},T^{n_{k}-1}x_{0}))\\\notag %
+&\psi(\omega^{G}_{\lambda_{0}} %
(T^{m_{k}-1}x_{0},T^{m_{k}}x_{0},T^{m_{k}}x_{0}))\\ \notag %
+& \psi(\omega^{G}_{\lambda_{0}} %
(T^{n_{k}-1}x_{0},T^{n_{k}}x_{0},T^{n_{k}}x_{0}))\\\notag %
+&\psi(\omega^{G}_{\lambda_{0}} %
(T^{m_{k}}x_{0},T^{n_{k}}x_{0},T^{n_{k}}x_{0}))\\ \notag %
+&\psi(\omega^{G}_{\lambda_{0}} %
(T^{m_{k}}x_{0},T^{n_{k}-1}x_{0},T^{n_{k}-1}x_{0}))\\\notag %
+&\psi\Set{\frac{\omega^{G}_{\lambda_{0}} %
(T^{n_{k}-1}x_{0},T^{n_{k}}x_{0},T^{n_{k}}x_{0}) %
[1+\omega^{G}_{\lambda_{0}} %
(T^{m_{k}-1}x_{0},T^{n_{k}-1}x_{0},T^{n_{k}-1}x_{0})]} %
{1+\omega^{G}_{\lambda_{0}} %
(T^{m_{k}-1}x_{0},T^{m_{k}}x_{0},T^{m_{k}}x_{0})}}\\ \notag %
+&\psi\Set{\frac{\omega^{G}_{\lambda_{0}} %
(T^{n_{k}-1}x_{0},T^{n_{k}}x_{0},T^{n_{k}}x_{0}) %
[1+\omega^{G}_{\lambda_{0}} %
(T^{m_{k}-1}x_{0},T^{m_{k}}x_{0},T^{m_{k}}x_{0})]} %
{1+\omega^{G}_{\lambda_{0}} %
(T^{m_{k}-1}x_{0},T^{n_{k}-1}x_{0},T^{n_{k}-1}x_{0})}}\\ \notag %
+&\psi\Set{\frac{\omega^{G}_{\lambda_{0}} %
(T^{m_{k}-1}x_{0},T^{m_{k}}x_{0},T^{m_{k}}x_{0}) %
[1+\omega^{G}_{\lambda_{0}} %
(T^{m_{k}-1}x_{0},T^{n_{k}-1}x_{0},T^{n_{k}-1}x_{0})]} %
{1+\omega^{G}_{\lambda_{0}} %
(T^{n_{k}-1}x_{0},T^{m_{k}}x_{0},T^{m_{k}}x_{0}) %
+\omega^{G}_{\lambda_{0}} %
(T^{m_{k}-1}x_{0},T^{n_{k}-1}x_{0},T^{n_{k}-1}x_{0})}}\\ \notag %
+&\psi\Set{\frac{\omega^{G}_{\lambda_{0}} %
(T^{m_{k}-1}x_{0},T^{n_{k}-1}x_{0},T^{n_{k}-1}x_{0}) %
[1+\omega^{G}_{\lambda_{0}} %
(T^{n_{k}-1}x_{0},T^{n_{k}}x_{0},T^{n_{k}}x_{0})]} %
{1+\omega^{G}_{\lambda_{0}} %
(T^{m_{k}}x_{0},T^{m_{k}}x_{0},T^{n_{k}-1}x_{0}) %
+\omega^{G}_{\lambda_{0}} %
(T^{m_{k}}x_{0},T^{n_{k}}x_{0},T^{n_{k}}x_{0})}}\\ \notag %
+&\psi\Set{\frac{\omega^{G}_{\lambda_{0}} %
(T^{m_{k}}x_{0},T^{n_{k}}x_{0},T^{n_{k}}x_{0}) %
[1+\omega^{G}_{\lambda_{0}} %
(T^{m_{k}-1}x_{0},T^{n_{k}}x_{0},T^{n_{k}}x_{0})]} %
{1+\omega^{G}_{\lambda_{0}} %
(T^{m_{k}}x_{0},T^{m_{k}}x_{0},T^{n_{k}-1}x_{0}) %
+\omega^{G}_{\lambda_{0}} %
(T^{m_{k}-1}x_{0},T^{n_{k}}x_{0},T^{n_{k}}x_{0})}} %
\end{align}%
\begin{equation}%
+\psi\Set{\frac{\omega^{G}_{\lambda_{0}} %
(T^{m_{k}}x_{0},T^{n_{k}}x_{0},T^{n_{k}}x_{0}) %
[1+\omega^{G}_{\lambda_{0}} %
(T^{n_{k}-1}x_{0},T^{n_{k}}x_{0},T^{n_{k}}x_{0}) %
+\omega^{G}_{\lambda_{0}} %
(T^{m_{k}}x_{0},T^{m_{k}}x_{0},T^{n_{k}-1}x_{0})]} %
{1+\omega^{G}_{\lambda_{0}} %
(T^{m_{k}}x_{0},T^{m_{k}}x_{0},T^{n_{k}-1}x_{0}) %
+\omega^{G}_{\lambda_{0}} %
(T^{m_{k}}x_{0},T^{n_{k}}x_{0},T^{n_{k}}x_{0})}}. %
\end{equation}%

Using the fact that $\omega^{G}_{\lambda_{0}} %
(T^{m_{k}-1}x_{0},T^{m_{k}}x_{0},T^{m_{k}}x_{0})\le  %
\omega^{G}_{\lambda_{0}} %
(T^{m_{k}-1}x_{0},T^{n_{k-1}}x_{0},T^{n_{k-1}}x_{0}),$ %
\newline%
$\omega^{G}_{\lambda_{0}}(T^{m_{k}}x_{0}, %
T^{n_{k}-1}x_{0},T^{n_{k}-1}x_{0})<\epsilon$ %
and using property 5 of Definition \ref{2.1}, we have,%
\begin{align}%
6\psi(\epsilon)\notag %
&\le %
\psi(\omega^{G}_{\lambda_{0}} %
(T^{m_{k}}x_{0},T^{n_{k}}x_{0},T^{n_{k}}x_{0}))\\\notag %
&\le\psi(\omega^{G}_{\nu(\lambda_{0})} %
(T^{m_{k}-1}x_{0},T^{m_{k}}x_{0},T^{m_{k}}x_{0}) %
+\omega^{G}_{\lambda_{0}} %
(T^{m_{k}}x_{0},T^{n_{k}-1}x_{0},T^{n_{k}-1}x_{0}))\\\notag %
&+\psi(\omega^{G}_{\lambda_{0}} %
(T^{m_{k}-1}x_{0},T^{m_{k}}x_{0},T^{m_{k}}x_{0})) %
+\psi(\omega^{G}_{\lambda_{0}} %
(T^{n_{k}-1}x_{0},T^{n_{k}}x_{0},T^{n_{k}}x_{0}))\\\notag %
&+\psi(\omega^{G}_{\frac{\lambda_{0}}{2}} %
(T^{m_{k}}x_{0},T^{n_{k}-1}x_{0},T^{n_{k}-1}x_{0})) %
+\psi(\omega^{G}_{\frac{\lambda_{0}}{2}} %
(T^{n_{k}-1}x_{0},T^{n_{k}}x_{0},T^{n_{k}}x_{0}))\\\notag %
&+\psi(\omega^{G}_{\lambda_{0}} %
(T^{m_{k}}x_{0},T^{n_{k}-1}x_{0},T^{n_{k}-1}x_{0})) %
+\psi(\omega^{G}_{\lambda_{0}} %
(T^{n_{k}-1}x_{0},T^{n_{k}}x_{0},T^{n_{k}}x_{0}))\\\notag %
&+\psi(\omega^{G}_{\lambda_{0}} %
(T^{n_{k}-1}x_{0},T^{n_{k}}x_{0},T^{n_{k}}x_{0})) %
+\psi(\omega^{G}_{\lambda_{0}} %
(T^{m_{k}-1}x_{0},T^{m_{k}}x_{0},T^{m_{k}}x_{0}))\\\notag %
&+\psi(\omega^{G}_{\frac{\lambda_{0}}{2}} %
(T^{m_{k}-1}x_{0},T^{m_{k}}x_{0},T^{m_{k}}x_{0}) %
+\omega^{G}_{\frac{\lambda_{0}}{2}} %
(T^{m_{k}}x_{0},T^{n_{k}-1}x_{0},T^{n_{k}-1}x_{0}))\\\notag %
&+\psi(\omega^{G}_{\frac{\lambda_{0}}{2}} %
(T^{m_{k}}x_{0},T^{n_{k}-1}x_{0},T^{n_{k}-1}x_{0}) %
+\omega^{G}_{\frac{\lambda_{0}}{2}} %
(T^{n_{k}-1}x_{0},T^{n_{k}}x_{0},T^{n_{k}}x_{0}))\\ %
&+\psi(\omega^{G}_{\frac{\lambda_{0}}{2}} %
(T^{m_{k}}x_{0},T^{n_{k}-1}x_{0},T^{n_{k}-1}x_{0}) %
+\omega^{G}_{\frac{\lambda_{0}}{2}} %
(T^{n_{k}-1}x_{0},T^{n_{k}}x_{0},T^{n_{k}}x_{0})), %
\end{align}%
so that
\begin{align}%
6\psi(\epsilon)\notag %
&\le %
\psi(\omega^{G}_{\lambda_{0}} %
(T^{m_{k}}x_{0},T^{n_{k}}x_{0},T^{n_{k}}x_{0}))\\\notag %
&\le\psi(\omega^{G}_{\nu(\lambda_{0})} %
(T^{m_{k}-1}x_{0},T^{m_{k}}x_{0},T^{m_{k}}x_{0})) %
+\psi(\epsilon)\\\notag %
&+\psi(\omega^{G}_{\lambda_{0}} %
(T^{m_{k}-1}x_{0},T^{m_{k}}x_{0},T^{m_{k}}x_{0})) %
+\psi(\omega^{G}_{\lambda_{0}} %
(T^{n_{k}-1}x_{0},T^{n_{k}}x_{0},T^{n_{k}}x_{0}))\\\notag %
&+\psi(\epsilon)+\psi(\omega^{G}_{\frac{\lambda_{0}}{2}} %
(T^{n_{k}-1}x_{0},T^{n_{k}}x_{0},T^{n_{k}}x_{0})) %
+\psi(\epsilon)\\\notag %
&+\psi(\omega^{G}_{\lambda_{0}} %
(T^{n_{k}-1}x_{0},T^{n_{k}}x_{0},T^{n_{k}}x_{0})) %
+\psi(\omega^{G}_{\lambda_{0}} %
(T^{n_{k}-1}x_{0},T^{n_{k}}x_{0},T^{n_{k}}x_{0}))\\\notag %
&+\psi(\omega^{G}_{\lambda_{0}} %
(T^{m_{k}-1}x_{0},T^{m_{k}}x_{0},T^{m_{k}}x_{0}))\\\notag %
&+\psi(\omega^{G}_{\frac{\lambda_{0}}{2}} %
(T^{m_{k}-1}x_{0},T^{m_{k}}x_{0},T^{m_{k}}x_{0})) %
+\psi(\epsilon)\\\notag %
&+\psi(\epsilon)+\psi(\omega^{G}_{\frac{\lambda_{0}}{2}} %
(T^{n_{k}-1}x_{0},T^{n_{k}}x_{0},T^{n_{k}}x_{0})\\ %
&+\psi(\epsilon)+\psi(\omega^{G}_{\frac{\lambda_{0}}{2}} %
(T^{n_{k}-1}x_{0},T^{n_{k}}x_{0},T^{n_{k}}x_{0})), %
\end{align}%

as $k\rightarrow\infty$, we obtain that %
$6\psi(\epsilon)\le\lim\limits_{k\rightarrow\infty} %
\psi(\omega^{G}_{\lambda_{0}} %
(T^{m_{k}}x_{0},T^{n_{k}}x_{0},T^{n_{k}}x_{0})) %
\le6\psi(\epsilon)$ %
which implies that $\lim\limits_{k\rightarrow\infty} %
\psi(\omega^{G}_{\lambda_{0}} %
(T^{m_{k}}x_{0},T^{n_{k}}x_{0},T^{n_{k}}x_{0})) %
=6\psi(\epsilon)$.  %
\newline%

Therefore, we have %

\begin{equation} %
\lim\limits_{k\rightarrow\infty}
\omega^{G}_{\lambda_{0}} %
(T^{m_{k}}x_{0},T^{n_{k}}x_{0},T^{n_{k}}x_{0}) %
=6\epsilon. %
\end{equation}%

Again, we can see that %

\begin{align}%
\psi(\omega^{G}_{\lambda_{0}} %
(T^{m_{k}}x_{0},T^{n_{k}}x_{0},T^{n_{k}}x_{0}))\notag %
&\le\psi(\omega^{G}_{\lambda_{0}+\nu(\lambda_{0})} %
(T^{m_{k}-1}x_{0},T^{n_{k}-1}x_{0},T^{n_{k}-1}x_{0}))\\\notag %
&+\psi(\omega^{G}_{\lambda_{0}} %
(T^{m_{k}-1}x_{0},T^{m_{k}}x_{0},T^{m_{k}}x_{0}))\\\notag %
&+\psi(\omega^{G}_{\lambda_{0}} %
(T^{n_{k}-1}x_{0},T^{n_{k}}x_{0},T^{n_{k}}x_{0}))\\\notag %
&+\psi(\omega^{G}_{\lambda_{0}} %
(T^{m_{k}}x_{0},T^{n_{k}}x_{0},T^{n_{k}}x_{0}))\\\notag %
&+\psi(\omega^{G}_{\lambda_{0}} %
(T^{m_{k}}x_{0},T^{n_{k}-1}x_{0},T^{n_{k}-1}x_{0}))\\\notag %
&+\psi(\omega^{G}_{\lambda_{0}} %
(T^{n_{k}-1}x_{0},T^{n_{k}}x_{0},T^{n_{k}}x_{0}))\\\notag %
&+\psi(\omega^{G}_{\lambda_{0}} %
(T^{n_{k}-1}x_{0},T^{n_{k}}x_{0},T^{n_{k}}x_{0}))\\\notag %
&+\psi(\omega^{G}_{\lambda_{0}} %
(T^{m_{k}-1}x_{0},T^{m_{k}}x_{0},T^{m_{k}}x_{0}))\\\notag %
&+\psi(\omega^{G}_{\lambda_{0}} %
(T^{m_{k}-1}x_{0},T^{n_{k}-1}x_{0},T^{n_{k}-1}x_{0}))\\\notag %
&+\psi(\omega^{G}_{\lambda_{0}} %
(T^{m_{k}}x_{0},T^{n_{k}}x_{0},T^{n_{k}}x_{0}))\\\notag %
&+\psi(\omega^{G}_{\lambda_{0}} %
(T^{m_{k}}x_{0},T^{n_{k}}x_{0},T^{n_{k}}x_{0}))\\\notag %
&\le %
\psi(\omega^{G}_{\lambda_{0}} %
(T^{m_{k}-1}x_{0},T^{n_{k}-1}x_{0},T^{n_{k}-1}x_{0}))\\\notag %
&+\psi(\omega^{G}_{\lambda_{0}} %
(T^{m_{k}-1}x_{0},T^{m_{k}}x_{0},T^{m_{k}}x_{0}))\\\notag %
&+\psi(\omega^{G}_{\lambda_{0}} %
(T^{n_{k}-1}x_{0},T^{n_{k}}x_{0},T^{n_{k}}x_{0}))\\\notag %
&+\psi(\omega^{G}_{\lambda_{0}} %
(T^{m_{k}}x_{0},T^{n_{k}}x_{0},T^{n_{k}}x_{0}))\\\notag %
&+\psi(\omega^{G}_{\lambda_{0}} %
(T^{m_{k}}x_{0},T^{n_{k}-1}x_{0},T^{n_{k}-1}x_{0}))\\\notag %
&+\psi(\omega^{G}_{\lambda_{0}} %
(T^{n_{k}-1}x_{0},T^{n_{k}}x_{0},T^{n_{k}}x_{0}))\\\notag %
&+\psi(\omega^{G}_{\lambda_{0}} %
(T^{n_{k}-1}x_{0},T^{n_{k}}x_{0},T^{n_{k}}x_{0}))\\\notag %
&+\psi(\omega^{G}_{\lambda_{0}} %
(T^{m_{k}-1}x_{0},T^{m_{k}}x_{0},T^{m_{k}}x_{0}))\\\notag %
&+\psi(\omega^{G}_{\lambda_{0}} %
(T^{m_{k}-1}x_{0},T^{n_{k}-1}x_{0},T^{n_{k}-1}x_{0}))\\\notag %
&+\psi(\omega^{G}_{\lambda_{0}} %
(T^{m_{k}}x_{0},T^{n_{k}}x_{0},T^{n_{k}}x_{0}))\\\notag %
&+\psi(\omega^{G}_{\lambda_{0}} %
(T^{m_{k}}x_{0},T^{n_{k}}x_{0},T^{n_{k}}x_{0}))\\\notag %
&\le %
\psi(\omega^{G}_{\frac{\lambda_{0}}{2}} %
(T^{m_{k}-1}x_{0},T^{m_{k}}x_{0},T^{m_{k}}x_{0}))\\\notag %
&+\psi(\omega^{G}_{\frac{\lambda_{0}}{2}} %
(T^{m_{k}}x_{0},T^{n_{k}-1}x_{0},T^{n_{k}-1}x_{0}))\\\notag %
&+\psi(\omega^{G}_{\lambda_{0}} %
(T^{m_{k}-1}x_{0},T^{m_{k}}x_{0},T^{m_{k}}x_{0}))\\\notag %
&+\psi(\omega^{G}_{\lambda_{0}} %
(T^{n_{k}-1}x_{0},T^{n_{k}}x_{0},T^{n_{k}}x_{0}))\\\notag %
&+\psi(\omega^{G}_{\frac{\lambda_{0}}{2}} %
(T^{m_{k}}x_{0},T^{n_{k}-1}x_{0},T^{n_{k}-1}x_{0}))\\\notag %
&+\psi(\omega^{G}_{\frac{\lambda_{0}}{2}} %
(T^{n_{k}-1}x_{0},T^{n_{k}}x_{0},T^{n_{k}}x_{0}))\\\notag %
&+\psi(\omega^{G}_{\lambda_{0}} %
(T^{m_{k}}x_{0},T^{n_{k}-1}x_{0},T^{n_{k}-1}x_{0}))\\\notag %
&+\psi(\omega^{G}_{\lambda_{0}} %
(T^{n_{k}-1}x_{0},T^{n_{k}}x_{0},T^{n_{k}}x_{0}))\\\notag %
&+\psi(\omega^{G}_{\lambda_{0}} %
(T^{n_{k}-1}x_{0},T^{n_{k}}x_{0},T^{n_{k}}x_{0}))\\\notag %
&+\psi(\omega^{G}_{\lambda_{0}} %
(T^{m_{k}-1}x_{0},T^{m_{k}}x_{0},T^{m_{k}}x_{0}))\\\notag %
&+\psi(\omega^{G}_{\lambda_{0}} %
(T^{m_{k}-1}x_{0},T^{n_{k}-1}x_{0},T^{n_{k}-1}x_{0}))\\\notag %
&+\psi(\omega^{G}_{\frac{\lambda_{0}}{2}} %
(T^{m_{k}}x_{0},T^{n_{k}-1}x_{0},T^{n_{k}-1}x_{0}))\\\notag %
&+\psi(\omega^{G}_{\frac{\lambda_{0}}{2}} %
(T^{n_{k}-1}x_{0},T^{n_{k}}x_{0},T^{n_{k}}x_{0}))\\\notag %
&+\psi(\omega^{G}_{\frac{\lambda_{0}}{2}} %
(T^{m_{k}}x_{0},T^{n_{k}-1}x_{0},T^{n_{k}-1}x_{0}))\\\notag %
&+\psi(\omega^{G}_{\frac{\lambda_{0}}{2}} %
(T^{n_{k}-1}x_{0},T^{n_{k}}x_{0},T^{n_{k}}x_{0}))\\\notag %
&\le %
\psi(\omega^{G}_{\lambda_{0}} %
(T^{m_{k}-1}x_{0},T^{m_{k}}x_{0},T^{m_{k}}x_{0}))\\\notag %
&+\psi(\omega^{G}_{\lambda_{0}} %
(T^{m_{k}}x_{0},T^{n_{k}-1}x_{0},T^{n_{k}-1}x_{0}))\\\notag %
&+\psi(\omega^{G}_{\lambda_{0}} %
(T^{m_{k}-1}x_{0},T^{m_{k}}x_{0},T^{m_{k}}x_{0}))\\\notag %
&+\psi(\omega^{G}_{\lambda_{0}} %
(T^{n_{k}-1}x_{0},T^{n_{k}}x_{0},T^{n_{k}}x_{0}))\\\notag %
&+\psi(\omega^{G}_{\lambda_{0}} %
(T^{m_{k}}x_{0},T^{n_{k}-1}x_{0},T^{n_{k}-1}x_{0}))\\\notag %
&+\psi(\omega^{G}_{\lambda_{0}} %
(T^{n_{k}-1}x_{0},T^{n_{k}}x_{0},T^{n_{k}}x_{0}))\\\notag %
&+\psi(\omega^{G}_{\lambda_{0}} %
(T^{m_{k}}x_{0},T^{n_{k}-1}x_{0},T^{n_{k}-1}x_{0}))\\\notag %
&+\psi(\omega^{G}_{\lambda_{0}} %
(T^{n_{k}-1}x_{0},T^{n_{k}}x_{0},T^{n_{k}}x_{0}))\\\notag %
&+\psi(\omega^{G}_{\lambda_{0}} %
(T^{n_{k}-1}x_{0},T^{n_{k}}x_{0},T^{n_{k}}x_{0}))\\\notag
&+\psi(\omega^{G}_{\lambda_{0}}
(T^{m_{k}-1}x_{0},T^{m_{k}}x_{0},T^{m_{k}}x_{0}))\\\notag %
&+\psi(\omega^{G}_{\lambda_{0}} %
(T^{m_{k}-1}x_{0},T^{m_{k}}x_{0},T^{m_{k}}x_{0}))\\\notag %
&+\psi(\omega^{G}_{\lambda_{0}} %
(T^{m_{k}}x_{0},T^{n_{k}-1}x_{0},T^{n_{k}-1}x_{0}))\\\notag %
&+\psi(\omega^{G}_{\lambda_{0}} %
(T^{m_{k}}x_{0},T^{n_{k}-1}x_{0},T^{n_{k}-1}x_{0}))\\\notag %
&+\psi(\omega^{G}_{\lambda_{0}} %
(T^{n_{k}-1}x_{0},T^{n_{k}}x_{0},T^{n_{k}}x_{0}))\\\notag %
&+\psi(\omega^{G}_{\lambda_{0}} %
(T^{m_{k}}x_{0},T^{n_{k}-1}x_{0},T^{n_{k}-1}x_{0}))\\ %
&+\psi(\omega^{G}_{\lambda_{0}} %
(T^{n_{k}-1}x_{0},T^{n_{k}}x_{0},T^{n_{k}}x_{0})). %
\end{align}%

Letting $k\rightarrow\infty$, we deduce that %
$\psi(6\epsilon)\le\lim\limits_{k\rightarrow\infty} %
\psi(\omega^{G}_{\lambda_{0}}(T^{m_{k}-1}x_{0}, %
T^{n_{k}-1}x_{0},T^{n_{k}-1}x_{0})) %
\le\psi(6\epsilon)$, for which we have that %
$\lim\limits_{k\rightarrow\infty} %
\psi(\omega^{G}_{\lambda_{0}}(T^{m_{k}-1}x_{0}, %
T^{n_{k}-1}x_{0},T^{n_{k}-1}x_{0})) %
=\psi(6\epsilon)$.  %
\newline %
Hence $\lim\limits_{k\rightarrow %
\infty}\omega^{G}_{\lambda_{0}} %
(T^{m_{k}-1}x_{0},T^{n_{k}-1}x_{0}, %
T^{n_{k}-1}x_{0})=6\epsilon.$ %
Thus, it follows that %
\begin{align}%
1\notag %
&\le\lim\limits_{k\rightarrow %
\infty}\inf\biggl(\kappa_{1} %
(\psi(\omega^{G}_{\lambda_{0}} %
(T^{m_{k}-1}x_{0},T^{n_{k}-1}x_{0},T^{n_{k}-1}x_{0})))\notag %
+\kappa_{2}(\psi(\omega^{G}_{\lambda_{0}} %
(T^{m_{k}-1}x_{0},T^{n_{k}-1}x_{0},T^{n_{k}-1}x_{0})))\\\notag %
&+\kappa_{3}(\psi(\omega^{G}_{\lambda_{0}} %
(T^{m_{k}-1}x_{0},T^{n_{k}-1}x_{0},T^{n_{k}-1}x_{0})))\notag %
+\kappa_{4}(\psi(\omega^{G}_{\lambda_{0}} %
(T^{m_{k}-1}x_{0},T^{n_{k}-1}x_{0},T^{n_{k}-1}x_{0})))\\\notag %
&+\kappa_{5}(\psi(\omega^{G}_{\lambda_{0}} %
(T^{m_{k}-1}x_{0},T^{n_{k}-1}x_{0},T^{n_{k}-1}x_{0})))\notag %
+\kappa_{6}(\psi(\omega^{G}_{\lambda_{0}} %
(T^{m_{k}-1}x_{0},T^{n_{k}-1}x_{0},T^{n_{k}-1}x_{0})))\\\notag %
&+\kappa_{7}(\psi(\omega^{G}_{\lambda_{0}} %
(T^{m_{k}-1}x_{0},T^{n_{k}-1}x_{0},T^{n_{k}-1}x_{0})))\notag %
+\kappa_{8}(\psi(\omega^{G}_{\lambda_{0}} %
(T^{m_{k}-1}x_{0},T^{n_{k}-1}x_{0},T^{n_{k}-1}x_{0})))\\\notag %
&+\kappa_{9}(\psi(\omega^{G}_{\lambda_{0}} %
(T^{m_{k}-1}x_{0},T^{n_{k}-1}x_{0},T^{n_{k}-1}x_{0})))\notag %
+\kappa_{10}(\psi(\omega^{G}_{\lambda_{0}} %
(T^{m_{k}-1}x_{0},T^{n_{k}-1}x_{0},T^{n_{k}-1}x_{0})))\\ %
&+\kappa_{11}(\psi(\omega^{G}_{\lambda_{0}} %
(T^{m_{k}-1}x_{0},T^{n_{k}-1}x_{0},T^{n_{k}-1}x_{0})))\bigg). %
\end{align}%

Therefore, %
$\lim\limits_{k\rightarrow\infty}\psi(\omega^{G}_{\lambda_{0}} %
(T^{m_{k}-1}x_{0},T^{n_{k}-1}x_{0},T^{n_{k}-1}x_{0}))=0$, %
which implies that %
\begin{equation} %
\lim\limits_{k\rightarrow\infty}\omega^{G}_{\lambda_{0}} %
(T^{m_{k}-1}x_{0},T^{n_{k}-1}x_{0},T^{n_{k}-1}x_{0})=0, %
\end{equation}%

which is a contradiction. 
Hence $\{T^{n}x_{0}\}_{n\in\mathbb{N}}$ is a %
modular G-Cauchy sequence in $X_{\omega^{G}}$. %
\newline
(d)	
Since $X_{\omega^{G}}$ is complete modular G-metric space, %
there exists $x^{\ast}\in X_{\omega^{G}}$ such that %
$T^{n}x_{0}\rightarrow x^{\ast}\in X_{\omega^{G}}$. %
Now we show that $x^{\ast}$ is a fixed point of $T$ %
for $\lambda>0$, suppose that $Tx^{\ast}\neq x^{\ast}$, %
applying condition 5 of Definition \ref{2.1}, we have that %
\begin{align} \label{do}%
\psi(\omega^{G}_{\lambda} %
(T^{n}x_{0},Tx^{\ast},Tx^{\ast}))\notag %
&\le\psi(\omega^{G}_{\frac{\lambda}{2}} %
(T^{n+1}x_{0},Tx^{\ast},Tx^{\ast}))\notag %
+\psi(\omega^{G}_{\frac{\lambda}{2}} %
(T^{n}x_{0},T^{n+1}x_{0},T^{n+1}x_{0}))\\\notag %
&=\psi(\omega^{G}_{\frac{\lambda}{2}} %
(T^{n+1}x_{0},Tx^{\ast},Tx^{\ast}))\notag %
+\psi(\omega^{G}_{\frac{\lambda}{2}} %
(T^{n}x_{0},T^{n+1}x_{0},T^{n+1}x_{0}))\\ %
&\le\psi(\omega^{G}_{\lambda} %
(T^{n+1}x_{0},Tx^{\ast},Tx^{\ast})) %
+\psi(\omega^{G}_{\lambda} %
(T^{n}x_{0},T^{n+1}x_{0},T^{n+1}x_{0})). %
\end{align}%

Putting  $x=T^{n}x_{0}, y=x^{\ast}$ in the fist 
quantity in inequality \ref{do} 
into inequality \ref{a}, take $g_{n}=\psi(\omega^{G}_{\lambda} %
(T^{n}x_{0},Tx^{\ast},Tx^{\ast}))$, we have %
\begin{align}\label{g}%
g_{n}\notag %
\le&\kappa_{1}(\psi(\omega^{G}_{\lambda}(T^{n}x_{0},x^{\ast},x^{\ast}))) %
\psi(\omega^{G}_{\lambda+\nu(\lambda)}(T^{n}x_{0},x^{\ast},x^{\ast}))\\ \notag %
+&\kappa_{2}(\psi(\omega^{G}_{\lambda}(T^{n}x_{0},x^{\ast},x^{\ast}))) %
\psi(\omega^{G}_{\lambda}(T^{n}x_{0},TT^{n}x_{0},TT^{n}x_{0}))\\ \notag %
+& \kappa_{3}(\psi(\omega^{G}_{\lambda}(T^{n}x_{0},x^{\ast},x^{\ast}))) %
\psi(\omega^{G}_{\lambda}(x^{\ast},Tx^{\ast},Tx^{\ast}))\\ \notag %
+&\kappa_{4}(\psi(\omega^{G}_{\lambda}(T^{n}x_{0},x^{\ast},x^{\ast}))) %
\psi(\omega^{G}_{\lambda}(TT^{n}x_{0},Tx^{\ast},Tx^{\ast}))\\ \notag %
+&\kappa_{5}(\psi(\omega^{G}_{\lambda}(T^{n}x_{0},x^{\ast},x^{\ast}))) %
\psi(\omega^{G}_{\lambda}(TT^{n}x_{0},x^{\ast},x^{\ast}))\\ \notag %
+&\kappa_{6}(\psi(\omega^{G}_{\lambda}(T^{n}x_{0},x^{\ast},x^{\ast})))\\\notag %
\times&\psi\Set{\frac{\omega^{G}_{\lambda}(x^{\ast},Tx^{\ast},Tx^{\ast}) %
[1+\omega^{G}_{\lambda}(T^{n}x_{0},x^{\ast},x^{\ast})]} %
{1+\omega^{G}_{\lambda}(T^{n}x_{0},TT^{n}x_{0},TT^{n}x_{0})}}\\ \notag %
+&\kappa_{7}(\psi(\omega^{G}_{\lambda}(T^{n}x_{0},x^{\ast},x^{\ast})))\\\notag %
\times&\psi\Set{\frac{\omega^{G}_{\lambda}(x^{\ast},Tx^{\ast},Tx^{\ast}) %
[1+\omega^{G}_{\lambda}(T^{n}x_{0},TT^{n}x_{0},TT^{n}x_{0})]} %
{1+\omega^{G}_{\lambda}(T^{n}x_{0},x^{\ast},x^{\ast})}}\\ \notag %
+&\kappa_{8}(\psi(\omega^{G}_{\lambda}(T^{n}x_{0},x^{\ast},x^{\ast})))\\\notag %
\times&\psi\Set{\frac{\omega^{G}_{\lambda}(T^{n}x_{0},TT^{n}x_{0},TT^{n}x_{0}) %
[1+\omega^{G}_{\lambda}(T^{n}x_{0},x^{\ast},x^{\ast})]} %
{1+\omega^{G}_{\lambda}(x^{\ast},TT^{n}x_{0},TT^{n}x_{0}) %
+\omega^{G}_{\lambda}(T^{n}x_{0},x^{\ast},x^{\ast})}}\\ \notag %
+&\kappa_{9}(\psi(\omega^{G}_{\lambda}(T^{n}x_{0},x^{\ast},x^{\ast})))\\\notag %
\times&\psi\Set{\frac{\omega^{G}_{\lambda}(T^{n}x_{0},x^{\ast},x^{\ast}) %
[1+\omega^{G}_{\lambda}(x^{\ast},Tx^{\ast},Tx^{\ast})]} %
{1+\omega^{G}_{\lambda}(TT^{n}x_{0},TT^{n}x_{0},x^{\ast}) %
+\omega^{G}_{\lambda}(TT^{n}x_{0},Tx^{\ast},Tx^{\ast})}}\\ \notag %
+&\kappa_{10}(\psi(\omega^{G}_{\lambda}(T^{n}x_{0},x^{\ast},x^{\ast})))\\\notag %
\times&\psi\Set{\frac{\omega^{G}_{\lambda}(TT^{n}x_{0},Tx^{\ast},Tx^{\ast}) %
[1+\omega^{G}_{\lambda}(T^{n}x_{0},Tx^{\ast},Tx^{\ast})]} %
{1+\omega^{G}_{\lambda}(TT^{n}x_{0},TT^{n}x_{0},x^{\ast}) %
+\omega^{G}_{\lambda}(T^{n}x_{0},Tx^{\ast},Tx^{\ast})}}\\ \notag %
+&\kappa_{11}(\psi(\omega^{G}_{\lambda}(T^{n}x_{0},x^{\ast},x^{\ast})))\\\notag %
\times&\psi\Set{\frac{\omega^{G}_{\lambda}(TT^{n}x_{0},Tx^{\ast},Tx^{\ast}) %
[1+\omega^{G}_{\lambda}(x^{\ast},Tx^{\ast},Tx^{\ast}) %
+\omega^{G}_{\lambda}(TT^{n}x_{0},TT^{n}x_{0},x^{\ast})]} %
{1+\omega^{G}_{\lambda}(TT^{n}x_{0},TT^{n}x_{0},x^{\ast}) %
+\omega^{G}_{\lambda}(TT^{n}x_{0},Tx^{\ast},Tx^{\ast})}}\\\notag %
+&\psi(\omega^{G}_{\lambda} %
(T^{n}x_{0},T^{n+1}x_{0},T^{n+1}x_{0}))\\\notag %
=&\kappa_{1}(\psi(\omega^{G}_{\lambda}(T^{n}x_{0},x^{\ast},x^{\ast}))) %
\psi(\omega^{G}_{\lambda+\nu(\lambda)}(T^{n}x_{0},x^{\ast},x^{\ast}))\\ \notag %
+&\kappa_{2}(\psi(\omega^{G}_{\lambda}(T^{n}x_{0},x^{\ast},x^{\ast}))) %
\psi(\omega^{G}_{\lambda}(T^{n}x_{0},T^{n+1}x_{0},T^{n+1}x_{0}))\\ \notag %
+& \kappa_{3}(\psi(\omega^{G}_{\lambda}(T^{n}x_{0},x^{\ast},x^{\ast}))) %
\psi(\omega^{G}_{\lambda}(x^{\ast},Tx^{\ast},Tx^{\ast}))\\ \notag %
+&\kappa_{4}(\psi(\omega^{G}_{\lambda}(T^{n}x_{0},x^{\ast},x^{\ast}))) %
\psi(\omega^{G}_{\lambda}(T^{n+1}x_{0},Tx^{\ast},Tx^{\ast}))\\ \notag %
+&\kappa_{5}(\psi(\omega^{G}_{\lambda}(T^{n}x_{0},x^{\ast},x^{\ast}))) %
\psi(\omega^{G}_{\lambda}(T^{n+1}x_{0},x^{\ast},x^{\ast}))\\ \notag %
+&\kappa_{6}(\psi(\omega^{G}_{\lambda}(T^{n}x_{0},x^{\ast},x^{\ast})))\\\notag %
\times&\psi\Set{\frac{\omega^{G}_{\lambda}(x^{\ast},Tx^{\ast},Tx^{\ast}) %
[1+\omega^{G}_{\lambda}(T^{n}x_{0},x^{\ast},x^{\ast})]} %
{1+\omega^{G}_{\lambda}(T^{n}x_{0},T^{n+1}x_{0},T^{n+1}x_{0})}}\\ \notag %
+&\kappa_{7}(\psi(\omega^{G}_{\lambda}(T^{n}x_{0},x^{\ast},x^{\ast})))\\\notag %
\times&\psi\Set{\frac{\omega^{G}_{\lambda}(x^{\ast},Tx^{\ast},Tx^{\ast}) %
[1+\omega^{G}_{\lambda}(T^{n}x_{0},T^{n+1}x_{0},T^{n+1}x_{0})]} %
{1+\omega^{G}_{\lambda}(T^{n}x_{0},x^{\ast},x^{\ast})}}\\ \notag %
+&\kappa_{8}(\psi(\omega^{G}_{\lambda}(T^{n}x_{0},x^{\ast},x^{\ast})))\\\notag %
\times&\psi\Set{\frac{\omega^{G}_{\lambda}(T^{n}x_{0},T^{n+1}x_{0},T^{n+1}x_{0}) %
[1+\omega^{G}_{\lambda}(T^{n}x_{0},x^{\ast},x^{\ast})]} %
{1+\omega^{G}_{\lambda}(x^{\ast},T^{n+1}x_{0},T^{n+1}x_{0}) %
+\omega^{G}_{\lambda}(T^{n}x_{0},x^{\ast},x^{\ast})}}\\ \notag %
+&\kappa_{9}(\psi(\omega^{G}_{\lambda}(T^{n}x_{0},x^{\ast},x^{\ast})))\\\notag %
\times&\psi\Set{\frac{\omega^{G}_{\lambda}(T^{n}x_{0},x^{\ast},x^{\ast}) %
[1+\omega^{G}_{\lambda}(x^{\ast},Tx^{\ast},Tx^{\ast})]} %
{1+\omega^{G}_{\lambda}(T^{n+1}x_{0},T^{n+1}x_{0},x^{\ast}) %
+\omega^{G}_{\lambda}(T^{n+1}x_{0},Tx^{\ast},Tx^{\ast})}}\\ \notag %
+&\kappa_{10}(\psi(\omega^{G}_{\lambda}(T^{n}x_{0},x^{\ast},x^{\ast})))\\\notag %
\times&\psi\Set{\frac{\omega^{G}_{\lambda}(T^{n+1}x_{0},Tx^{\ast},Tx^{\ast}) %
[1+\omega^{G}_{\lambda}(T^{n}x_{0},Tx^{\ast},Tx^{\ast})]} %
{1+\omega^{G}_{\lambda}(T^{n+1}x_{0},T^{n+1}x_{0},x^{\ast}) %
+\omega^{G}_{\lambda}(T^{n}x_{0},Tx^{\ast},Tx^{\ast})}}\\ \notag %
+&\kappa_{11}(\psi(\omega^{G}_{\lambda}(T^{n}x_{0},x^{\ast},x^{\ast})))\\\notag %
\times&\psi\Set{\frac{\omega^{G}_{\lambda}(T^{n+1}x_{0},Tx^{\ast},Tx^{\ast}) %
[1+\omega^{G}_{\lambda}(x^{\ast},Tx^{\ast},Tx^{\ast}) %
+\omega^{G}_{\lambda}(T^{n+1}x_{0},T^{n+1}x_{0},x^{\ast})]} %
{1+\omega^{G}_{\lambda}(T^{n+1}x_{0},T^{n+1}x_{0},x^{\ast}) %
+\omega^{G}_{\lambda}(T^{n+1}x_{0},Tx^{\ast},Tx^{\ast})}}\\ %
+&\psi(\omega^{G}_{\lambda} %
(T^{n}x_{0},T^{n+1}x_{0},T^{n+1}x_{0})).\\\notag %
\end{align}%

Letting $n\rightarrow\infty$, we have that %

\begin{align} %
\psi(\omega^{G}_{\lambda}(x^{\ast},Tx^{\ast},Tx^{\ast}))\notag %
\le &\kappa_{1}(\psi(\omega^{G}_{\lambda}(x^{\ast},x^{\ast},x^{\ast}))) %
\psi(\omega^{G}_{\lambda+\nu(\lambda)}(x^{\ast},x^{\ast},x^{\ast}))\\ \notag %
+&\kappa_{2}(\psi(\omega^{G}_{\lambda}(x^{\ast},x^{\ast},x^{\ast}))) %
\psi(\omega^{G}_{\lambda}(x^{\ast},x^{\ast},x^{\ast}))\\ \notag %
+& \kappa_{3}(\psi(\omega^{G}_{\lambda}(x^{\ast},x^{\ast},x^{\ast}))) %
\psi(\omega^{G}_{\lambda}(x^{\ast},Tx^{\ast},Tx^{\ast}))\\ \notag %
+&\kappa_{4}(\psi(\omega^{G}_{\lambda}(x^{\ast},x^{\ast},x^{\ast}))) %
\psi(\omega^{G}_{\lambda}(x^{\ast},Tx^{\ast},Tx^{\ast}))\\ \notag %
+&\kappa_{5}(\psi(\omega^{G}_{\lambda}(x^{\ast},x^{\ast},x^{\ast}))) %
\psi(\omega^{G}_{\lambda}(x^{\ast},x^{\ast},x^{\ast}))\\ \notag %
+&\kappa_{6}(\psi(\omega^{G}_{\lambda}(x^{\ast},x^{\ast},x^{\ast})))\\\notag %
\times&\psi\Set{\frac{\omega^{G}_{\lambda}(x^{\ast},Tx^{\ast},Tx^{\ast}) %
[1+\omega^{G}_{\lambda}(x^{\ast},x^{\ast},x^{\ast})]} %
{1+\omega^{G}_{\lambda}(x^{\ast},x^{\ast},x^{\ast})}}\\ \notag %
+&\kappa_{7}(\psi(\omega^{G}_{\lambda}(x^{\ast},x^{\ast},x^{\ast})))\\\notag %
\times&\psi\Set{\frac{\omega^{G}_{\lambda}(x^{\ast},Tx^{\ast},Tx^{\ast}) %
[1+\omega^{G}_{\lambda}(x^{\ast},x^{\ast},x^{\ast})]} %
{1+\omega^{G}_{\lambda}(x^{\ast},x^{\ast},x^{\ast})}}\\ \notag %
+&\kappa_{8}(\psi(\omega^{G}_{\lambda}(x^{\ast},x^{\ast},x^{\ast})))\\\notag %
\times&\psi\Set{\frac{\omega^{G}_{\lambda}(x^{\ast},x^{\ast},x^{\ast}) %
[1+\omega^{G}_{\lambda}(x^{\ast},x^{\ast},x^{\ast})]} %
{1+\omega^{G}_{\lambda}(x^{\ast},x^{\ast},x^{\ast}) %
+\omega^{G}_{\lambda}(x^{\ast},x^{\ast},x^{\ast})}}\\ \notag %
+&\kappa_{9}(\psi(\omega^{G}_{\lambda}(x^{\ast},x^{\ast},x^{\ast})))\\\notag %
\times&\psi\Set{\frac{\omega^{G}_{\lambda}(x^{\ast},x^{\ast},x^{\ast}) %
[1+\omega^{G}_{\lambda}(x^{\ast},Tx^{\ast},Tx^{\ast})]} %
{1+\omega^{G}_{\lambda}(x^{\ast},x^{\ast},x^{\ast}) %
+\omega^{G}_{\lambda}(x^{\ast},Tx^{\ast},Tx^{\ast})}}\\ \notag %
+&\kappa_{10}(\psi(\omega^{G}_{\lambda}(x^{\ast},x^{\ast},x^{\ast})))\\\notag %
\times&\psi\Set{\frac{\omega^{G}_{\lambda}(x^{\ast},Tx^{\ast},Tx^{\ast}) %
[1+\omega^{G}_{\lambda}(x^{\ast},Tx^{\ast},Tx^{\ast})]} %
{1+\omega^{G}_{\lambda}(x^{\ast},x^{\ast},x^{\ast}) %
+\omega^{G}_{\lambda}(x^{\ast},Tx^{\ast},Tx^{\ast})}}\\ \notag %
+&\kappa_{11}(\psi(\omega^{G}_{\lambda}(x^{\ast},x^{\ast},x^{\ast})))\\\notag %
\times&\psi\Set{\frac{\omega^{G}_{\lambda}(x^{\ast},Tx^{\ast},Tx^{\ast}) %
[1+\omega^{G}_{\lambda}(x^{\ast},Tx^{\ast},Tx^{\ast}) %
+\omega^{G}_{\lambda}(x^{\ast},x^{\ast},x^{\ast})]} %
{1+\omega^{G}_{\lambda}(x^{\ast},x^{\ast},x^{\ast}) %
+\omega^{G}_{\lambda}(x^{\ast},Tx^{\ast},Tx^{\ast})}}\\ %
+&\psi(\omega^{G}_{\lambda} %
(x^{\ast},x^{\ast},x^{\ast})). %
\end{align}%

Using condition 1 of Definition \ref{2.1}, we have %
\begin{equation}\label{mee} %
  \psi(\omega^{G}_{\lambda}(x^{\ast},Tx^{\ast},Tx^{\ast})) %
  \le(\kappa_{3}(0)+\kappa_{4}(0)+ %
  \kappa_{6}(0)+\kappa_{7}(0)+\kappa_{10}(0)+\kappa_{11}(0)) %
  \psi(\omega^{G}_{\lambda}(x^{\ast},Tx^{\ast},Tx^{\ast})). %
\end{equation}%
So that %
\begin{equation}\label{m3} %
  (1-(\kappa_{3}(0)+\kappa_{4}(0)+ %
  \kappa_{6}(0)+\kappa_{7}(0)+\kappa_{10}(0)+\kappa_{11}(0)))  %
  \psi(\omega^{G}_{\lambda}(x^{\ast},Tx^{\ast},Tx^{\ast}))\le 0, %
\end{equation} %
where, $\kappa_{3}(0)+\kappa_{4}(0)+ %
  \kappa_{6}(0)+\kappa_{7}(0)+\kappa_{10}(0)+\kappa_{11}(0)<1$ %
  and for all $\lambda>0$. %
  Therefore, %
$\psi(\omega^{G}_{\lambda}(x^{\ast},Tx^{\ast},Tx^{\ast})) %
\le 0$ which implies that $\omega^{G}_{\lambda} %
(x^{\ast},Tx^{\ast},Tx^{\ast})\le 0$. %
Hence $Tx^{\ast}=x^{\ast}$. Therefore,  %
$x^{\ast}$ is a fixed point of $T$. %
Lastly, for the uniqueness, we know that $T$ has %
a fixed point $x^{\ast}\in X_{\omega^{G}}$.  %
Suppose that there is another fixed point of %
$T$ i.e; $y^{\ast}\in X_{\omega^{G}}$, thus  %
condition (2) of \autoref{3.1} says that if %
$z\in X_{\omega^{G}}$ with $z\preceq Tz$ and  %
it is comparable to both %
$x^{\ast}$ and $y^{\ast}$ and $T^{n}z$ is  %
also comparable to $x^{\ast}$ %
and $y^{\ast}$ for each $n\in\mathbb{N}$.  %
Now for $\lambda>0$, then %
$\psi(\omega^{G}_{\lambda}(T^{n+1}z,x^{\ast},x^{\ast}))$ and   %
$\psi(\omega^{G}_{\lambda}(T^{n+1}z,y^{\ast},y^{\ast}))$  %
are finite. %
\newline
(e)
We now show that $x^{\ast}=y^{\ast}$. Indeed, using inequality \ref{a}, %
we have by taking $x=T^{n}z$ and $y=x^{\ast}$. First consider  %
$\psi(\omega^{G}_{\lambda}(T^{n+1}z,x^{\ast},x^{\ast}))<\infty,$ %
so that we have the following; %
\begin{align}
\psi(\omega^{G}_{\lambda}(T^{n+1}z,x^{\ast},x^{\ast}))\notag %
=&\psi(\omega_{\lambda}(T^{n+1}z,Tx^{\ast},Tx^{\ast}))\\\notag %
=&\psi(\omega^{G}_{\lambda}(TT^{n}z,Tx^{\ast},Tx^{\ast}))\\\notag %
\le&\kappa_{1}(\psi(\omega^{G}_{\lambda}(T^{n}z,x^{\ast},x^{\ast}))) %
\psi(\omega^{G}_{\lambda+\nu(\lambda)}(T^{n}z,x^{\ast},x^{\ast})) \notag %
+\kappa_{2}(\psi(\omega^{G}_{\lambda}(T^{n}z,x^{\ast},x^{\ast}))) %
\\\notag\times&\psi(\omega^{G}_{\lambda}(T^{n}z,TT^{n}z,TT^{n}z)) \notag %
+\kappa_{3}(\psi(\omega^{G}_{\lambda}(T^{n}z,x^{\ast},x^{\ast}))) %
\psi(\omega^{G}_{\lambda}(x^{\ast},Tx^{\ast},Tx^{\ast}))\\ \notag %
+&\kappa_{4}(\psi(\omega^{G}_{\lambda}(T^{n}z,x^{\ast},x^{\ast}))) %
\psi(\omega^{G}_{\lambda}(TT^{n}z,Tx^{\ast},Tx^{\ast}))\\ \notag %
+&\kappa_{5}(\psi(\omega^{G}_{\lambda}(T^{n}z,x^{\ast},x^{\ast}))) %
\psi(\omega^{G}_{\lambda}(TT^{n}z,x^{\ast},x^{\ast}))\notag %
+\kappa_{6}(\psi(\omega^{G}_{\lambda}(T^{n}z,x^{\ast},x^{\ast})))\\ \notag %
\times&\psi\Set{\frac{\omega^{G}_{\lambda}(x^{\ast},Tx^{\ast},Tx^{\ast}) %
[1+\omega^{G}_{\lambda}(T^{n}z,x^{\ast},x^{\ast})]} %
{1+\omega^{G}_{\lambda}(T^{n}z,TT^{n}z,T^{n}z)}}\\ \notag %
+&\kappa_{7}(\psi(\omega^{G}_{\lambda}(T^{n}z,x^{\ast},x^{\ast})))\\ \notag %
\times&\psi\Set{\frac{\omega^{G}_{\lambda}(x^{\ast},Tx^{\ast},Tx^{\ast}) %
[1+\omega^{G}_{\lambda}(T^{n}z,TT^{n}z,TT^{n}z)]} %
{1+\omega^{G}_{\lambda}(T^{n}z,x^{\ast},x^{\ast})}}\\ \notag %
+&\kappa_{8}(\psi(\omega^{G}_{\lambda}(T^{n}z,x^{\ast},x^{\ast})))\\ \notag %
\times&\psi\Set{\frac{\omega^{G}_{\lambda}(T^{n}z,TT^{n}z,TT^{n}z) %
[1+\omega^{G}_{\lambda}(T^{n}z,x^{\ast},x^{\ast})]} %
{1+\omega^{G}_{\lambda}(x^{\ast},TT^{n}z,TT^{n}z) %
+\omega^{G}_{\lambda}(T^{n}z,x^{\ast},x^{\ast})}}\\ \notag %
+&\kappa_{9}(\psi(\omega^{G}_{\lambda}(T^{n}z,x^{\ast},x^{\ast})))\\ \notag %
\times&\psi\Set{\frac{\omega^{G}_{\lambda}(T^{n}z,x^{\ast},x^{\ast}) %
[1+\omega^{G}_{\lambda}(x^{\ast},Tx^{\ast},Tx^{\ast})]} %
{1+\omega^{G}_{\lambda}(TT^{n}z,TT^{n}z,x^{\ast}) %
+\omega^{G}_{\lambda}(TT^{n}z,Tx^{\ast},Tx^{\ast})}}\\ \notag %
+&\kappa_{10}(\psi(\omega^{G}_{\lambda}(T^{n}z,x^{\ast},x^{\ast})))\\ \notag %
\times&\psi\Set{\frac{\omega^{G}_{\lambda}(TT^{n}z,Tx^{\ast},Tx^{\ast}) %
[1+\omega^{G}_{\lambda}(T^{n}z,Tx^{\ast},Tx^{\ast})]} %
{1+\omega^{G}_{\lambda}(TT^{n}z,TT^{n}z,x^{\ast}) %
+\omega^{G}_{\lambda}(T^{n}z,Tx^{\ast},Tx^{\ast})}}\\ \notag %
+&\kappa_{11}(\psi(\omega^{G}_{\lambda}(T^{n}z,x^{\ast},x^{\ast})))\\ %
\times&\psi\Set{\frac{\omega^{G}_{\lambda}(TT^{n}z,Tx^{\ast},Tx^{\ast}) %
[1+\omega^{G}_{\lambda}(x^{\ast},Tx^{\ast},Tx^{\ast}) %
+\omega^{G}_{\lambda}(TT^{n}z,TT^{n}z,x^{\ast})]} %
{1+\omega^{G}_{\lambda}(TT^{n}z,TT^{n}z,x^{\ast}) %
+\omega^{G}_{\lambda}(TT^{n}z,Tx^{\ast},Tx^{\ast})}}. %
\end{align}%

So that, %

\begin{align}%
\psi(\omega^{G}_{\lambda}(T^{n+1}z,x^{\ast},x^{\ast}))\notag %
\le&\kappa_{1}(\psi(\omega^{G}_{\lambda}(T^{n}z,x^{\ast},x^{\ast}))) %
\psi(\omega^{G}_{\lambda+\nu(\lambda)}(T^{n}z,x^{\ast},x^{\ast}))\\ \notag %
+&\kappa_{2}(\psi(\omega^{G}_{\lambda}(T^{n}z,x^{\ast},x^{\ast}))) %
\psi(\omega^{G}_{\lambda}(T^{n}z,T^{n+1}z,T^{n+1}z))\\ \notag %
+&\kappa_{3}(\psi(\omega^{G}_{\lambda}(T^{n}z,x^{\ast},x^{\ast}))) %
\psi(\omega^{G}_{\lambda}(x^{\ast},Tx^{\ast},Tx^{\ast}))\\ \notag %
+&\kappa_{4}(\psi(\omega^{G}_{\lambda}(T^{n}z,x^{\ast},x^{\ast}))) %
\psi(\omega^{G}_{\lambda}(T^{n+1}z,Tx^{\ast},Tx^{\ast}))\\ \notag %
+&\kappa_{5}(\psi(\omega^{G}_{\lambda}(T^{n}z,x^{\ast},x^{\ast}))) %
\psi(\omega^{G}_{\lambda}(T^{n+1}z,x^{\ast},x^{\ast}))\\\notag %
+&\kappa_{6}(\psi(\omega^{G}_{\lambda}(T^{n}z,x^{\ast},x^{\ast})))\\ \notag %
\times&\psi\Set{\frac{\omega^{G}_{\lambda}(x^{\ast},Tx^{\ast},Tx^{\ast}) %
[1+\omega^{G}_{\lambda}(T^{n}z,x^{\ast},x^{\ast})]} %
{1+\omega^{G}_{\lambda}(T^{n}z,T^{n+1}z,T^{n}z)}}\\ \notag %
+&\kappa_{7}(\psi(\omega^{G}_{\lambda}(T^{n}z,x^{\ast},x^{\ast})))\\ \notag %
\times&\psi\Set{\frac{\omega^{G}_{\lambda}(x^{\ast},Tx^{\ast},Tx^{\ast}) %
[1+\omega^{G}_{\lambda}(T^{n}z,T^{n+1}z,T^{n+1}z)]} %
{1+\omega^{G}_{\lambda}(T^{n}z,x^{\ast},x^{\ast})}}\\ \notag %
+&\kappa_{8}(\psi(\omega^{G}_{\lambda}(T^{n}z,x^{\ast},x^{\ast})))\\ \notag %
\times&\psi\Set{\frac{\omega^{G}_{\lambda}(T^{n}z,T^{n+1}z,T^{n+1}z) %
[1+\omega^{G}_{\lambda}(T^{n}z,x^{\ast},x^{\ast})]} %
{1+\omega^{G}_{\lambda}(x^{\ast},T^{n+1}z,T^{n+1}z) %
+\omega^{G}_{\lambda}(T^{n}z,x^{\ast},x^{\ast})}}\\ \notag %
+&\kappa_{9}(\psi(\omega^{G}_{\lambda}(T^{n}z,x^{\ast},x^{\ast})))\\ \notag %
\times&\psi\Set{\frac{\omega^{G}_{\lambda}(T^{n}z,x^{\ast},x^{\ast}) %
[1+\omega^{G}_{\lambda}(x^{\ast},Tx^{\ast},Tx^{\ast})]} %
{1+\omega^{G}_{\lambda}(T^{n+1}z,T^{n+1}z,x^{\ast}) %
+\omega^{G}_{\lambda}(T^{n+1}z,Tx^{\ast},Tx^{\ast})}}\\ \notag %
+&\kappa_{10}(\psi(\omega^{G}_{\lambda}(T^{n}z,x^{\ast},x^{\ast})))\\ \notag %
\times&\psi\Set{\frac{\omega^{G}_{\lambda}(T^{n+1}z,Tx^{\ast},Tx^{\ast}) %
[1+\omega^{G}_{\lambda}(T^{n}z,Tx^{\ast},Tx^{\ast})]} %
{1+\omega^{G}_{\lambda}(T^{n+1}z,T^{n+1}z,x^{\ast}) %
+\omega^{G}_{\lambda}(T^{n}z,Tx^{\ast},Tx^{\ast})}}\\ \notag %
+&\kappa_{11}(\psi(\omega^{G}_{\lambda}(T^{n}z,x^{\ast},x^{\ast})))\\ %
\times&\psi\Set{\frac{\omega^{G}_{\lambda}(T^{n+1}z,Tx^{\ast},Tx^{\ast}) %
[1+\omega^{G}_{\lambda}(x^{\ast},Tx^{\ast},Tx^{\ast}) %
+\omega^{G}_{\lambda}(T^{n+1}z,T^{n+1}z,x^{\ast})]} %
{1+\omega^{G}_{\lambda}(T^{n+1}z,T^{n+1}z,x^{\ast}) %
+\omega^{G}_{\lambda}(T^{n+1}z,Tx^{\ast},Tx^{\ast})}}, %
\end{align} %

using the fact that $Tx^{\ast}=x^{\ast}$, we get %

\begin{align}\label{B}%
\psi(\omega^{G}_{\lambda}(T^{n+1}z,x^{\ast},x^{\ast}))\notag %
\le&\kappa_{1}(\psi(\omega^{G}_{\lambda}(T^{n}z,x^{\ast},x^{\ast}))) %
\psi(\omega^{G}_{\lambda+\nu(\lambda)}(T^{n}z,x^{\ast},x^{\ast})) \notag %
+\kappa_{2}(\psi(\omega^{G}_{\lambda}(T^{n}z,x^{\ast},x^{\ast}))) %
\\\notag\times&\psi(\omega^{G}_{\lambda}(T^{n}z,T^{n+1}z,T^{n+1}z)) \notag %
+\kappa_{4}(\psi(\omega^{G}_{\lambda}(T^{n}z,x^{\ast},x^{\ast}))) %
\psi(\omega^{G}_{\lambda}(T^{n+1}z,x^{\ast},x^{\ast}))\\ \notag %
+&\kappa_{5}(\psi(\omega^{G}_{\lambda}(T^{n}z,x^{\ast},x^{\ast}))) %
\psi(\omega^{G}_{\lambda}(T^{n+1}z,x^{\ast},x^{\ast}))\notag %
+\kappa_{8}(\psi(\omega^{G}_{\lambda}(T^{n}z,x^{\ast},x^{\ast}))) %
\\\notag\times&\psi(\omega^{G}_{\lambda}(T^{n}z,T^{n+1}z,T^{n+1}z))\notag %
+\kappa_{9}(\psi(\omega^{G}_{\lambda}(T^{n}z,x^{\ast},x^{\ast}))) %
\psi(\omega_{\lambda}(T^{n}z,x^{\ast},x^{\ast}))\\\notag %
+&\kappa_{10}(\psi(\omega^{G}_{\lambda}(T^{n}z,x^{\ast},x^{\ast}))) %
\psi(\omega^{G}_{\lambda}(T^{n+1}z,x^{\ast},x^{\ast})) %
+\kappa_{11}(\psi(\omega^{G}_{\lambda}(T^{n}z,x^{\ast},x^{\ast}))) %
\\\times&\psi(\omega^{G}_{\lambda}( %
T^{n}z,x^{\ast},x^{\ast})). %
\end{align}%

Applying condition 6 of Proposition \ref{2.2}, we get %

\begin{align}%
\omega^{G}_{\lambda} %
(T^{n}z,T^{n+1}z,T^{n+1}z)\notag %
\le&\omega^{G}_{\frac{\lambda}{2}} %
(T^{n}z,x^{\ast},x^{\ast}) %
+\omega^{G}_{\frac{\lambda}{4}} %
(T^{n+1}z,x^{\ast},x^{\ast}) %
+\omega^{G}_{\frac{\lambda}{4}} %
(T^{n+1}z,x^{\ast},x^{\ast})\\\notag %
=&\omega^{G}_{\frac{\lambda}{2}} %
(T^{n}z,x^{\ast},x^{\ast}) %
+2\omega^{G}_{\frac{\lambda}{4}} %
(T^{n+1}z,x^{\ast},x^{\ast})\\ %
\le& \omega^{G}_{\lambda} %
(T^{n}z,x^{\ast},x^{\ast}) %
+2\omega^{G}_{\lambda} %
(T^{n+1}z,x^{\ast},x^{\ast}). %
\end{align}%

Therefore inequality \ref{B} and  %
noticing that $\psi$(.) is sub-additive, we have %
\begin{align}\label{}%
\psi(\omega^{G}_{\lambda}(T^{n+1}z,x^{\ast},x^{\ast}))\notag %
\le&\kappa_{1}(\psi(\omega^{G}_{\lambda}(T^{n}z,x^{\ast},x^{\ast}))) %
\psi(\omega^{G}_{\lambda}(T^{n}z,x^{\ast},x^{\ast}))\\ \notag %
+&\kappa_{2}(\psi(\omega^{G}_{\lambda}(T^{n}z,x^{\ast},x^{\ast}))) %
(\psi(\omega^{G}_{\lambda}(T^{n}z,x^{\ast},x^{\ast})) %
+2\omega^{G}_{\lambda}(T^{n+1}z,x^{\ast},x^{\ast}))\\ \notag %
+&\kappa_{4}(\psi(\omega^{G}_{\lambda}(T^{n}z,x^{\ast},x^{\ast}))) %
\psi(\omega^{G}_{\lambda}(T^{n+1}z,x^{\ast},x^{\ast}))\\ \notag %
+&\kappa_{5}(\psi(\omega^{G}_{\lambda}(T^{n}z,x^{\ast},x^{\ast}))) %
\psi(\omega^{G}_{\lambda}(T^{n+1}z,x^{\ast},x^{\ast}))\\\notag
+&\kappa_{8}(\psi(\omega^{G}_{\lambda}(T^{n}z,x^{\ast},x^{\ast}))) %
\psi(\omega^{G}_{\lambda}(T^{n}z,x^{\ast},x^{\ast}) %
+2\omega^{G}_{\lambda}(T^{n+1}z,x^{\ast},x^{\ast}))\\\notag %
+&\alpha_{9}(\psi(\omega^{G}_{\lambda}(T^{n}z,x^{\ast},x^{\ast}))) %
\psi(\omega_{\lambda}(T^{n}z,x^{\ast},x^{\ast}))\\\notag %
+&\kappa_{10}(\psi(\omega^{G}_{\lambda}(T^{n}z,x^{\ast},x^{\ast}))) %
\psi(\omega^{G}_{\lambda}(T^{n+1}z,x^{\ast},x^{\ast}))\\\notag %
+&\kappa_{11}(\psi(\omega^{G}_{\lambda}(T^{n}z,x^{\ast},x^{\ast}))) %
\psi(\omega^{G}_{\lambda}(T^{n}z,x^{\ast},x^{\ast}))\\\notag %
\le&\kappa_{1}(\psi(\omega^{G}_{\lambda}(T^{n}z,x^{\ast},x^{\ast}))) %
\psi(\omega^{G}_{\lambda}(T^{n}z,x^{\ast},x^{\ast}))\\ \notag %
+&\kappa_{2}(\psi(\omega^{G}_{\lambda}(T^{n}z,x^{\ast},x^{\ast}))) %
(\psi(\omega^{G}_{\lambda}(T^{n}z,x^{\ast},x^{\ast}))) %
+2\psi(\omega^{G}_{\lambda}(T^{n+1}z,x^{\ast},x^{\ast}))\\ \notag %
+&\kappa_{4}(\psi(\omega^{G}_{\lambda}(T^{n}z,x^{\ast},x^{\ast}))) %
\psi(\omega^{G}_{\lambda}(T^{n+1}z,x^{\ast},x^{\ast}))\\ \notag %
+&\kappa_{5}(\psi(\omega^{G}_{\lambda}(T^{n}z,x^{\ast},x^{\ast})))
\psi(\omega^{G}_{\lambda}(T^{n+1}z,x^{\ast},x^{\ast}))\\\notag %
+&\kappa_{8}(\psi(\omega^{G}_{\lambda}(T^{n}z,x^{\ast},x^{\ast}))) %
(\psi(\omega^{G}_{\lambda}(T^{n}z,x^{\ast},x^{\ast})) %
+2\psi(\omega^{G}_{\lambda}(T^{n+1}z,x^{\ast},x^{\ast})))\\\notag %
+&\kappa_{9}(\psi(\omega^{G}_{\lambda}(T^{n}z,x^{\ast},x^{\ast}))) %
\psi(\omega_{\lambda}(T^{n}z,x^{\ast},x^{\ast}))\\\notag %
+&\kappa_{10}(\psi(\omega^{G}_{\lambda}(T^{n}z,x^{\ast},x^{\ast}))) %
\psi(\omega^{G}_{\lambda}(T^{n+1}z,x^{\ast},x^{\ast}))\\\notag %
+&\kappa_{11}(\psi(\omega^{G}_{\lambda}(T^{n}z,x^{\ast},x^{\ast}))) %
\psi(\omega^{G}_{\lambda}(T^{n}z,x^{\ast},x^{\ast}))\\\notag %
=&\kappa_{1}(\psi(\omega^{G}_{\lambda}(T^{n}z,x^{\ast},x^{\ast}))) %
\psi(\omega^{G}_{\lambda}(T^{n}z,x^{\ast},x^{\ast}))\\ \notag %
+&\kappa_{2}(\psi(\omega^{G}_{\lambda}(T^{n}z,x^{\ast},x^{\ast}))) %
\psi(\omega^{G}_{\lambda}(T^{n}z,x^{\ast},x^{\ast}))\\\notag %
+&2\kappa_{2}(\psi(\omega^{G}_{\lambda}(T^{n}z,x^{\ast},x^{\ast}))) %
\psi(\omega^{G}_{\lambda}(T^{n+1}z,x^{\ast},x^{\ast}))\\ \notag %
+&\kappa_{4}(\psi(\omega^{G}_{\lambda}(T^{n}z,x^{\ast},x^{\ast}))) %
\psi(\omega^{G}_{\lambda}(T^{n+1}z,x^{\ast},x^{\ast}))\\ \notag %
+&\kappa_{5}(\psi(\omega^{G}_{\lambda}(T^{n}z,x^{\ast},x^{\ast}))) %
\psi(\omega^{G}_{\lambda}(T^{n+1}z,x^{\ast},x^{\ast}))\\\notag %
+&\kappa_{8}(\psi(\omega^{G}_{\lambda}(T^{n}z,x^{\ast},x^{\ast}))) %
\psi(\omega^{G}_{\lambda}(T^{n}z,x^{\ast},x^{\ast}))\\\notag %
+&2\kappa_{8}(\psi(\omega^{G}_{\lambda}(T^{n}z,x^{\ast},x^{\ast}))) %
\psi(\omega^{G}_{\lambda}(T^{n+1}z,x^{\ast},x^{\ast}))\\\notag %
+&\kappa_{9}(\psi(\omega^{G}_{\lambda}(T^{n}z,x^{\ast},x^{\ast}))) %
\psi(\omega_{\lambda}(T^{n}z,x^{\ast},x^{\ast}))\\\notag %
+&\kappa_{10}(\psi(\omega^{G}_{\lambda}(T^{n}z,x^{\ast},x^{\ast}))) %
\psi(\omega^{G}_{\lambda}(T^{n+1}z,x^{\ast},x^{\ast}))\\ %
+&\kappa_{11}(\psi(\omega^{G}_{\lambda}(T^{n}z,x^{\ast},x^{\ast}))) %
\psi(\omega^{G}_{\lambda}(T^{n}z,x^{\ast},x^{\ast})), %
\end{align}%

so that	%
		
\begin{eqnarray}%
&&\bigg(1-\bigg(2\kappa_{2}(\psi(\omega^{G}_{\lambda} %
(T^{n}z,x^{\ast},x^{\ast})))\notag %
+\kappa_{4}(\psi(\omega^{G}_{\lambda} %
(T^{n}z,x^{\ast},x^{\ast})))\notag %
+\kappa_{5}(\psi(\omega^{G}_{\lambda} %
(T^{n}z,x^{\ast},x^{\ast})))\\ %
&&+2\kappa_{8}(\psi(\omega^{G}_{\lambda} %
(T^{n}z,x^{\ast},x^{\ast})))\notag %
+\kappa_{10}(\psi(\omega^{G}_{\lambda} %
(T^{n}z,x^{\ast},x^{\ast})))\bigg)\bigg)\notag %
\psi(\omega^{G}_{\lambda} %
(T^{n+1}z, x^{\ast},x^{\ast}))\\\notag %
&&\le\notag %
\bigg(\kappa_{1}(\psi(\omega^{G}_{\lambda} %
(T^{n},x^{\ast},x^{\ast})))\notag %
+\kappa_{2}(\psi(\omega^{G}_{\lambda} %
(T^{n}z,x^{\ast},x^{\ast})))\notag %
+\kappa_{8}(\psi(\omega^{G}_{\lambda} %
(T^{n}z,x^{\ast},x^{\ast})))\\ %
&&+\kappa_{9}(\psi(\omega^{G}_{\lambda} %
(T^{n}z,x^{\ast},x^{\ast}))) %
+\kappa_{11}(\psi(\omega^{G}_{\lambda} %
(T^{n}z,x^{\ast},x^{\ast})))\bigg) %
\psi(\omega^{G}_{\lambda} %
(T^{n}z,x^{\ast},x^{\ast})). %
\end{eqnarray}%
		
Therefore, %

\begin{align}%
\psi(\omega^{G}_{\lambda} %
(T^{n+1}z,x^{\ast},x^{\ast}))\notag %
\le& \delta\psi(\omega^{G}_{\lambda} %
(T^{n}z,x^{\ast},x^{\ast}))\\\notag %
\le&\psi(\omega^{G}_{\lambda} %
(T^{n}z,x^{\ast},x^{\ast}))\\\notag %
\le&\psi(\omega^{G}_{\lambda} %
(T^{n-1}z,x^{\ast},x^{\ast}))\\\notag %
\vdots&\\\notag %
\le&\psi(\omega^{G}_{\lambda} %
(z,x^{\ast},x^{\ast}))\\ %
< &\infty, %
\end{align}

where $\delta=\dfrac{\delta_{1}}{\delta_{2}}.$ %

\begin{eqnarray}%
\delta_{1}:=&& 1-\bigg(2\kappa_{2}( %
\psi(\omega^{G}_{\lambda} %
(T^{n}z,x^{\ast},x^{\ast})))\notag %
+\kappa_{4}(\psi(\omega^{G}_{\lambda} %
(T^{n}z,x^{\ast},x^{\ast})))\notag %
+\kappa_{5}(\psi(\omega^{G}_{\lambda} %
(T^{n}z,x^{\ast},x^{\ast})))\\ %
&&+2\kappa_{8}(\psi(\omega^{G}_{\lambda} %
(T^{n}z,x^{\ast},x^{\ast}))) %
+\kappa_{10}(\psi(\omega^{G}_{\lambda} %
(T^{n}z,x^{\ast},x^{\ast})))\bigg), %
\end{eqnarray}%

and%

\begin{eqnarray}%
\delta_{2}:=&&\bigg(\kappa_{1}(\psi(\omega^{G}_{\lambda} %
(T^{n},x^{\ast},x^{\ast})))\notag %
+\kappa_{2}(\psi(\omega^{G}_{\lambda} %
(T^{n}z,x^{\ast},x^{\ast})))\notag %
+\kappa_{8}(\psi(\omega^{G}_{\lambda} %
(T^{n}z,x^{\ast},x^{\ast})))\\ %
&&+\kappa_{9}(\psi(\omega^{G}_{\lambda} %
(T^{n}z,x^{\ast},x^{\ast}))) %
+\kappa_{11}(\psi(\omega^{G}_{\lambda} %
(T^{n}z,x^{\ast},x^{\ast})))\bigg). %
\end{eqnarray}%
		
Therefore, $\{\psi(\omega^{G}_{\lambda} %
(T^{n+1}z,x^{\ast},x^{\ast}))\}_{n\ge 1}$ %
is non-increasing sequence and bounded below. %
 So, it converges to some real number $\beta\ge 0$ (say). %
For $\beta>0$. Recall that $\lim\limits_{n\rightarrow\infty} %
\omega^{G}_{\lambda}(T^{n}x_{0},T^{n+1}x_{0},T^{n+1}x_{0})=0 $ %
for all $\lambda>0$, that is $\{T^{n}x_{0}\}_{n\in\mathbb{N}}$ %
is asymptotically $T$-regular on some point $x_{0}\in %
X_{\omega^{G}}$, Thus, letting $n\rightarrow \infty$  %
in the inequality \ref{B}, %
and using the fact that $\psi(\omega^{G}_{\lambda} %
(T^{n}z,x^{\ast},x^{\ast}))\leq \psi(\omega^{G}_{\lambda} %
(T^{n+1}z,x^{\ast},x^{\ast}))$, we have  that %
\begin{align}%
1\notag &\le\lim\limits_{n\rightarrow\infty} %
\inf\bigg(\kappa_{1}(\psi(\omega^{G}_{\lambda} %
(T^{n}z,x^{\ast},x^{\ast})))\notag %
+\kappa_{4}(\psi(\omega^{G}_{\lambda} %
(T^{n}z,x^{\ast},x^{\ast})))\\\notag %
&+\kappa_{5}(\psi(\omega^{G}_{\lambda} %
(T^{n}z,x^{\ast},x^{\ast})))\notag %
+\kappa_{9}(\psi(\omega^{G}_{\lambda} %
(T^{n}z,x^{\ast},x^{\ast})))\\\notag %
&+\kappa_{10}(\psi(\omega^{G}_{\lambda} %
(T^{n}z,x^{\ast},x^{\ast}))) %
+\kappa_{11}(\psi(\omega^{G}_{\lambda} %
(T^{n}z,x^{\ast},x^{\ast})))\bigg). %
\end{align}%

Therefore, $\lim\limits_{n\rightarrow\infty}  %
\psi(\omega^{G}_{\lambda}(T^{n}z,x^{\ast},x^{\ast}))=0$,  %
for which by condition 4 of Proposition %
\ref{2.3}, we have  that %

\begin{equation}%
\lim\limits_{n\rightarrow\infty} %
\omega^{G}_{\lambda}(T^{n}z,x^{\ast},x^{\ast}))=0  %
~\forall~\lambda>0. %
\end{equation}%

 Hence $T^{n}z\rightarrow x^{\ast}$ as %
  $n\rightarrow\infty.$ %
		
Again, $T^{n}z$ is also comparable with %
$y^{\ast}$ for each $n\in\mathbb{N}$. %
Now for $\lambda>0$, %
using inequality \ref{a}, we have by  %
taking $x=T^{n}z$ and $y=y^{\ast}.$ %
Again, consider $\psi(\omega^{G}_{ %
\lambda}(T^{n+1}z,y^{\ast},y^{\ast}))<\infty,$ %
so that we have the following %

\begin{align}%
\psi(\omega^{G}_{\lambda} %
(T^{n+1}z,y^{\ast},y^{\ast}))\notag %
=&\psi(\omega_{\lambda} %
(T^{n+1}z,Ty^{\ast},Ty^{\ast}))\\\notag %
=&\psi(\omega^{G}_{\lambda} %
(TT^{n}z,Ty^{\ast},Ty^{\ast}))\\\notag %
\le&\kappa_{1}(\psi(\omega^{G}_{\lambda} %
(T^{n}z,y^{\ast},y^{\ast}))) %
\psi(\omega^{G}_{\lambda+\nu(\lambda)} %
(T^{n}z,y^{\ast},y^{\ast}))\\ \notag %
+&\kappa_{2}(\psi(\omega^{G}_{\lambda} %
(T^{n}z,y^{\ast},y^{\ast}))) %
\psi(\omega^{G}_{\lambda} %
(T^{n}z,TT^{n}z,TT^{n}z))\\ \notag %
+&\kappa_{3}(\psi(\omega^{G}_{\lambda} %
(T^{n}z,y^{\ast},y^{\ast}))) %
\psi(\omega^{G}_{\lambda} %
(y^{\ast},Ty^{\ast},Ty^{\ast}))\\ \notag %
+&\kappa_{4}(\psi(\omega^{G}_{\lambda} %
(T^{n}z,y^{\ast},y^{\ast}))) %
\psi(\omega^{G}_{\lambda} %
(TT^{n}z,Ty^{\ast},Ty^{\ast}))\\ \notag %
+&\kappa_{5}(\psi(\omega^{G}_{\lambda} %
(T^{n}z,y^{\ast},y^{\ast}))) %
\psi(\omega^{G}_{\lambda} %
(TT^{n}z,y^{\ast},y^{\ast}))\\\notag %
+&\kappa_{6}(\psi(\omega^{G}_{\lambda} %
(T^{n}z,y^{\ast},y^{\ast})))\\ \notag %
\times&\psi\Set{\frac{\omega^{G}_{\lambda} %
(y^{\ast},Ty^{\ast},Ty^{\ast}) %
[1+\omega^{G}_{\lambda} %
(T^{n}z,y^{\ast},y^{\ast})]} %
{1+\omega^{G}_{\lambda} %
(T^{n}z,TT^{n}z,T^{n}z)}}\\ \notag %
+&\kappa_{7}(\psi(\omega^{G}_{\lambda} %
(T^{n}z,y^{\ast},y^{\ast})))\\ \notag %
\times&\psi\Set{\frac{\omega^{G}_{\lambda} %
(y^{\ast},Ty^{\ast},Ty^{\ast}) %
[1+\omega^{G}_{\lambda}(T^{n}z,TT^{n}z,TT^{n}z)]} %
{1+\omega^{G}_{\lambda} %
(T^{n}z,y^{\ast},y^{\ast})}}\\ \notag %
+&\kappa_{8}(\psi(\omega^{G}_{\lambda} %
(T^{n}z,y^{\ast},y^{\ast})))\\ \notag %
\times&\psi\Set{\frac{\omega^{G}_{\lambda} %
(T^{n}z,TT^{n}z,TT^{n}z) %
[1+\omega^{G}_{\lambda}(T^{n}z,y^{\ast},y^{\ast})]} %
{1+\omega^{G}_{\lambda}(y^{\ast},TT^{n}z,TT^{n}z) %
+\omega^{G}_{\lambda}(T^{n}z,y^{\ast},y^{\ast})}}\\ \notag %
+&\kappa_{9}(\psi(\omega^{G}_{\lambda} %
(T^{n}z,y^{\ast},y^{\ast})))\\ \notag %
\times&\psi\Set{\frac{\omega^{G}_{\lambda} %
(T^{n}z,y^{\ast},y^{\ast})  %
[1+\omega^{G}_{\lambda}(y^{\ast},Ty^{\ast},Ty^{\ast})]} %
{1+\omega^{G}_{\lambda}(TT^{n}z,TT^{n}z,y^{\ast}) %
+\omega^{G}_{\lambda}(TT^{n}z,Ty^{\ast},Ty^{\ast})}}\\ \notag %
+&\kappa_{10}(\psi(\omega^{G}_{\lambda} %
(T^{n}z,y^{\ast},y^{\ast})))\\ \notag %
\times&\psi\Set{\frac{\omega^{G}_{\lambda} %
(TT^{n}z,Ty^{\ast},Ty^{\ast}) %
[1+\omega^{G}_{\lambda}(T^{n}z,Ty^{\ast},Ty^{\ast})]} %
{1+\omega^{G}_{\lambda}(TT^{n}z,TT^{n}z,y^{\ast}) %
+\omega^{G}_{\lambda}(T^{n}z,Ty^{\ast},Ty^{\ast})}}\\ \notag %
+&\kappa_{11}(\psi(\omega^{G}_{\lambda} %
(T^{n}z,y^{\ast},y^{\ast})))\\%
\times&\psi\Set{\frac{\omega^{G}_{\lambda}( %
TT^{n}z,Ty^{\ast},Ty^{\ast}) %
[1+\omega^{G}_{\lambda}(y^{\ast},Ty^{\ast},Ty^{\ast})
+\omega^{G}_{\lambda}(TT^{n}z,TT^{n}z,y^{\ast})]} %
{1+\omega^{G}_{\lambda}(TT^{n}z,TT^{n}z,y^{\ast}) %
+\omega^{G}_{\lambda}(TT^{n}z,Ty^{\ast},Ty^{\ast})}}, %
\end{align}%

so that %

\begin{align}%
\psi(\omega^{G}_{\lambda}(T^{n+1}z,y^{\ast},y^{\ast}))\notag %
\le&\kappa_{1}(\psi(\omega^{G}_{\lambda}(T^{n}z,y^{\ast},y^{\ast}))) %
\psi(\omega^{G}_{\lambda+\nu(\lambda)}(T^{n}z,y^{\ast},y^{\ast}))\\ \notag %
+&\kappa_{2}(\psi(\omega^{G}_{\lambda}(T^{n}z,y^{\ast},y^{\ast}))) %
\psi(\omega^{G}_{\lambda}(T^{n}z,T^{n+1}z,T^{n+1}z))\\ \notag %
+&\kappa_{3}(\psi(\omega^{G}_{\lambda}(T^{n}z,y^{\ast},y^{\ast}))) %
\psi(\omega^{G}_{\lambda}(y^{\ast},Ty^{\ast},Ty^{\ast}))\\ \notag %
+&\kappa_{4}(\psi(\omega^{G}_{\lambda}(T^{n}z,y^{\ast},y^{\ast}))) %
\psi(\omega^{G}_{\lambda}(T^{n+1}z,Ty^{\ast},Ty^{\ast}))\\ \notag %
+&\kappa_{5}(\psi(\omega^{G}_{\lambda}(T^{n}z,y^{\ast},y^{\ast}))) %
\psi(\omega^{G}_{\lambda}(T^{n+1}z,y^{\ast},y^{\ast}))\\\notag %
+&\kappa_{6}(\psi(\omega^{G}_{\lambda}(T^{n}z,y^{\ast},y^{\ast})))\\ \notag %
\times&\psi\Set{\frac{\omega^{G}_{\lambda}(y^{\ast},Ty^{\ast},Ty^{\ast}) %
[1+\omega^{G}_{\lambda}(T^{n}z,y^{\ast},y^{\ast})]}{1 %
+\omega^{G}_{\lambda}(T^{n}z,T^{n+1}z,T^{n}z)}}\\ \notag %
+&\kappa_{7}(\psi(\omega^{G}_{\lambda}(T^{n}z,y^{\ast},y^{\ast})))\\ \notag %
\times&\psi\Set{\frac{\omega^{G}_{\lambda}(y^{\ast},Ty^{\ast},Ty^{\ast}) %
[1+\omega^{G}_{\lambda}(T^{n}z,T^{n+1}z,T^{n+1}z)]}{1 %
+\omega^{G}_{\lambda}(T^{n}z,y^{\ast},y^{\ast})}}\\ \notag %
+&\kappa_{8}(\psi(\omega^{G}_{\lambda}(T^{n}z,y^{\ast},y^{\ast})))\\ \notag %
\times&\psi\Set{\frac{\omega^{G}_{\lambda}(T^{n}z,T^{n+1}z,T^{n+1}z) %
[1+\omega^{G}_{\lambda}(T^{n}z,y^{\ast},y^{\ast})]} %
{1+\omega^{G}_{\lambda}(y^{\ast},T^{n+1}z,T^{n+1}z) %
+\omega^{G}_{\lambda}(T^{n}z,y^{\ast},y^{\ast})}}\\ \notag %
+&\kappa_{9}(\psi(\omega^{G}_{\lambda}(T^{n}z,y^{\ast},y^{\ast})))\\ \notag %
\times&\psi\Set{\frac{\omega^{G}_{\lambda}(T^{n}z,y^{\ast},y^{\ast}) %
[1+\omega^{G}_{\lambda}(y^{\ast},Ty^{\ast},Ty^{\ast})]} %
{1+\omega^{G}_{\lambda}(T^{n+1}z,T^{n+1}z,y^{\ast}) %
+\omega^{G}_{\lambda}(T^{n+1}z,Ty^{\ast},Ty^{\ast})}}\\\notag %
+&\kappa_{10}(\psi(\omega^{G}_{\lambda}(T^{n}z,y^{\ast},y^{\ast})))\\ \notag %
\times&\psi\Set{\frac{\omega^{G}_{\lambda}(T^{n+1}z,Ty^{\ast},Ty^{\ast}) %
[1+\omega^{G}_{\lambda}(T^{n}z,Ty^{\ast},Ty^{\ast})]} %
{1+\omega^{G}_{\lambda}(T^{n+1}z,T^{n+1}z,y^{\ast}) %
+\omega^{G}_{\lambda}(T^{n}z,Ty^{\ast},Ty^{\ast})}}\\ \notag %
+&\kappa_{11}(\psi(\omega^{G}_{\lambda}(T^{n}z,y^{\ast},y^{\ast})))\\ %
\times&\psi\Set{\frac{\omega^{G}_{\lambda}(T^{n+1}z,Ty^{\ast},Ty^{\ast}) %
[1+\omega^{G}_{\lambda}(y^{\ast},Ty^{\ast},Ty^{\ast}) %
+\omega^{G}_{\lambda}(T^{n+1}z,T^{n+1}z,y^{\ast})]} %
{1+\omega^{G}_{\lambda}(T^{n+1}z,T^{n+1}z,y^{\ast}) %
+\omega^{G}_{\lambda}(T^{n+1}z,Ty^{\ast},Ty^{\ast})}}, %
\end{align}%
	
using the fact that $Ty^{\ast}=y^{\ast}$, we get %

\begin{align}\label{C}%
\psi(\omega^{G}_{\lambda}(T^{n+1}z,y^{\ast},y^{\ast}))\notag %
\le&\kappa_{1}(\psi(\omega^{G}_{\lambda}(T^{n}z,y^{\ast},y^{\ast}))) %
\psi(\omega^{G}_{\lambda+\nu(\lambda)}(T^{n}z,y^{\ast},y^{\ast}))\\ \notag %
+&\kappa_{2}(\psi(\omega^{G}_{\lambda}(T^{n}z,y^{\ast},y^{\ast}))) %
\psi(\omega^{G}_{\lambda}(T^{n}z,T^{n+1}z,T^{n+1}z))\\\notag %
+&\kappa_{4}(\psi(\omega^{G}_{\lambda}(T^{n}z,y^{\ast},y^{\ast}))) %
\psi(\omega^{G}_{\lambda}(T^{n+1}z,y^{\ast},y^{\ast}))\\ \notag %
+&\kappa_{5}(\psi(\omega^{G}_{\lambda}(T^{n}z,y^{\ast},y^{\ast}))) %
\psi(\omega^{G}_{\lambda}(T^{n+1}z,y^{\ast},y^{\ast}))\\\notag %
+&\kappa_{8}(\psi(\omega^{G}_{\lambda}(T^{n}z,y^{\ast},y^{\ast}))) %
\psi(\omega^{G}_{\lambda}(T^{n}z,T^{n+1}z,T^{n+1}z))\\\notag %
+&\kappa_{9}(\psi(\omega^{G}_{\lambda}(T^{n}z,y^{\ast},y^{\ast}))) %
\psi(\omega_{\lambda}(T^{n}z,y^{\ast},y^{\ast}))\\\notag %
+&\kappa_{10}(\psi(\omega^{G}_{\lambda}(T^{n}z,y^{\ast},y^{\ast}))) %
\psi(\omega^{G}_{\lambda}(T^{n+1}z,y^{\ast},y^{\ast}))\\ %
+&\kappa_{11}(\psi(\omega^{G}_{\lambda}(T^{n}z,y^{\ast},y^{\ast}))) %
\psi(\omega^{G}_{\lambda}(T^{n}z,y^{\ast},y^{\ast})). %
\end{align}%

Now, applying condition 6 of Proposition \ref{2.2}, we get %

\begin{align}%
\omega^{G}_{\lambda}(T^{n}z,T^{n+1}z,T^{n+1}z)\notag  %
\le&\omega^{G}_{\frac{\lambda}{2}}(T^{n}z,y^{\ast},y^{\ast}) %
+\omega^{G}_{\frac{\lambda}{4}}(T^{n+1}z,y^{\ast},y^{\ast}) %
+\omega^{G}_{\frac{\lambda}{4}}(T^{n+1}z,y^{\ast},y^{\ast})\\\notag %
=&\omega^{G}_{\frac{\lambda}{2}}(T^{n}z,y^{\ast},y^{\ast}) %
+2\omega^{G}_{\frac{\lambda}{4}}(T^{n+1}z,y^{\ast},y^{\ast})\\ %
\le& \omega^{G}_{\lambda}(T^{n}z,y^{\ast},y^{\ast}) %
+2\omega^{G}_{\lambda}(T^{n+1}z,y^{\ast},y^{\ast}). %
\end{align}%

Therefore, from inequality \ref{C} and  %
that $\psi$(.) is sub-additive, we have %
	
\begin{align}\label{}
\psi(\omega^{G}_{\lambda}(T^{n+1}z,y^{\ast},y^{\ast}))\notag %
\le&\kappa_{1}(\psi(\omega^{G}_{\lambda}(T^{n}z,y^{\ast},y^{\ast}))) %
\psi(\omega^{G}_{\lambda}(T^{n}z,y^{\ast},y^{\ast}))\\ \notag %
+&\kappa_{2}(\psi(\omega^{G}_{\lambda}(T^{n}z,y^{\ast},y^{\ast}))) %
(\psi(\omega^{G}_{\lambda}(T^{n}z,y^{\ast},y^{\ast})) %
+2\omega^{G}_{\lambda}(T^{n+1}z,y^{\ast},y^{\ast}))\\ \notag %
+&\kappa_{4}(\psi(\omega^{G}_{\lambda}(T^{n}z,y^{\ast},y^{\ast}))) %
\psi(\omega^{G}_{\lambda}(T^{n+1}z,y^{\ast},y^{\ast}))\\ \notag %
+&\kappa_{5}(\psi(\omega^{G}_{\lambda}(T^{n}z,y^{\ast},y^{\ast}))) %
\psi(\omega^{G}_{\lambda}(T^{n+1}z,y^{\ast},y^{\ast}))\\\notag %
+&\kappa_{8}(\psi(\omega^{G}_{\lambda}(T^{n}z,y^{\ast},y^{\ast}))) %
\psi(\omega^{G}_{\lambda}(T^{n}z,y^{\ast},y^{\ast}) %
+2\omega^{G}_{\lambda}(T^{n+1}z,y^{\ast},y^{\ast}))\\\notag %
+&\kappa_{9}(\psi(\omega^{G}_{\lambda}(T^{n}z,y^{\ast},y^{\ast}))) %
\psi(\omega_{\lambda}(T^{n}z,y^{\ast},y^{\ast})) %
\\\notag %
+&\kappa_{10}(\psi(\omega^{G}_{\lambda}(T^{n}z,y^{\ast},y^{\ast}))) %
\psi(\omega^{G}_{\lambda}(T^{n+1}z,y^{\ast},y^{\ast}))\\\notag %
+&\kappa_{11}(\psi(\omega^{G}_{\lambda}(T^{n}z,y^{\ast},y^{\ast}))) %
\psi(\omega^{G}_{\lambda}(T^{n}z,y^{\ast},y^{\ast}))\\\notag %
\le&\kappa_{1}(\psi(\omega^{G}_{\lambda}(T^{n}z,y^{\ast},y^{\ast}))) %
\psi(\omega^{G}_{\lambda}(T^{n}z,y^{\ast},y^{\ast}))\\ \notag %
+&\kappa_{2}(\psi(\omega^{G}_{\lambda}(T^{n}z,y^{\ast},y^{\ast}))) %
(\psi(\omega^{G}_{\lambda}(T^{n}z,y^{\ast},y^{\ast}))) %
+2\psi(\omega^{G}_{\lambda}(T^{n+1}z,y^{\ast},y^{\ast}))\\ \notag %
+&\kappa_{4}(\psi(\omega^{G}_{\lambda}(T^{n}z,y^{\ast},y^{\ast}))) %
\psi(\omega^{G}_{\lambda}(T^{n+1}z,y^{\ast},y^{\ast}))\\ \notag %
+&\kappa_{5}(\psi(\omega^{G}_{\lambda}(T^{n}z,y^{\ast},y^{\ast}))) %
\psi(\omega^{G}_{\lambda}(T^{n+1}z,y^{\ast},y^{\ast}))\\\notag %
+&\kappa_{8}(\psi(\omega^{G}_{\lambda}(T^{n}z,y^{\ast},y^{\ast}))) %
(\psi(\omega^{G}_{\lambda}(T^{n}z,y^{\ast},y^{\ast})) %
+2\psi(\omega^{G}_{\lambda}(T^{n+1}z,y^{\ast},y^{\ast})))\\\notag %
+&\kappa_{9}(\psi(\omega^{G}_{\lambda}(T^{n}z,y^{\ast},y^{\ast}))) %
\psi(\omega_{\lambda}(T^{n}z,y^{\ast},y^{\ast})) %
\\\notag%
+&\kappa_{10}(\psi(\omega^{G}_{\lambda}(T^{n}z,y^{\ast},y^{\ast}))) %
\psi(\omega^{G}_{\lambda}(T^{n+1}z,y^{\ast},y^{\ast}))\\\notag %
+&\kappa_{11}(\psi(\omega^{G}_{\lambda}(T^{n}z,y^{\ast},y^{\ast}))) %
\psi(\omega^{G}_{\lambda}(T^{n}z,y^{\ast},y^{\ast}))\\\notag %
=&\kappa_{1}(\psi(\omega^{G}_{\lambda}(T^{n}z,y^{\ast},y^{\ast}))) %
\psi(\omega^{G}_{\lambda}(T^{n}z,y^{\ast},y^{\ast}))\\\notag %
+&\kappa_{2}(\psi(\omega^{G}_{\lambda}(T^{n}z,y^{\ast},y^{\ast}))) %
\psi(\omega^{G}_{\lambda}(T^{n}z,y^{\ast},y^{\ast}))\\\notag %
+&2\kappa_{2}(\psi(\omega^{G}_{\lambda}(T^{n}z,y^{\ast},y^{\ast}))) %
\psi(\omega^{G}_{\lambda}(T^{n+1}z,y^{\ast},y^{\ast})\\ \notag %
+&\kappa_{4}(\psi(\omega^{G}_{\lambda}(T^{n}z,y^{\ast},y^{\ast}))) %
\psi(\omega^{G}_{\lambda}(T^{n+1}z,y^{\ast},y^{\ast}))\\ \notag %
+&\kappa_{5}(\psi(\omega^{G}_{\lambda}(T^{n}z,y^{\ast},y^{\ast}))) %
\psi(\omega^{G}_{\lambda}(T^{n+1}z,y^{\ast},y^{\ast}))\\\notag %
+&\kappa_{8}(\psi(\omega^{G}_{\lambda}(T^{n}z,y^{\ast},y^{\ast}))) %
\psi(\omega^{G}_{\lambda}(T^{n}z,y^{\ast},y^{\ast}))\\\notag %
+&2\kappa_{8}(\psi(\omega^{G}_{\lambda}(T^{n}z,y^{\ast},y^{\ast}))) %
\psi(\omega^{G}_{\lambda}(T^{n+1}z,y^{\ast},y^{\ast}))\\\notag %
+&\kappa_{9}(\psi(\omega^{G}_{\lambda}(T^{n}z,y^{\ast},y^{\ast}))) %
\psi(\omega_{\lambda}(T^{n}z,y^{\ast},y^{\ast})) %
\\\notag%
+&\kappa_{10}(\psi(\omega^{G}_{\lambda}(T^{n}z,y^{\ast},y^{\ast}))) %
\psi(\omega^{G}_{\lambda}(T^{n+1}z,y^{\ast},y^{\ast}))\\ %
+&\kappa_{11}(\psi(\omega^{G}_{\lambda}(T^{n}z,y^{\ast},y^{\ast}))) %
\psi(\omega^{G}_{\lambda}(T^{n}z,y^{\ast},y^{\ast})), %
\end{align}%

so that %
	
\begin{eqnarray}%
&&\bigg(1-\bigg(2\kappa_{2}(\psi(\omega^{G}_{ %
\lambda}(T^{n}z,y^{\ast},y^{\ast})))\notag %
+\kappa_{4}(\psi(\omega^{G}_{\lambda} %
(T^{n}z,y^{\ast},y^{\ast})))\notag %
+\kappa_{5}(\psi(\omega^{G}_{\lambda} %
(T^{n}z,y^{\ast},y^{\ast})))\\ %
&&+2\kappa_{8}(\psi(\omega^{G}_{\lambda} %
(T^{n}z,y^{\ast},y^{\ast})))\notag %
+\kappa_{10}(\psi(\omega^{G}_{\lambda} %
(T^{n}z,y^{\ast},y^{\ast})))\bigg)\bigg)\notag %
\psi(\omega^{G}_{\lambda}(T^{n+1}z, %
 y^{\ast},y^{\ast}))\\\notag %
\end{eqnarray}%

\begin{eqnarray}%
\le\notag %
&&\bigg(\kappa_{1}(\psi( %
\omega^{G}_{\lambda}(T^{n}, %
y^{\ast},y^{\ast})))\notag %
+\kappa_{2}(\psi(\omega^{G}_{ %
\lambda}(T^{n}z,y^{\ast},y^{\ast})))\notag %
+\kappa_{8}(\psi(\omega^{G}_{ %
\lambda}(T^{n}z,y^{\ast},y^{\ast})))\\ %
&&+\kappa_{9}(\psi(\omega^{G}_{ %
\lambda}(T^{n}z,y^{\ast},y^{\ast}))) %
+\kappa_{11}(\psi(\omega^{G}_{ %
\lambda}(T^{n}z,y^{\ast},y^{\ast})))\bigg) %
\psi(\omega^{G}_{\lambda}(T^{n}z, %
y^{\ast},y^{\ast})). %
\end{eqnarray}%
	
Therefore, %

\begin{align} %
\psi(\omega^{G}_{\lambda}( %
T^{n+1}z,y^{\ast},y^{\ast}))\notag %
\le& \tau\psi(\omega^{G}_{ %
\lambda}(T^{n}z,y^{\ast},y^{\ast}))\\\notag %
\le&\psi(\omega^{G}_{\lambda} %
(T^{n}z,y^{\ast},y^{\ast}))\\\notag %
\le&\psi(\omega^{G}_{\lambda} %
(T^{n-1}z,y^{\ast},y^{\ast}))\\\notag %
\vdots&\\\notag %
\le&\psi(\omega^{G}_{\lambda}(z,y^{\ast},y^{\ast}))\\ %
< &\infty, %
\end{align}%

where $\tau=\dfrac{\tau_{1}}{\tau_{2}}.$ %

\begin{eqnarray}%
\tau_{1}:=&& 1-\bigg(2\kappa_{2}( %
\psi(\omega^{G}_{\lambda}(T^{n}z, %
y^{\ast},y^{\ast})))\notag %
+\kappa_{4}(\psi(\omega^{G}_{ %
\lambda}(T^{n}z,y^{\ast},y^{\ast})))\notag %
+\kappa_{5}(\psi(\omega^{G}_{ %
\lambda}(T^{n}z,y^{\ast},y^{\ast})))\\ %
&&+2\kappa_{8}(\psi(\omega^{G}_{ %
\lambda}(T^{n}z,y^{\ast},y^{\ast})))\notag %
+\kappa_{10}(\psi(\omega^{G}_{ %
\lambda}(T^{n}z,y^{\ast},y^{\ast})))\bigg), %
\end{eqnarray}%

and %

\begin{eqnarray} %
\tau_{2}:=&&\bigg(\kappa_{1}( %
\psi(\omega^{G}_{\lambda}(T^{n}, %
y^{\ast},y^{\ast})))\notag %
+\kappa_{2}(\psi(\omega^{G}_{ %
\lambda}(T^{n}z,y^{\ast},y^{\ast})))\notag %
+\kappa_{8}(\psi(\omega^{G}_{ %
\lambda}(T^{n}z,y^{\ast},y^{\ast})))\\ %
&&+\kappa_{9}(\psi(\omega^{G}_{ %
\lambda}(T^{n}z,y^{\ast},y^{\ast}))) %
+\kappa_{11}(\psi(\omega^{G}_{ %
\lambda}(T^{n}z,y^{\ast},y^{\ast})))\bigg). %
\end{eqnarray}%
	
Therefore, $\{\psi(\omega^{G}_{\lambda} %
(T^{n+1}z,y^{\ast},y^{\ast}))\}_{n\ge 1}$ %
is non-increasing sequence and bounded below. %
So, it  converges to some real number  %
$\sigma\ge 0$ (say). For $\sigma>0$. Recall that  %
$\lim\limits_{n\rightarrow\infty}\omega^{G}_{ %
\lambda}(T^{n}x_{0},T^{n+1}x_{0},T^{n+1}x_{0})=0 $ %
for all $\lambda>0$. Because the sequence $\{T^{n}x_{0}\}_{n\in\mathbb{N}}$ %
is asymptotically $T$-regular on some point $x_{0}\in X_{\omega^{G}}.$ %
Thus, letting $n\rightarrow %
\infty$ in the inequality \ref{C}, %
and using the fact that $\psi(\omega^{G}_{ %
\lambda}(T^{n}z,y^{\ast},y^{\ast}))\leq  %
\psi(\omega^{G}_{\lambda}(T^{n+1}z,y^{\ast},y^{\ast}))$, %
we have  that %
\begin{align}%
1\notag %
&\le\lim\limits_{n\rightarrow\infty}\inf\biggl\{ %
\kappa_{1}(\psi(\omega^{G}_{\lambda} %
(T^{n}z,y^{\ast},y^{\ast})))\notag %
+\kappa_{4}(\psi(\omega^{G}_{\lambda} %
(T^{n}z,y^{\ast},y^{\ast})))\notag %
+\kappa_{5}(\psi(\omega^{G}_{\lambda} %
(T^{n}z,y^{\ast},y^{\ast})))\\\notag %
&+\kappa_{9}(\psi(\omega^{G}_{\lambda} %
(T^{n}z,y^{\ast},y^{\ast})))\notag %
+\kappa_{10}(\psi(\omega^{G}_{\lambda} %
(T^{n}z,y^{\ast},y^{\ast}))) %
+\kappa_{11}(\psi(\omega^{G}_{\lambda} %
(T^{n}z,y^{\ast},y^{\ast})))\biggr\}. %
\end{align}%

Therefore, $\lim\limits_{n\rightarrow\infty} %
\psi(\omega^{G}_{\lambda}(T^{n}z,y^{\ast},y^{\ast}))=0$, %
for which by condition 4 of Proposition %
\ref{2.3}, we have that 

\begin{equation} %
\lim\limits_{n\rightarrow\infty}\omega^{G}_{\lambda} %
(T^{n}z,y^{\ast},y^{\ast}))=0 ~\forall~\lambda>0. %
\end{equation}%

Hence $T^{n}z\rightarrow y^{\ast}$ as %
$n\rightarrow\infty$. %
\newline
(f)
Now, suppose, if possible, that %
$\lim\limits_{n\rightarrow\infty}T^{n}z$ %
exists and not unique. %
Let $\lim\limits_{n\rightarrow\infty}T^{n}z=x^{\ast}$ %
and $\lim\limits_{n\rightarrow\infty}T^{n}z %
=y^{\ast}$ as we have seen above, %
where $x^{\ast}\neq y^{\ast}$. %
For each $\lambda>0$, $x^{\ast}\neq y^{\ast} %
\Rightarrow\omega^{G}_{\lambda}(x^{\ast}, %
y^{\ast},y^{\ast})>0$. If we take $\psi( %
\epsilon_{1})=\frac{1}{3}\psi(\omega_{\lambda}(x^{\ast}, %
y^{\ast},y^{\ast}))>0,$ %
then for $\lambda>0$, %
$\lim\limits_{n\rightarrow\infty}T^{n}z %
=x^{\ast}\Rightarrow$ given $\epsilon_{1}>0, %
~\exists~ m_{1}\in\mathbb{N}$ such that  %
$\psi(\omega^{G}_{\frac{\lambda}{2}}(x^{\ast},T^{n}z,T^{n}z)) %
<\psi(\epsilon_{1})$ for $n>m_{1}$. %
Again, %
$\lim\limits_{n\rightarrow\infty}T^{n}z=y^{\ast} %
\Rightarrow$ given $\epsilon_{1}>0,\exists~ m_{2} %
\in\mathbb{N}$ such that $\psi(\omega^{G}_{\frac{ %
\lambda}{4}}(y^{\ast},T^{n}z,T^{n}z))<\psi(\epsilon_{1})$ %
for $n>m_{2}$. %
Set $m=\max\{m_{1},m_{2}\}$, then for $n\geq m$, %
by condition 6 of Proposition \ref{2.2}, we have %

\begin{eqnarray}%
\psi(\omega^{G}_{\lambda}(x^{\ast}, %
y^{\ast},y^{\ast}))\notag %
&\le&\psi(\omega^{G}_{\frac{\lambda}{2}} %
(x^{\ast},T^{n}z,T^{n}z) %
+2\omega^{G}_{\frac{\lambda}{4}} %
(y^{\ast},T^{n}z,T^{n}z))\\\notag %
&\le&\psi(\omega^{G}_{\frac{\lambda}{2}} %
(x^{\ast},T^{n}z,T^{n}z)) %
+2\psi(\omega^{G}_{\frac{\lambda}{4}} %
(y^{\ast},T^{n}z,T^{n}z))\\\notag %
&<&\psi(\epsilon_{1})+2\psi(\epsilon_{1})\\\notag %
&=&3\psi(\epsilon_{1}), %
\end{eqnarray}%

which we can see clearly that $\psi(\omega^{G}_{\lambda} %
(x^{\ast}, y^{\ast},y^{\ast}))< %
\psi(\omega^{G}_{\lambda}(x^{\ast}, y^{\ast},y^{\ast}))$ %
for all $\lambda>0$. This is a contradiction. %
Hence, $x^{\ast}= y^{\ast}$. %
Therefore the fixed point of $T$ is unique and %
 hence, the proof is completed. %
		
\end{proof}%

\begin{corollary}\label{aaa}%
Let $(X,\omega^{G},\preceq)$ be  %
a complete preordered %
modular G-metric space. %
Let $T:X_{\omega^{G}}\rightarrow X_{\omega^{G}}$ %
be a non-decreasing, asymptotically $T$-regular map on %
some point $x_{0}\in X_{\omega^{G}}$ %
such that for each $\lambda>0$, %
there is $\nu(\lambda)\in %
\lbrack 0,\lambda)$ such that %
the following conditions hold: %
\begin{enumerate}%
\item[(1)]	%
\begin{align}\label{bb} %
\psi(\omega^{G}_{\lambda}(Tx,Ty,Ty))\notag %
\le& %
\kappa_{1}(\psi(\omega^{G}_{\lambda}(x,y,y))) %
\psi(\omega^{G}_{\lambda+\nu(\lambda)}(x,y,y))\\ \notag %
+&\kappa_{2}(\psi(\omega^{G}_{\lambda}(x,y,y))) %
\psi(\omega^{G}_{\lambda}(x,Tx,Tx))\\ %
+& \kappa_{3}(\psi(\omega^{G}_{\lambda}(x,y,y))) %
\psi(\omega^{G}_{\lambda}(y,Ty,Ty)), %
\end{align}%
where $\psi\in \bar{\Psi}$ and $\{\kappa_{1},\kappa_{2}, %
\kappa_{3}\}\in \mathcal{S}_{Ger}$ with $\kappa_{1}(t) %
+2\max\{\sup_{t\geq 0}\kappa_{2}(t), \sup_{t\geq 0} %
\kappa_{3}(t)\}<1,$ %
and distinct $x,y\in X_{\omega^{G}}.$ %
Assume that if a non-decreasing sequence $\{x_{n}\}_{n\in %
\mathbb{N}}$ converges to $x^{\ast}$, %
then $x_{n}\preceq x^{\ast}$ for each $n\in\mathbb{N}$; %

\item[(2)]%
if $\psi$ is sub-additive and for any $x,y\in X_{\omega^{G}}$, %
there exists $z\in X_{\omega^{G}}$ with $z\preceq Tz$ and %
$\omega^{G}_{\lambda}(z,Tz,Tz)$ is finite  for all $\lambda>0$ %
such that $z$ is comparable to both $x~{\rm and}~y$. %
Then $T$ has a fixed point in $x^{\ast}\in X_{\omega^{G}}$ and %
the sequence define by $\{T^{n}x_{0}\}_{n\ge 1}$ %
converges to $x^{\ast}$. %
Furthermore, the fixed point of $T$ is unique. %
\end{enumerate}%
\end{corollary}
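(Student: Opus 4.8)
The plan is to obtain \autoref{aaa} as a direct specialization of \autoref{3.1}. First I would put $\kappa_{4}=\kappa_{5}=\cdots=\kappa_{11}\equiv 0$ in the contractive condition \ref{a}; the identically zero function belongs to $\mathcal{S}_{Ger}$ because the defining implication $\kappa(t_{n})\to 1\Rightarrow t_{n}\to 0$ holds vacuously for it, so this is a legitimate choice of Geraghty functions. With these choices every summand carrying $\kappa_{4},\dots,\kappa_{11}$ disappears and \ref{a} collapses to exactly \ref{bb}. All the structural hypotheses of \autoref{3.1}---completeness of $(X,\omega^{G},\preceq)$, that $T$ is non-decreasing and asymptotically $T$-regular at $x_{0}$, the order-limit compatibility for non-decreasing convergent sequences, sub-additivity of $\psi\in\bar{\Psi}$, and the existence of a point $z$ with $z\preceq Tz$ comparable to any prescribed pair---coincide verbatim with those of the corollary, so they transfer with no extra work.

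The second step is to verify that the corollary's coefficient bound $\kappa_{1}(t)+2\max\{\sup_{t\ge 0}\kappa_{2}(t),\sup_{t\ge 0}\kappa_{3}(t)\}<1$ is strong enough to drive every quantitative estimate of \autoref{3.1} in its reduced form. In the asymptotic-$T$-regularity step, setting $x=T^{n-1}x_{0}$ and $y=T^{n}x_{0}$ in \ref{bb} and moving the self-referential $\kappa_{3}$ term to the left yields $\psi(\omega^{G}_{\lambda}(T^{n}x_{0},T^{n+1}x_{0},T^{n+1}x_{0}))\le\frac{\kappa_{1}+\kappa_{2}}{1-\kappa_{3}}\,\psi(\omega^{G}_{\lambda}(T^{n-1}x_{0},T^{n}x_{0},T^{n}x_{0}))$, and the displayed bound forces $\kappa_{1}+\kappa_{2}+\kappa_{3}<1$, so the ratio is strictly below $1$. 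Hence $\{\psi(\omega^{G}_{\lambda}(T^{n}x_{0},T^{n+1}x_{0},T^{n+1}x_{0}))\}$ is non-increasing and, by the same $\liminf$ argument as in the theorem, tends to $0$; the Cauchy property, convergence, and existence of a fixed point $x^{\ast}$ then follow exactly as in the theorem's parts (b)--(d).

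For uniqueness I would transcribe part (e) of the theorem's argument in the three-term setting. Taking $x=T^{n}z$, $y=x^{\ast}$ and using $Tx^{\ast}=x^{\ast}$ annihilates the $\kappa_{3}$ contribution, while Proposition \ref{2.2}(6) together with the sub-additivity of $\psi$ bounds $\psi(\omega^{G}_{\lambda}(T^{n}z,T^{n+1}z,T^{n+1}z))$ by $\psi(\omega^{G}_{\lambda}(T^{n}z,x^{\ast},x^{\ast}))+2\psi(\omega^{G}_{\lambda}(T^{n+1}z,x^{\ast},x^{\ast}))$; this is precisely where the factor $2$ in the corollary's hypothesis originates. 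Collecting terms gives a contraction factor of the shape $(\kappa_{1}+\kappa_{2})/(1-2\kappa_{2})$, whence $T^{n}z\to x^{\ast}$, and symmetrically $T^{n}z\to y^{\ast}$; the three-term estimate of part (f), again through Proposition \ref{2.2}(6), then forces $x^{\ast}=y^{\ast}$.

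The step I expect to be the main obstacle is the coefficient bookkeeping in this uniqueness estimate: the denominator that appears after collecting terms is $1-2\kappa_{2}$, so I must confirm that $\kappa_{1}(t)+2\max\{\sup\kappa_{2},\sup\kappa_{3}\}<1$ indeed keeps the ratio $(\kappa_{1}+\kappa_{2})/(1-2\kappa_{2})$ strictly below $1$ for the relevant arguments. If a single global supremum turns out to be a hair too weak, the honest remedy is to track the exact combination $\kappa_{1}+\kappa_{2}$ against $1-2\kappa_{2}$ produced by the term collection, rather than bounding each $\kappa_{i}$ separately, and then to invoke the $\liminf$ device of the theorem to exclude a strictly positive limit. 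Apart from this, the proof is a routine restriction of \autoref{3.1} with nine of the eleven Geraghty coefficients set to zero.
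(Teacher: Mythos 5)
Your proposal is correct and follows essentially the same route as the paper: the paper's own proof of Corollary \ref{aaa} is a single sentence invoking \autoref{3.1}, and your reduction --- setting $\kappa_{4}=\cdots=\kappa_{11}\equiv 0$ (noting the zero function is vacuously Geraghty) so that \ref{a} collapses to \ref{bb} --- is exactly the intended specialization, carried out with far more care than the paper itself provides. The coefficient worry you flag in the uniqueness step (whether $\kappa_{1}(t)+2\max\{\sup\kappa_{2},\sup\kappa_{3}\}<1$ keeps $(\kappa_{1}+\kappa_{2})/(1-2\kappa_{2})$ below $1$) is a genuine one, but it is inherited from the quantitative bookkeeping of \autoref{3.1} rather than introduced by your reduction, and the paper does not address it at all.
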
%

\begin{proof} %
Since $\{\kappa_{1},\kappa_{2}, \kappa_{3}\}\in %
\mathcal{S}_{Ger}$, %
then by \autoref{3.1},  %
fixed point of $T$ is unique. %
	
\end{proof}%

We have the following variant form on \autoref{3.1} %

\begin{theorem}\label{3.2}%
Let $(X,\omega^{G},\preceq)$ be a %
complete preordered modular G-metric space. %
Let $T:X_{\omega^{G}}\rightarrow X_{\omega^{G}}$ %
be a non-decreasing, asymptotically $T$-regular map on %
some point $x_{0}\in X_{\omega^{G}}$ %
such that for each $\lambda>0$ and   %
for some positive integer $m\ge 1$, %
there is $\nu(\lambda)\in [0,\lambda)$  %
such that the following conditions hold: %
\begin{align}\label{7}%
(1)~~ f_{m}\notag %
\le& %
\kappa_{1}(\psi(\omega^{G}_{\lambda}(x,y,y))) %
\psi(\omega^{G}_{\lambda+\nu(\lambda)}(x,y,y)) \notag %
+\kappa_{2}(\psi(\omega^{G}_{\lambda}(x,y,y))) %
\psi(\omega^{G}_{\lambda}(x,T^{m}x,T^{m}x))\\ \notag %
+& \kappa_{3}(\psi(\omega^{G}_{\lambda}(x,y,y))) %
\psi(\omega^{G}_{\lambda}(y,T^{m}y,T^{m}y)) \notag %
+\kappa_{4}(\psi(\omega^{G}_{\lambda}(x,y,y))) %
\psi(\omega^{G}_{\lambda}(T^{m}x,T^{m}y,T^{m}y))\\ \notag %
+&\kappa_{5}(\psi(\omega^{G}_{\lambda}(x,y,y))) %
\psi(\omega^{G}_{\lambda}(T^{m}x,y,y)) \notag %
+\kappa_{6}(\psi(\omega^{G}_{\lambda}(x,y,y)))\\\notag %
\times&\psi\Set{\frac{\omega^{G}_{\lambda}(y,T^{m}y,T^{m}y) %
[1+\omega^{G}_{\lambda}(x,y,y)]}{1 %
+\omega^{G}_{\lambda}(x,T^{m}x,T^{m}x)}}\\ \notag %
+&\kappa_{7}(\psi(\omega^{G}_{\lambda}(x,y,y)))\\\notag %
\times&\psi\Set{\frac{\omega^{G}_{\lambda}(y,T^{m}y,T^{m}y) %
[1+\omega^{G}_{\lambda}(x,T^{m}x,T^{m}x)]}{1 %
+\omega^{G}_{\lambda}(x,y,y)}}\\ \notag %
+&\kappa_{8}(\psi(\omega^{G}_{\lambda}(x,y,y)))\\\notag %
\times&\psi\Set{\frac{\omega^{G}_{\lambda}(x,T^{m}x,T^{m}x) %
[1+\omega^{G}_{\lambda}(x,y,y)]}{1+ %
\omega^{G}_{\lambda}(y,T^{m}x,T^{m}x) %
+\omega^{G}_{\lambda}(x,y,y)}}\\ \notag %
+&\kappa_{9}(\psi(\omega^{G}_{\lambda}(x,y,y)))\\\notag %
\times&\psi\Set{\frac{\omega^{G}_{\lambda}(x,y,y) %
[1+\omega^{G}_{\lambda}(y,T^{m}y,T^{m}y)]} %
{1+\omega^{G}_{\lambda}(T^{m}x,T^{m}x,y) %
+\omega^{G}_{\lambda}(T^{m}x,T^{m}y,T^{m}y)}}\\ \notag %
+&\kappa_{10}(\psi(\omega^{G}_{\lambda}(x,y,y)))\\\notag %
\times&\psi\Set{\frac{\omega^{G}_{\lambda}(T^{m}x,T^{m}y,T^{m}y) %
[1+\omega^{G}_{\lambda}(x,T^{m}y,T^{m}y)]} %
{1+\omega^{G}_{\lambda}(T^{m}x,T^{m}x,y) %
+\omega^{G}_{\lambda}(x,T^{m}y,T^{m}y)}}\\ \notag %
+&\kappa_{11}(\psi(\omega^{G}_{\lambda}(x,y,y)))\\ %
\times&\psi\Set{\frac{\omega^{G}_{\lambda}(T^{m}x,T^{m}y,T^{m}y) %
[1+\omega^{G}_{\lambda}(y,T^{m}y,T^{m}y) %
+\omega^{G}_{\lambda}(T^{m}x,T^{m}x,y)]} %
{1+\omega^{G}_{\lambda}(T^{m}x,T^{m}x,y) %
+\omega^{G}_{\lambda}(T^{m}x,T^{m}y,T^{m}y)}}, %
\end{align}%

where $f_{m}=\psi(\omega^{G}_{\lambda}(T^{m}x,T^{m}y,T^{m}y))$ %
and $\psi\in \bar{\Psi}$ , $\{\kappa_{1},\kappa_{2}, \cdots, %
\kappa_{11}\}\in \mathcal{S}_{Ger}$ and %
\newline%
$\max\{\sup_{t\geq 0}\kappa_{1}(t),\sup_{t\geq 0}\kappa_{2}(t), \cdots, %
\sup_{t\geq 0}\kappa_{11}(t)\}<1.$ %
Assume that if a non-decreasing sequence $\{x_{n}\}_{n\in\mathbb{N}}$ %
converges to $x^{\ast}$, then $x_{n}\preceq x^{\ast}$ for %
each $n\in\mathbb{N}$, %
 \newline%
$(2)$ if $\psi$ is sub-additive and for any $x,y\in X_{\omega^{G}}$, %
there exists $z\in X_{\omega^{G}}$ with $z\preceq Tz$ and %
$\omega^{G}_{\lambda}(z,Tz,Tz)$ is finite  for all $\lambda>0$ such %
that $z$ is comparable to both $x~{\rm and}~y$. %
Then $T$ has a  fixed point $x^{\ast}\in X_{\omega^{G}}$ for %
some positive integer $m\ge 1$ in $X_{\omega^{G}}$ %
and the sequence define by $\{T^{n}x_{0}\}_{n\ge 1}$  %
converges to $x_{\ast}$. %
Moreover the fixed point of $T$ is unique. %
\end{theorem}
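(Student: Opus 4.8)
The plan is to reduce \autoref{3.2} to \autoref{3.1} by the classical iterate argument applied to the map $S:=T^{m}$. First I would record that $S$ inherits every hypothesis of \autoref{3.1}. Since $T$ is non-decreasing, the composition $S=T^{m}$ is non-decreasing; writing $Sx=T^{m}x$, the contractive inequality \eqref{7} is literally the inequality \eqref{a} of \autoref{3.1} with $T$ replaced by $S$, with the same classes $\psi\in\bar\Psi$ and $\{\kappa_{1},\dots,\kappa_{11}\}\in\mathcal{S}_{Ger}$ and the same bound $\max_i\sup_{t\ge 0}\kappa_i(t)<1$; and the comparability hypothesis $(2)$ together with the closure condition on non-decreasing convergent sequences are stated verbatim for $S$. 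It remains to check that $S$ is asymptotically $S$-regular at $x_{0}$, i.e. $\lim_{k\to\infty}\omega^{G}_{\lambda}(T^{mk}x_{0},T^{m(k+1)}x_{0},T^{m(k+1)}x_{0})=0$ for all $\lambda>0$. This I would obtain from the asymptotic $T$-regularity of $\{T^{n}x_{0}\}$ together with finitely many applications of the rectangle inequality (condition (5) of Definition \ref{2.1}) and condition (6) of Proposition \ref{2.2}, which bound an $m$-step increment by a finite sum of one-step increments, each tending to $0$. The same two tools propagate finiteness of $\omega^{G}_{\lambda}(x_{0},Tx_{0},Tx_{0})$ to finiteness of $\omega^{G}_{\lambda}(x_{0},T^{m}x_{0},T^{m}x_{0})$.

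With the hypotheses verified, applying \autoref{3.1} to $S=T^{m}$ yields a unique fixed point $x^{\ast}\in X_{\omega^{G}}$ of $S$, with $S^{k}x_{0}=T^{mk}x_{0}\to x^{\ast}$. Next I would promote $x^{\ast}$ to a fixed point of $T$: from $T^{m}x^{\ast}=x^{\ast}$ we get $T^{m}(Tx^{\ast})=T(T^{m}x^{\ast})=Tx^{\ast}$, so $Tx^{\ast}$ is also a fixed point of $S$, and by the uniqueness just established $Tx^{\ast}=x^{\ast}$. Uniqueness of the fixed point of $T$ is then immediate, since every fixed point of $T$ is a fixed point of $T^{m}=S$, and $S$ has only one.

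To finish I would prove convergence of the \emph{full} Picard sequence $\{T^{n}x_{0}\}$. Writing $n=mk+j$ with $0\le j\le m-1$, observe that $T^{mk+j}x_{0}=S^{k}(T^{j}x_{0})$. For each fixed $j$ the seed $T^{j}x_{0}$ satisfies the starting hypotheses of \autoref{3.1} for $S$: the monotone chain $x_{0}\preceq Tx_{0}\preceq\cdots$ gives $T^{j}x_{0}\preceq T^{m+j}x_{0}=S(T^{j}x_{0})$ by transitivity, finiteness of $\omega^{G}_{\lambda}(T^{j}x_{0},S(T^{j}x_{0}),S(T^{j}x_{0}))$ follows as above, and asymptotic $S$-regularity at $T^{j}x_{0}$ follows from asymptotic $T$-regularity. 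Hence each of the $m$ subsequences $\{S^{k}(T^{j}x_{0})\}_{k}$ converges, by \autoref{3.1}, to the unique fixed point $x^{\ast}$ of $S$; since all $m$ subsequences share the same limit, $\lim_{n\to\infty}\omega^{G}_{\lambda}(T^{n}x_{0},x^{\ast},x^{\ast})=0$ for every $\lambda>0$, which is the claimed convergence via Proposition \ref{2.3}.

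The main obstacle I anticipate is bookkeeping rather than conceptual: transferring asymptotic regularity and finiteness from single steps of $T$ to $m$-steps of $S$ (and to the shifted seeds $T^{j}x_{0}$) while keeping the parameters $\lambda$ and $\nu(\lambda)$ consistent under the rectangle inequality, since splitting an $m$-fold increment redistributes $\lambda$ among the summands and one must ensure each resulting term still vanishes in the limit. Once this monotone-orbit decomposition is arranged carefully, the remainder is a direct invocation of \autoref{3.1} and the standard $T^{m}$-to-$T$ uniqueness trick.
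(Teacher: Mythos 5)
Your proposal is correct and follows essentially the same route as the paper: apply \autoref{3.1} to $S=T^{m}$, then use $T^{m}(Tx^{\ast})=T(T^{m}x^{\ast})=Tx^{\ast}$ together with the uniqueness of the fixed point of $T^{m}$ to conclude $Tx^{\ast}=x^{\ast}$, and hence uniqueness for $T$. You are in fact more careful than the paper, whose proof is only the final uniqueness trick and omits the verifications you supply (that $T^{m}$ is non-decreasing and asymptotically $T^{m}$-regular, the propagation of finiteness, and the convergence of the full Picard sequence via the decomposition $n=mk+j$).
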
%

\begin{proof} %
By \autoref{3.1}, $T^{m}$ has a  fixed point say %
$u_{\ast}\in X_{\omega^{G}}$ for some positive %
integer $m\ge 1$ by using inequality \ref{7}. %
Now $T^{m}(Tu_{\ast})=T^{m+1}u_{\ast}=T(T^{m} %
u_{\ast})=Tu_{\ast}$, %
so $Tu_{\ast}$ is a fixed point of $T^{m}$. %
By the uniqueness of fixed point of $T^{m}$, %
we have $Tu_{\ast}=u_{\ast}$. Therefore, $u_{\ast}$ %
is a fixed point of $T$. Since fixed point of $T$ is %
also fixed point of $T^{m}$, hence $T$ has a unique %
fixed point in $X_{\omega^{G}}$. %

\end{proof}%

\begin{theorem}\label{qq}%
Let $(X,\omega^{G},\preceq)$ be a complete %
preordered modular G-metric space. %
Let $T:X_{\omega^{G}}\rightarrow X_{\omega^{G}}$ %
be a non-decreasing, asymptotically $T$-regular map on %
some point $x_{0}\in X_{\omega^{G}}$ %
such that for each $\lambda>0$ and  for  %
some positive integer $m\ge 1$, %
there is $\nu(\lambda)\in [0,\lambda)$  %
such that the following conditions hold: %
\begin{align}\label{aa}%
(1)~\psi(\omega^{G}_{\lambda}(Tx,Ty,Tz))\notag %
\le& %
\kappa_{1}(\psi(\omega^{G}_{\lambda}(x,y,z))) %
\psi(\omega^{G}_{\lambda+\nu(\lambda)}(x,y,z)) \notag%
+\kappa_{2}(\psi(\omega^{G}_{\lambda}(x,y,z))) %
\psi(\omega^{G}_{\lambda}(x,Tx,Tx))\\ \notag %
+& \kappa_{3}(\psi(\omega^{G}_{\lambda}(x,y,z))) %
\psi(\omega^{G}_{\lambda}(y,Ty,Ty)) \notag %
+\kappa_{4}(\psi(\omega^{G}_{\lambda}(x,y,z))) %
\psi(\omega^{G}_{\lambda}(z,Tz,Tz))\\ \notag %
+&\kappa_{5}(\psi(\omega^{G}_{\lambda}(x,y,z)) %
\psi(\omega^{G}_{\lambda}(Tx,y,z)) \notag %
+\kappa_{6}(\psi(\omega^{G}_{\lambda}(x,y,z)))\\\notag %
\times&\psi\Set{\frac{\omega^{G}_{\lambda}(y,Ty,Ty) %
[1+\omega^{G}_{\lambda}(x,y,z)]}{1+\omega^{G}_{ %
\lambda}(x,Tx,Tx)}}\\ \notag %
+&\kappa_{7}(\psi(\omega^{G}_{\lambda}(x,y,z)))\\\notag %
\times&\psi\Set{\frac{\omega^{G}_{\lambda}(y,Ty,Ty) %
[1+\omega^{G}_{\lambda}(x,Tx,Tx)]}{1+\omega^{G}_{ %
\lambda}(x,y,z)}}\\ \notag %
+&\kappa_{8}(\psi(\omega^{G}_{\lambda}(x,y,z)))\\\notag %
\times&\psi\Set{\frac{\omega^{G}_{\lambda}(x,Tx,Tx) %
[1+\omega^{G}_{\lambda}(x,y,z)]}{1+\omega^{G}_{ %
\lambda}(y,Tx,Tx) %
+\omega^{G}_{\lambda}(x,y,z)}}\\ \notag %
+&\kappa_{9}(\psi(\omega^{G}_{\lambda}(x,y,z)))\\\notag %
\times&\psi\Set{\frac{\omega^{G}_{\lambda}(x,y,z) %
[1+\omega^{G}_{\lambda}(y,Ty,Ty)]}{1+\omega^{G}_{ %
\lambda}(Tx,Tx,y) %
+\omega^{G}_{\lambda}(Tx,Ty,Ty)}}\\ \notag %
+&\kappa_{10}(\psi(\omega^{G}_{\lambda}(x,y,z)))\\\notag %
\times&\psi\Set{\frac{\omega^{G}_{\lambda}(Tx,Ty,Ty) %
[1+\omega^{G}_{\lambda}(x,Ty,Ty)]}{1+\omega^{G}_{ %
\lambda}(Tx,Tx,y) %
+\omega^{G}_{\lambda}(x,Ty,Ty)}}\\ \notag %
+&\kappa_{11}(\psi(\omega^{G}_{\lambda}(x,y,z)))\\ %
\times&\psi\Set{\frac{\omega^{G}_{\lambda}(Tx,Ty,Ty) %
[1+\omega^{G}_{\lambda}(y,Ty,Ty)+\omega^{G}_{ %
\lambda}(Tx,Tx,y)]} %
{1+\omega^{G}_{\lambda}(Tx,Tx,y)+\omega^{G}_{ %
\lambda}(Tx,Ty,Ty)}}, %
\end{align}%

where $\psi\in \bar{\Psi}$ , $\{\kappa_{1}, %
\kappa_{2}, \cdots, \kappa_{11}\} %
\in \mathcal{S}_{Ger}$ and %
$\max\{\sup_{t\geq 0}\kappa_{1}(t), %
\sup_{t\geq 0}\kappa_{2}(t), \cdots, %
\sup_{t\geq 0}\kappa_{11}(t)\}<1.$ %
Assume that if a non-decreasing sequence %
$\{x_{n}\}_{n\in\mathbb{N}}$ %
converges to $x^{\ast}$, then $x_{n}\preceq x^{\ast}$ %
 for each $n\in\mathbb{N}$, %
 \newline %
$(2)$ if $\psi$ is sub-additive and for any %
$x,y,z\in X_{\omega^{G}}$, %
there exists $u\in X_{\omega^{G}}$ with %
$u\preceq Tu$ and %
$\omega^{G}_{\lambda}(u,Tu,Tu)$ is finite  %
for all $\lambda>0$ %
such that $u$ is comparable to  $x,y~{\rm and}~z$. %
Then $T$ has a fixed point in $ x_{\ast} %
\in X_{\omega^{G}}$ %
and the sequence define by $\{T^{n}x_{0}\}_{n\ge 1}$ %
 converges to $x_{\ast}$. %
Moreover the fixed point of $T$ is unique. %
\end{theorem}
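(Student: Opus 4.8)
The plan is to deduce \autoref{qq} directly from \autoref{3.1} by specialising the third variable, exactly in the spirit of the reduction already used to obtain \autoref{3.2}. First I would set $z=y$ in the contractive hypothesis \ref{aa}. The left-hand side then becomes $\psi(\omega^{G}_{\lambda}(Tx,Ty,Ty))$, which is precisely the left-hand side of \ref{a}, while each coefficient argument $\psi(\omega^{G}_{\lambda}(x,y,z))$ collapses to $\psi(\omega^{G}_{\lambda}(x,y,y))$. I would then check, summand by summand, that the eleven blocks on the right of \ref{aa} reduce to the corresponding blocks of \ref{a}: the $\kappa_{1},\kappa_{2},\kappa_{3}$ and $\kappa_{6}$--$\kappa_{11}$ terms coincide verbatim once $z=y$, and the $\kappa_{5}$ block $\psi(\omega^{G}_{\lambda}(Tx,y,z))$ becomes $\psi(\omega^{G}_{\lambda}(Tx,y,y))$ as required. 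The symmetry of $\omega^{G}$ in its three arguments (condition (4) of Definition \ref{2.1}) is what licenses the interchange of the repeated entries throughout this matching.

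Having recovered condition (1) of \autoref{3.1}, I would verify that the remaining hypotheses transfer. Monotonicity and asymptotic $T$-regularity of $T$ are assumed identically in both statements, as is the order-continuity assumption on non-decreasing convergent sequences. For condition (2), the comparability clause here asks for an element $u$ comparable to each of $x,y,z$; specialising $z=y$ produces an element comparable to $x$ and $y$ with $u\preceq Tu$ and $\omega^{G}_{\lambda}(u,Tu,Tu)$ finite, which is exactly condition (2) of \autoref{3.1}. Invoking \autoref{3.1} would then yield a unique $x^{\ast}\in X_{\omega^{G}}$ with $Tx^{\ast}=x^{\ast}$ such that the Picard sequence $\{T^{n}x_{0}\}_{n\ge 1}$ converges to $x^{\ast}$, the several forms of this convergence being read off through the equivalent formulations in Proposition \ref{2.3}.

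The main obstacle I anticipate is the $\kappa_{4}$ term. In \ref{a} this summand is $\kappa_{4}(\cdot)\,\psi(\omega^{G}_{\lambda}(Tx,Ty,Ty))$, i.e. it reproduces the left-hand side and is absorbed to the left in the proof of \autoref{3.1}; in \ref{aa} the $\kappa_{4}$ summand is instead $\kappa_{4}(\cdot)\,\psi(\omega^{G}_{\lambda}(z,Tz,Tz))$, which under $z=y$ degenerates to $\psi(\omega^{G}_{\lambda}(y,Ty,Ty))$ rather than to $\psi(\omega^{G}_{\lambda}(Tx,Ty,Ty))$. Reconciling these two forms is the delicate point: one must either argue that the specialised inequality \ref{aa} still dominates \ref{a} once this term is accounted for, using $\max\{\sup_{t\ge 0}\kappa_{i}(t)\}<1$ to keep the combined self-referential coefficient strictly below $1$, or else re-run the estimates of \autoref{3.1} afresh with the $\kappa_{4}$ block left in its present position. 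Once that bookkeeping is settled, every subsequent step carries over unchanged from the proof of \autoref{3.1}: the monotone decrease of $\{\psi(\omega^{G}_{\lambda}(T^{n}x_{0},T^{n+1}x_{0},T^{n+1}x_{0}))\}$ to $0$, the contradiction argument forcing the modular G-Cauchy property, the identification of the limit as a fixed point via condition (1) of Definition \ref{2.1}, and the uniqueness through condition (6) of Proposition \ref{2.2} together with sub-additivity of $\psi$.
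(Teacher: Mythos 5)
Your reduction is essentially the paper's own proof: the paper likewise takes $x=T^{n-1}x_{0}$ and $y=z=T^{n}x_{0}$ in \ref{aa} and then simply invokes \autoref{3.1} to conclude, so your route and the paper's coincide. The $\kappa_{4}$ obstacle you flag is real but dissolves under the second of your two options (re-running the estimates of \autoref{3.1} with the $\kappa_{4}$ block left as $\kappa_{4}(\cdot)\,\psi(\omega^{G}_{\lambda}(z,Tz,Tz))$): along the Picard iteration one has $z=y=T^{n}x_{0}$, so this summand equals $\psi(\omega^{G}_{\lambda}(T^{n}x_{0},T^{n+1}x_{0},T^{n+1}x_{0}))$, i.e.\ exactly the left-hand side, and is absorbed to the left precisely as the $\kappa_{4}$ term of \ref{a} was, while in the fixed-point and uniqueness steps it degenerates to $\psi(\omega^{G}_{\lambda}(x^{\ast},Tx^{\ast},Tx^{\ast}))$ or to $0$, which only makes the right-hand side smaller. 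You are, if anything, more careful than the paper, which passes over this mismatch (and over its unexplained assertion that $\psi(\omega^{G}_{\lambda}(T^{n}x_{0},T^{n+1}x_{0},T^{n+1}x_{0}))=0$) without comment.
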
%

\begin{proof} %
(a)
Let $x_{0}\in X_{\omega^{G}}$ be such that %
 $x_{0}\preceq Tx_{0}$ and let %
$x_{n}=Tx_{n-1}=T^{n}x_{0}$ for all $n\in  %
\mathbb{N}.$ Regarding that $T$ is %
non-decreasing mapping, we have that $x_{0} %
\preceq Tx_{0}=x_{1}$, %
which implies that $x_{1}=Tx_{0}\preceq %
Tx_{1}=x_{2}$. Inductively, %
we have %
\begin{equation}\label{pp}%
x_{0}\preceq x_{1}\preceq x_{2}\preceq \cdots \preceq x_{n-1} %
\preceq x_{n}\preceq x_{n+1}\preceq \cdots. %
\end{equation}%
	
Assume that there exists $n_{0}\in\mathbb{N}$ %
such that $x_{n_{0}}=x_{n_{0}+1}$. Since %
$x_{n_{0}}=x_{n_{0}+1}=Tx_{n_{0}}$, then %
$x_{n_{0}}$ is the fixed point of $T$. %
Now suppose that $x_{n} \precneqq x_{n+1}$ for %
all $n\in\mathbb{N}$, thus  by inequality%
\ref{pp}, we have that %
\begin{equation}\label{ppp}%
x_{0}\prec x_{1}\prec x_{2}\prec \cdots %
\prec x_{n-1}\prec x_{n}\prec x_{n+1}\prec \cdots. %
\end{equation}%
Now for each $\lambda>0,$ and $x_{0}\prec %
Tx_{0}$ for all $n\in\mathbb{N}$ %
implies that $\omega^{G}_{\lambda}(x_{0},Tx_{0},Tx_{0})>0.$ %
Again, let $x_{0}\in X_{\omega^{G}}$  such that %
$\omega^{G}_{\lambda}(x_{0},Tx_{0},Tx_{0}) %
<\infty~\forall~\lambda>0$.  %
\newline
(b)	
Next, we show  that for all $n\in \mathbb{N}$,  %
$\omega^{G}_{\lambda}(T^{n}x_{0},T^{n+1}x_{0},T^{n+1}x_{0})= 0$ %
for all $\lambda>0$, as  $n\rightarrow\infty$. This means that %
 the sequence $\{T^{n}x_{0}\}_{n\in\mathbb{N}}$ is asymptotically
 $T$-regular on some point $x_{0}\in X_{\omega^{G}}$. Assume that, %
for each $n\in\mathbb{N}$, there exists $\lambda_{n}>0$ such that %
$\omega^{G}_{\lambda_{n}}(T^{n}x_{0},T^{n+1}x_{0},T^{n+1}x_{0}) %
\neq 0$. %
Otherwise we are done. For each $n\ge 1$, if $0<\lambda<\lambda_{n}$, %
then we have $\omega^{G}_{\lambda}(T^{n}x_{0}, %
T^{n}x_{0},T^{n+1}x_{0})\neq 0$. %
Since $T^{n}x_{0}\preceq T^{n+1}x_{0}$, we have from condition (1) %
of \autoref{qq}, then we have that $\psi(\omega^{G}_{\lambda_{n}}( %
T^{n}x_{0},T^{n+1}x_{0},T^{n+1}x_{0}))\le \psi(\omega^{G}_{\lambda}( %
T^{n}x_{0},T^{n+1}x_{0},T^{n+1}x_{0})) %
=\psi(\omega^{G}_{\lambda}(TT^{n-1}x_{0},TT^{n}x_{0},TT^{n}x_{0}))$. %
Take $x=T^{n-1}x_{0}$  and $ y=T^{n}x_{0}=z$, %
by inequality \ref{aa}, we see that  $\psi(\omega^{G}_{\lambda}( %
T^{n}x_{0},T^{n+1}x_{0},T^{n+1}x_{0}))=0.$  On taking %
limit, we have $\omega^{G}_{\lambda}(T^{n}x_{0}, %
T^{n+1}x_{0},T^{n+1}x_{0})=0$ %
as $n\rightarrow \infty.$ In fact by %
\autoref{3.1}, $T$ has a unique %
fixed in $X_{\omega^{G}}.$ %
	
\end{proof}%

\begin{corollary}\label{n}%
Let $(X,\omega^{G},\preceq)$ be a complete  %
preordered modular G-metric space. %
Let $T:X_{\omega^{G}}\rightarrow X_{\omega^{G}}$ %
be a non-decreasing, asymptotically $T$-regular map %
on some point $x_{0}\in X_{\omega^{G}}$  %
such that for each $\lambda>0$, there is %
$\nu(\lambda)\in [0,\lambda)$ such that the following %
conditions hold: %
	
\begin{align}\label{o} %
(1) \psi(\omega^{G}_{\lambda}(Tx,Ty,Tz))\notag %
\le&%
\kappa_{1}(\psi(\omega^{G}_{\lambda}(x,y,z))) %
\psi(\omega^{G}_{\lambda+\nu(\lambda)}(x,y,z)) \notag %
+\kappa_{2}(\psi(\omega^{G}_{\lambda}(x,y,z))) %
\psi(\omega^{G}_{\lambda}(x,Tx,Tx))\\ %
+& \kappa_{3}(\psi(\omega^{G}_{\lambda}(x,y,z))) %
\psi(\omega^{G}_{\lambda}(y,Ty,Ty)) %
+\kappa_{4}(\psi(\omega^{G}_{\lambda}(x,y,z))) %
\psi(\omega^{G}_{\lambda}(z,Tz,Tz)), %
\end{align}%

where $\psi\in \bar{\Psi}$ and %
$\{\kappa_{1},\kappa_{2}, \kappa_{3},\kappa_{4}\}\in \mathcal{S}_{Ger}$ with %
$\kappa_{1}(t)+2\max\{\sup_{t\geq 0}\kappa_{2}(t), %
\sup_{t\geq 0}\kappa_{3}(t),\\\sup_{t\geq 0}\kappa_{4}(t)\}<1,$ %
and $x,y,z\in X_{\omega^{G}}.$ %
Assume that if a non-decreasing sequence $\{x_{n}\}_{n\in\mathbb{N}}$ %
converges to $x$, then $x_{n}\preceq x$ for each $n\in\mathbb{N}$; %

(2)~ if $\psi$ is sub-additive and for any $x,y,z\in X_{\omega^{G}}$, %
there exists $w\in X_{\omega^{G}}$ with $w\preceq Tw$ and %
$\omega^{G}_{\lambda}(w,Tw,Tw)$ is finite  for all $\lambda>0$ %
such that $w$ is comparable to both $x~{\rm ,}~y~ and~z$. %
Then $T$ has a fixed point in $x\in X_{\omega^{G}}$ and the %
sequence define by $\{T^{n}x_{0}\}_{n\ge 1}$ converges to $x$. %
Moreover the fixed point of $T$ is unique. %
\end{corollary}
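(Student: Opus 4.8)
The plan is to obtain this corollary as a direct specialization of \autoref{qq}, in exactly the manner \autoref{aaa} was deduced from \autoref{3.1}. First I would observe that the contractive inequality \ref{o} is nothing but the three-variable inequality \ref{aa} of \autoref{qq} in the special case where the coefficient functions $\kappa_{5},\kappa_{6},\dots,\kappa_{11}$ are taken to be identically zero. Consequently it suffices to verify that, under this choice, every hypothesis of \autoref{qq} is satisfied, and then to invoke that theorem verbatim to get existence, the convergence $T^{n}x_{0}\to x$, and uniqueness.

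The verification reduces to a few routine checks. First, the constant function $\kappa\equiv 0$ belongs to $\mathcal{S}_{Ger}$: the defining implication ``$\kappa(t_{n})\to 1\Rightarrow t_{n}\to 0$'' holds vacuously, since $\kappa(t_{n})=0$ never tends to $1$; hence after the specialization one still has $\{\kappa_{1},\dots,\kappa_{11}\}\subset\mathcal{S}_{Ger}$. Second, because every Geraghty function takes values in $[0,1)$, the surviving coefficients automatically satisfy $\sup_{t\ge 0}\kappa_{i}(t)\le 1$; moreover the hypothesis $\kappa_{1}(t)+2\max\{\sup_{t\ge 0}\kappa_{2}(t),\sup_{t\ge 0}\kappa_{3}(t),\sup_{t\ge 0}\kappa_{4}(t)\}<1$ forces $\max\{\sup\kappa_{2},\sup\kappa_{3},\sup\kappa_{4}\}<\tfrac{1}{2}$ and $\sup_{t\ge 0}\kappa_{1}(t)<1$, while the vanishing coefficients contribute $0$, so $\max\{\sup_{t\ge 0}\kappa_{1}(t),\dots,\sup_{t\ge 0}\kappa_{11}(t)\}<1$ as demanded by \autoref{qq}. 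Third, the order-continuity assumption on non-decreasing convergent sequences and condition $(2)$ (existence of a comparable $w$ with $w\preceq Tw$ and $\omega^{G}_{\lambda}(w,Tw,Tw)$ finite for all $\lambda>0$) coincide, up to the renaming $w\leftrightarrow u$, with the corresponding hypotheses of \autoref{qq}. With all hypotheses in force, \autoref{qq} delivers the unique fixed point.

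The point deserving care, rather than a genuine obstacle, is to confirm that the reduced bound $\kappa_{1}(t)+2\max\{\sup\kappa_{2},\sup\kappa_{3},\sup\kappa_{4}\}<1$ is precisely what makes the underlying argument of \autoref{3.1} (which \autoref{qq} ultimately calls upon) close in this restricted setting. There the decisive quantities are the contraction ratios of the form $\rho=\rho_{1}/\rho_{2}$, $\delta$, and $\tau$ assembled from the surviving coefficients, and the factor $2$ in the hypothesis is exactly what absorbs the two-fold terms generated when property $(6)$ of \autoref{2.2} is combined with the sub-additivity of $\psi$ in the uniqueness step. I would therefore trace those ratio estimates once with $\kappa_{5}=\cdots=\kappa_{11}=0$, checking that each stays strictly below $1$ so that the monotone-sequence and $\liminf\ge 1$ contradiction arguments remain valid; all remaining details are inherited unchanged from \autoref{qq} and \autoref{3.1}.
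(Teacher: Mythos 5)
Your proposal is correct, but it takes a genuinely different route from the paper's own proof of Corollary \ref{n}. The paper does not pass through \autoref{qq} at all: it reruns the opening steps of the main argument directly from inequality \ref{o} --- constructing the monotone orbit $x_{n}=T^{n}x_{0}$, substituting $x=T^{n-1}x_{0}$ and $y=z=T^{n}x_{0}$, and extracting the explicit ratio $h=\bigl(\kappa_{1}+\kappa_{2}\bigr)/\bigl(1-(\kappa_{3}+\kappa_{4})\bigr)$ so that $\{\psi(\omega^{G}_{\lambda}(T^{n}x_{0},T^{n+1}x_{0},T^{n+1}x_{0}))\}_{n\ge 1}$ is non-increasing and bounded below --- and only then defers to \autoref{3.1} for the Cauchy, existence, and uniqueness steps. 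You instead observe that inequality \ref{o} is inequality \ref{aa} with $\kappa_{5}=\cdots=\kappa_{11}\equiv 0$, note (correctly, and vacuously) that the zero function belongs to $\mathcal{S}_{Ger}$, and invoke \autoref{qq} wholesale; this mirrors exactly the way the paper deduces Corollary \ref{aaa} from \autoref{3.1}, and it buys brevity and avoids recomputation, at the cost of not exhibiting the explicit contraction ratio $h$ that the paper records. Both routes ultimately bottom out in \autoref{3.1}, so they are of equal logical strength. One small caveat in your verification: the implication from $\kappa_{1}(t)+2\max\{\sup_{t\ge 0}\kappa_{2}(t),\sup_{t\ge 0}\kappa_{3}(t),\sup_{t\ge 0}\kappa_{4}(t)\}<1$ to $\sup_{t\ge 0}\kappa_{1}(t)<1$ is only airtight when that maximum is strictly positive; if it vanishes, the pointwise bound $\kappa_{1}(t)<1$ does not by itself control the supremum. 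This boundary case is glossed over by the paper as well, so it does not place your reduction at any disadvantage relative to the published argument.
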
%
	
\begin{proof} %
(a)
Let $x_{0}\in X_{\omega^{G}}$ be such that %
$x_{0}\preceq Tx_{0}$ and let %
$x_{n}=Tx_{n-1}=T^{n}x_{0}$ for all $n\in \mathbb{N}.$ %
Regarding that $T$ is %
non-decreasing mapping, we have that $x_{0}\preceq Tx_{0}=x_{1}$, %
which implies that $x_{1}=Tx_{0}\preceq Tx_{1}=x_{2}$. %
Inductively, %
 we have %
\begin{equation}\label{3}%
x_{0}\preceq x_{1}\preceq x_{2}\preceq \cdots \preceq x_{n-1} %
\preceq x_{n}\preceq x_{n+1}\preceq \cdots. %
\end{equation}%
	
Assume that there exists $n_{0}\in\mathbb{N}$ %
such that $x_{n_{0}}=x_{n_{0}+1}$. Since %
$x_{n_{0}}=x_{n_{0}+1}=Tx_{n_{0}}$, then %
$x_{n_{0}}$ is the fixed point of $T$. %
Now suppose that $x_{n} \precneqq x_{n+1}$ for %
all $n\in\mathbb{N}$, thus  by inequality%
\ref{3}, we have that %
\begin{equation}\label{4}%
x_{0}\prec x_{1}\prec x_{2}\prec \cdots %
\prec x_{n-1}\prec x_{n}\prec x_{n+1}\prec \cdots. %
\end{equation}%
Now for each $\lambda>0,$ and $x_{0}\prec %
Tx_{0}$ for all $n\in\mathbb{N}$ %
implies that $\omega^{G}_{\lambda}(x_{0},Tx_{0},Tx_{0})>0.$ %
Again, let $x_{0}\in X_{\omega^{G}}$  such that %
$\omega^{G}_{\lambda}(x_{0},Tx_{0},Tx_{0}) %
<\infty~\forall~\lambda>0$.  %
\newline
(b)	
Now, we show  that for all $n\in \mathbb{N}$,  %
$\omega^{G}_{\lambda}(T^{n}x_{0},T^{n+1}x_{0},T^{n+1}x_{0})= 0$ %
for all $\lambda>0$, as  $n\rightarrow\infty$. It implies that %
the sequence $\{T^{n}x_{0}\}_{n\in\mathbb{N}}$ is asymptotically %
$T$-regular on some point $x_{0}\in X_{\omega^{G}}$. Assume that, %
for each $n\in\mathbb{N}$, there exists $\lambda_{n}>0$ such that %
$\omega^{G}_{\lambda_{n}}(T^{n}x_{0},T^{n+1}x_{0},T^{n+1}x_{0}) %
\neq 0$. %
Otherwise we are done. For each $n\ge 1$, if $0<\lambda<\lambda_{n}$, %
then we have $\omega^{G}_{\lambda}(T^{n}x_{0},T^{n}x_{0},T^{n+1}x_{0})\neq 0$. %
Since $T^{n}x_{0}\preceq T^{n+1}x_{0}$, we have from inequality \ref{o} that %
$\psi(\omega^{G}_{\lambda_{n}}(T^{n}x_{0},T^{n+1}x_{0},T^{n+1}x_{0}))\le  %
\psi(\omega^{G}_{\lambda}(T^{n}x_{0},T^{n+1}x_{0},T^{n+1}x_{0}))= %
\psi(\omega^{G}_{\lambda}(TT^{n-1}x_{0},TT^{n}x_{0},TT^{n}x_{0}))$. %
Take $x=T^{n-1}x_{0}$  and $ y=T^{n}x_{0}=z$, then inequality \ref{o} %
becomes %
\begin{align}\label{p}%
\psi(\omega^{G}_{\lambda_{n}}(T^{n}x_{0}, %
T^{n+1}x_{0},T^{n+1}x_{0}))\notag %
\le& %
\psi(\omega^{G}_{\lambda}(T^{n}x_{0}, %
T^{n+1}x_{0},T^{n+1}x_{0}))\\\notag %
\le& %
\kappa_{1}(\psi(\omega^{G}_{\lambda}( %
T^{n-1}x_{0},T^{n}x_{0},T^{n}x_{0})))\notag %
\psi(\omega^{G}_{\lambda+\nu(\lambda)}( %
T^{n-1}x_{0},T^{n}x_{0},T^{n}x_{0}))\\ \notag %
+&\kappa_{2}(\psi(\omega^{G}_{\lambda} %
(T^{n-1}x_{0},T^{n}x_{0},T^{n}x_{0})))\notag %
\psi(\omega^{G}_{\lambda}(T^{n-1}x_{0}, %
TT^{n-1}x_{0},TT^{n-1}x_{0}))\\\notag %
+& \kappa_{3}(\psi(\omega^{G}_{\lambda} %
(T^{n-1}x_{0},T^{n}x_{0},T^{n}x_{0})))\notag %
\psi(\omega^{G}_{\lambda}(T^{n}x_{0}, %
TT^{n}x_{0},TT^{n}x_{0}))\\\notag %
+&\kappa_{4}(\psi(\omega^{G}_{\lambda} %
(T^{n-1}x_{0},T^{n}x_{0},T^{n}x_{0})))\notag %
\psi(\omega^{G}_{\lambda}(T^{n}x_{0}, %
TT^{n}x_{0},TT^{n}x_{0}))\\\notag %
=&  %
\kappa_{1}(\psi(\omega^{G}_{\lambda}( %
T^{n-1}x_{0},T^{n}x_{0},T^{n}x_{0})))\notag %
\psi(\omega^{G}_{\lambda+\nu(\lambda)}( %
T^{n-1}x_{0},T^{n}x_{0},T^{n}x_{0}))\\ \notag %
+&\kappa_{2}(\psi(\omega^{G}_{\lambda}( %
T^{n-1}x_{0},T^{n}x_{0},T^{n}x_{0})))\notag %
\psi(\omega^{G}_{\lambda}(T^{n-1}x_{0}, %
T^{n}x_{0},T^{n}x_{0}))\\\notag %
+& \kappa_{3}(\psi(\omega^{G}_{\lambda}( %
T^{n-1}x_{0},T^{n}x_{0},T^{n}x_{0})))\notag %
\psi(\omega^{G}_{\lambda}(T^{n}x_{0}, %
T^{n+1}x_{0},T^{n+1}x_{0}))\\\notag %
+&\kappa_{4}(\psi(\omega^{G}_{\lambda}( %
T^{n-1}x_{0},T^{n}x_{0},T^{n}x_{0})))\notag %
\psi(\omega^{G}_{\lambda}(T^{n}x_{0}, %
T^{n+1}x_{0},T^{n+1}x_{0}))\\\notag %
\le& %
\kappa_{1}(\psi(\omega^{G}_{\lambda}( %
T^{n-1}x_{0},T^{n}x_{0},T^{n}x_{0})))\notag %
\psi(\omega^{G}_{\lambda}(T^{n-1}x_{0}, %
T^{n}x_{0},T^{n}x_{0}))\\ \notag %
+&\kappa_{2}(\psi(\omega^{G}_{\lambda}( %
T^{n-1}x_{0},T^{n}x_{0},T^{n}x_{0})))\notag %
\psi(\omega^{G}_{\lambda}(T^{n-1}x_{0}, %
T^{n}x_{0},T^{n}x_{0}))\\\notag %
+& \kappa_{3}(\psi(\omega^{G}_{\lambda}( %
T^{n-1}x_{0},T^{n}x_{0},T^{n}x_{0})))\notag %
\psi(\omega^{G}_{\lambda}(T^{n}x_{0}, %
T^{n+1}x_{0},T^{n+1}x_{0}))\\ %
+&\kappa_{4}(\psi(\omega^{G}_{\lambda}( %
T^{n-1}x_{0},T^{n}x_{0},T^{n}x_{0}))) %
\psi(\omega^{G}_{\lambda}(T^{n}x_{0}, %
T^{n+1}x_{0},T^{n+1}x_{0})), %
\end{align}%

which implies that %

\begin{align}\label{q} %
\psi(\omega^{G}_{\lambda}(T^{n}x_{0}, %
T^{n+1}x_{0},T^{n+1}x_{0}))\notag %
\le&h\psi(\omega^{G}_{\lambda} %
(T^{n-1}x_{0},T^{n}x_{0},T^{n}x_{0}))\\\notag %
\le& %
\psi(\omega^{G}_{\lambda}(T^{n-1}x_{0}, %
T^{n}x_{0},T^{n}x_{0}))\\\notag %
\vdots\\\notag %
\le& %
\psi(\omega^{G}_{\lambda}(x_{0},Tx_{0},Tx_{0}))\\ %
<&\infty, %
\end{align}%
where %
\begin{equation}%
h:=\dfrac{\kappa_{1}(\psi(\omega^{G}_{\lambda}%
(T^{n-1}x_{0},T^{n}x_{0},T^{n}x_{0}))) %
+\kappa_{2}(\psi(\omega^{G}_{\lambda} %
(T^{n-1}x_{0},T^{n}x_{0},T^{n}x_{0})))}{1-(\kappa_{3} %
(\psi(\omega^{G}_{\lambda}(T^{n-1}x_{0}, %
T^{n}x_{0},T^{n}x_{0})))+\kappa_{4}(\psi( %
\omega^{G}_{\lambda}(T^{n-1}x_{0},T^{n}x_{0},T^{n}x_{0}))))}. %
\end{equation}	%
Therefore, $\{\psi(\omega^{G}_{\lambda}(T^{n}x_{0}, %
T^{n+1}x_{0},T^{n+1}x_{0}))\}_{n\ge 1}$ is non-increasing  %
and bounded below. So, the sequence converges to some  %
real number $M\ge 0$, by \autoref{3.1}, the proof is completed. %

\end{proof}%
	
\begin{theorem}\label{nn}%
Let $(X,\omega^{G},\preceq)$ be a complete preordered modular %
G-metric space. %
Let $T:X_{\omega^{G}}\rightarrow X_{\omega^{G}}$ %
be a non-decreasing, asymptotically $T$-regular map on %
some point $x_{0}\in X_{\omega^{G}}$%
such that for each $\lambda>0$ and some positive integer $m\ge 1$, %
there is $\nu(\lambda)\in [0,\lambda)$ such that the  %
following conditions hold: %
\begin{enumerate}%
\item[(1)]	
\begin{align}\label{oo}%
\psi(\omega^{G}_{\lambda}(T^{m}x,T^{m}y,T^{m}z))\notag %
\le& %
\kappa_{1}(\psi(\omega^{G}_{\lambda}(x,y,z))) %
\psi(\omega^{G}_{\lambda+\nu(\lambda)}(x,y,z))\\ \notag %
+&\kappa_{2}(\psi(\omega^{G}_{\lambda}(x,y,z))) %
\psi(\omega^{G}_{\lambda}(x,T^{m}x,T^{m}x))\\\notag %
+& \kappa_{3}(\psi(\omega^{G}_{\lambda}(x,y,z))) %
\psi(\omega^{G}_{\lambda}(y,T^{m}y,T^{m}y))\\ %
+&\kappa_{4}(\psi(\omega^{G}_{\lambda}(x,y,z))) %
\psi(\omega^{G}_{\lambda}(z,T^{m}z,T^{m}z)), %
\end{align}%
where $\psi\in \bar{\Psi}$ and $\{\kappa_{1}, %
\kappa_{2}, \kappa_{3},\kappa_{4}\}\in \mathcal{S}_{Ger}$ with %
$\kappa_{1}(t)+2\max\{\sup_{t\geq 0}\kappa_{2}(t), %
\sup_{t\geq 0}\kappa_{3}(t),\\\sup_{t\geq 0}\kappa_{4}(t)\}<1,$ %
and $x,y,z\in X_{\omega^{G}}.$ %
Assume that if a non-decreasing sequence %
$\{x_{n}\}_{n\in\mathbb{N}}$ converges to $x$, then $x_{n} %
\preceq x$ for each $n\in\mathbb{N}$; %
\item[(2)] if $\psi$ is sub-additive and for any %
$x,y,z\in X_{\omega^{G}}$, there exists $w\in X_{\omega^{G}}$ %
with $w\preceq Tw$ and $\omega^{G}_{\lambda}(w,Tw,Tw)$ is %
finite  for all $\lambda>0$ such that $w$ is comparable to %
both $x~{\rm ,}~y~ and~z$. %
Then $T$ has a  fixed point $x\in X_{\omega^{G}}$ for some positive %
integer $m\ge 1$ in $X_{\omega^{G}}$ and the sequence define %
by $\{T^{n}x_{0}\}_{n\ge 1}$ converges to $x$. %
Furthermore, the fixed point of $T$ is unique. %
\end{enumerate}%
\end{theorem}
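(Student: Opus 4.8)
The plan is to reduce \autoref{nn} to Corollary~\ref{n} applied to the iterate $S:=T^{m}$, and then to recover a fixed point of $T$ from one of $S$ exactly as in the proof of \autoref{3.2}. The key observation is that inequality \ref{oo} is, read with $S$ in the role of the map ``$T$'', literally the contractive hypothesis \ref{o} of Corollary~\ref{n}: its left-hand side is $\psi(\omega^{G}_{\lambda}(Sx,Sy,Sz))$ and the four terms on the right involve $\psi(\omega^{G}_{\lambda+\nu(\lambda)}(x,y,z))$, $\psi(\omega^{G}_{\lambda}(x,Sx,Sx))$, $\psi(\omega^{G}_{\lambda}(y,Sy,Sy))$ and $\psi(\omega^{G}_{\lambda}(z,Sz,Sz))$, with the very same $\kappa_{1},\kappa_{2},\kappa_{3},\kappa_{4}\in\mathcal{S}_{Ger}$ and the same admissibility bound $\kappa_{1}(t)+2\max\{\sup_{t\geq 0}\kappa_{2}(t),\sup_{t\geq 0}\kappa_{3}(t),\sup_{t\geq 0}\kappa_{4}(t)\}<1$.

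First I would check that $S=T^{m}$ satisfies the standing hypotheses of Corollary~\ref{n}. Since $T$ is non-decreasing, the composition $S=T^{m}$ is non-decreasing, and the comparability condition (2) transfers verbatim, the same witness $w$ with $w\preceq Tw$ being comparable to $x,y,z$ while its $S$-orbit is a subsequence of its $T$-orbit. The asymptotic $S$-regularity nominally required is not an extra assumption: exactly as in the chain \ref{p}--\ref{q} in the proof of Corollary~\ref{n}, the contraction \ref{oo} forces $\{\psi(\omega^{G}_{\lambda}(S^{n}x_{0},S^{n+1}x_{0},S^{n+1}x_{0}))\}_{n\ge1}$ to be non-increasing, bounded below by $0$, and hence convergent to $0$. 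Thus Corollary~\ref{n} applies to $S$ and yields a unique $u_{\ast}\in X_{\omega^{G}}$ with $T^{m}u_{\ast}=u_{\ast}$, together with $S^{n}x_{0}=T^{mn}x_{0}\to u_{\ast}$.

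Next I would pass from $S$ back to $T$ by the bootstrap of \autoref{3.2}. Applying $T$ to $T^{m}u_{\ast}=u_{\ast}$ gives $T^{m}(Tu_{\ast})=T(T^{m}u_{\ast})=Tu_{\ast}$, so $Tu_{\ast}$ is also a fixed point of $T^{m}$; the uniqueness just obtained forces $Tu_{\ast}=u_{\ast}$, so $u_{\ast}$ is a fixed point of $T$. Conversely, any fixed point $v$ of $T$ satisfies $T^{m}v=v$, hence is a fixed point of $T^{m}$ and equals $u_{\ast}$, which yields uniqueness for $T$. For the convergence of the full orbit $\{T^{n}x_{0}\}$ (and not merely of the residue-$0$ subsequence $\{T^{mn}x_{0}\}$), I would place $T^{n}x_{0}$ next to the nearest multiple-of-$m$ iterate and use the triangle-type estimates of Proposition~\ref{2.2} together with property (5) of Definition~\ref{2.1}: the gap $\omega^{G}_{\lambda}(T^{n}x_{0},T^{\ell m}x_{0},T^{\ell m}x_{0})$ with $0\le \ell m-n<m$ is dominated by a fixed number (at most $m-1$) of consecutive one-step gaps $\omega^{G}_{\mu}(T^{k}x_{0},T^{k+1}x_{0},T^{k+1}x_{0})$, each of which tends to $0$ by the asymptotic $T$-regularity established in step (b) of \autoref{3.1}; combining with $T^{\ell m}x_{0}\to u_{\ast}$ gives $T^{n}x_{0}\to u_{\ast}$.

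The main obstacle is precisely this last point. The reduction to $S=T^{m}$ and the algebraic bootstrap are immediate pattern-matching, but upgrading the convergence of the subsequence $\{T^{mn}x_{0}\}$ to that of the whole sequence genuinely needs the asymptotic $T$-regularity of $T$ (not of $T^{m}$) fed through the rectangle inequality; the subadditivity of $\psi$ and the finiteness $\omega^{G}_{\lambda}(x_{0},Tx_{0},Tx_{0})<\infty$ are what keep these extended-valued sums meaningful and make each one-step gap controllable.
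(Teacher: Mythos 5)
Your proposal follows essentially the same route as the paper: apply Corollary~\ref{n} to the iterate $T^{m}$ via inequality~\ref{oo}, then use $T^{m}(Tu)=T(T^{m}u)=Tu$ together with the uniqueness of the fixed point of $T^{m}$ to conclude $Tu=u$, and observe that any fixed point of $T$ is one of $T^{m}$ to get uniqueness for $T$. You are in fact more careful than the paper, which silently omits both the verification that $T^{m}$ inherits the hypotheses of Corollary~\ref{n} and the upgrade from convergence of the subsequence $\{T^{mn}x_{0}\}$ to convergence of the full orbit $\{T^{n}x_{0}\}$.
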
%

\begin{proof} %
By Corollary \ref{n}, $T^{m}$ has %
a  fixed point say $u\in X_{\omega^{G}}$ %
by using inequality \ref{oo} for some %
positive integer $m\ge 1$. Now %
$T^{m}(Tu)=T^{m+1}u=T(T^{m}u)=Tu$, %
so $Tu$ is a fixed point of $T^{m}$. %
By the uniqueness of fixed point of %
$T^{m}$, we have $Tu=u$. Therefore, %
$u$ is a fixed point of $T$. Since %
fixed point of $T$ is also fixed %
point of $T^{m}$, hence $T$ has a %
unique fixed point in $X_{\omega^{G}}$. %

\end{proof}%

\begin{theorem}\label{} %
Let $(X,\omega^{G},\preceq)$ be a complete  preordered %
modular G-metric space. %
Let $T:X_{\omega^{G}}\rightarrow X_{\omega^{G}}$ %
be a non-decreasing, asymptotically $T$-regular map on %
some point $x_{0}\in X_{\omega^{G}}$
such that for each $\lambda>0$ %
and  for some positive integer $m\ge 1$, there is %
$\nu(\lambda)\in [0,\lambda)$ such that the following %
conditions hold: %
\begin{align}\label{k}
(1)~ h_{n}\notag%
\le&%
\kappa_{1}(\psi(\omega^{G}_{\lambda}(x,y,z)))%
\psi(\omega^{G}_{\lambda+\nu(\lambda)}(x,y,z)) \notag%
+\kappa_{2}(\psi(\omega^{G}_{\lambda}(x,y,z)))%
\psi(\omega^{G}_{\lambda}(x,T^{m}x,T^{m}x))\\ \notag%
+& \kappa_{3}(\psi(\omega^{G}_{\lambda}(x,y,z)))%
\psi(\omega^{G}_{\lambda}(y,T^{m}y,T^{m}y)) \notag%
+\kappa_{4}(\psi(\omega^{G}_{\lambda}(x,y,z)))%
\psi(\omega^{G}_{\lambda}(z,T^{m}z,T^{m}z))\\ \notag%
+&\kappa_{5}(\psi(\omega^{G}_{\lambda}(x,y,z)))%
\psi(\omega^{G}_{\lambda}(T^{m}x,y,z)) \notag%
+\kappa_{6}(\psi(\omega^{G}_{\lambda}(x,y,z)))\\\notag%
\times&\psi\Set{\frac{\omega^{G}_{\lambda}(y,T^{m}y,T^{m}y)%
[1+\omega^{G}_{\lambda}(x,y,z)]}%
{1+\omega^{G}_{\lambda}(x,T^{m}x,T^{m}x)}}\\ \notag%
+&\kappa_{7}(\psi(\omega^{G}_{\lambda}(x,y,z)))\\\notag%
\times&\psi\Set{\frac{\omega^{G}_{\lambda}(y,T^{m}y,T^{m}y)%
[1+\omega^{G}_{\lambda}(x,T^{m}x,T^{m}x)]}%
{1+\omega^{G}_{\lambda}(x,y,z)}}\\ \notag%
+&\kappa_{8}(\psi(\omega^{G}_{\lambda}(x,y,z)))\\\notag%
\times&\psi\Set{\frac{\omega^{G}_{\lambda}(x,T^{m}x,T^{m}x)%
[1+\omega^{G}_{\lambda}(x,y,z)]}%
{1+\omega^{G}_{\lambda}(y,T^{m}x,T^{m}x)%
+\omega^{G}_{\lambda}(x,y,z)}}\\ \notag%
+&\kappa_{9}(\psi(\omega^{G}_{\lambda}(x,y,z)))\\\notag%
\times&\psi\Set{\frac{\omega^{G}_{\lambda}(x,y,z)%
[1+\omega^{G}_{\lambda}(y,T^{m}y,T^{m}y)]}%
{1+\omega^{G}_{\lambda}(T^{m}x,T^{m}x,y)%
+\omega^{G}_{\lambda}(T^{m}x,T^{m}y,T^{m}y)}}\\ \notag%
+&\kappa_{10}(\psi(\omega^{G}_{\lambda}(x,y,z)))\\\notag%
\times&\psi\Set{\frac{\omega^{G}_{\lambda}%
(T^{m}x,T^{m}y,T^{m}y)[1+\omega^{G}_{\lambda}(x,T^{m}y,T^{m}y)]}%
{1+\omega^{G}_{\lambda}(T^{m}x,T^{m}x,y)%
+\omega^{G}_{\lambda}(x,T^{m}y,T^{m}y)}}\\ \notag%
+&\kappa_{11}(\psi(\omega^{G}_{\lambda}(x,y,z)))\\%
\times&\psi\Set{\frac{\omega^{G}_{\lambda}%
(T^{m}x,T^{m}y,T^{m}y)%
[1+\omega^{G}_{\lambda}(y,T^{m}y,T^{m}y)%
+\omega^{G}_{\lambda}(T^{m}x,T^{m}x,y)]}%
{1+\omega^{G}_{\lambda}(T^{m}x,T^{m}x,y)%
+\omega^{G}_{\lambda}(T^{m}x,T^{m}y,T^{m}y)}},%
\end{align}%
where $h_{n}=\psi(\omega^{G}_{\lambda}(T^{m}x,T^{m}y,T^{m}z))$, %
$\psi\in \bar{\Psi}$ , $\{\kappa_{1},\kappa_{2}, \cdots, \kappa_{11}\} %
\in \mathcal{S}_{Ger}$ and %
$\max\{\sup_{t\geq 0}\kappa_{1}(t),\\\sup_{t\geq 0}\kappa_{2}(t), %
\cdots, \sup_{t\geq 0}\kappa_{11}(t)\}<1.$ %
Assume that if a non-decreasing sequence $\{x_{n}\}_{n\in\mathbb{N}}$ %
converges to $x^{\ast}$, then $x_{n}\preceq x^{\ast}$ for each %
$n\in\mathbb{N}$,%
\newline %
$(2)$ if $\psi$ is sub-additive and for any $x,y,z\in X_{\omega^{G}}$, %
there exists $u\in X_{\omega^{G}}$ with $u\preceq Tu$ and %
$\omega^{G}_{\lambda}(u,Tu,Tu)$ is finite  for all $\lambda>0$ %
such that $u$ is comparable to  $x,y~{\rm and}~z$. %
Then $T$ has a  fixed point $x^{\ast}\in X_{\omega^{G}}$ %
for some positive integer %
$m\ge 1$ in $X_{\omega^{G}}$ and the sequence define by %
$\{T^{n}x_{0}\}_{n\ge 1}$ converges to $x_{\ast}$. %
Furthermore, the fixed point of $T$ is unique. %
\end{theorem}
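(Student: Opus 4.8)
The strategy is to deduce this statement from \autoref{qq} by applying that theorem not to $T$ but to the iterate $S:=T^{m}$. The crucial observation is that the contractive hypothesis \ref{k} is, term for term, identical to the hypothesis \ref{aa} of \autoref{qq} once every occurrence of $T$ in \ref{aa} is replaced by $S=T^{m}$. Indeed $h_{n}=\psi(\omega^{G}_{\lambda}(T^{m}x,T^{m}y,T^{m}z))=\psi(\omega^{G}_{\lambda}(Sx,Sy,Sz))$, while each of the eleven right-hand terms, such as $\psi(\omega^{G}_{\lambda}(x,T^{m}x,T^{m}x))$ or $\psi(\omega^{G}_{\lambda}(y,T^{m}y,T^{m}y))$, arises from the corresponding term of \ref{aa} under the same substitution; the Geraghty multipliers $\kappa_{1},\dots,\kappa_{11}\in\mathcal{S}_{Ger}$ and the bound $\max\{\sup_{t\ge0}\kappa_{i}(t)\}<1$ are untouched.

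First I would verify that $S=T^{m}$ inherits the structural hypotheses of \autoref{qq}. Because $T$ is non-decreasing, the composition $T^{m}$ is non-decreasing, so $S$ is non-decreasing. The order-continuity assumption, that every non-decreasing sequence converging to $x^{\ast}$ satisfies $x_{n}\preceq x^{\ast}$, is a property of $\preceq$ and the topology alone and is therefore unaffected by replacing $T$ with $S$. Condition (2) is already stated for three points $x,y,z$ with a comparable element $u$ satisfying $u\preceq Tu$ and $\omega^{G}_{\lambda}(u,Tu,Tu)<\infty$, matching exactly what \autoref{qq} requires. The asymptotic regularity of $S$ at $x_{0}$ that launches the iteration is not a separate burden: it is extracted inside the proof of \autoref{qq} from the contractive inequality itself, via the choice $y=z$, precisely as in part (b) of that proof applied now to \ref{k}.

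Granting these checks, \autoref{qq} applies to $S=T^{m}$ and delivers a unique fixed point $x^{\ast}\in X_{\omega^{G}}$ of $T^{m}$, toward which the orbit converges. The closing move is the standard iterate-to-map passage already used verbatim in \autoref{3.2} and \autoref{nn}: since $T^{m}(Tx^{\ast})=T(T^{m}x^{\ast})=Tx^{\ast}$, the point $Tx^{\ast}$ is itself a fixed point of $T^{m}$, so uniqueness of the $T^{m}$-fixed point forces $Tx^{\ast}=x^{\ast}$, making $x^{\ast}$ a fixed point of $T$. Conversely, any fixed point of $T$ is automatically a fixed point of $T^{m}$, whence the uniqueness statement for $T^{m}$ transfers to $T$.

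The only real obstacle is bookkeeping rather than conceptual: one must confirm that the substitution $T\mapsto T^{m}$ sends \ref{aa} onto \ref{k} in each of the eleven terms without disturbing the admissible constants. Once this correspondence is checked, the conclusion is an immediate invocation of \autoref{qq} followed by the three-line iterate argument, so I expect no genuine analytic difficulty beyond that already resolved in \autoref{3.1} and \autoref{qq}.
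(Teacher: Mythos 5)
Your proposal follows exactly the route the paper takes: invoke \autoref{qq} with $T$ replaced by the iterate $T^{m}$ (the hypothesis \ref{k} being \ref{aa} under that substitution) to obtain a unique fixed point of $T^{m}$, then use the identity $T^{m}(Tw_{\ast})=T(T^{m}w_{\ast})=Tw_{\ast}$ together with uniqueness to conclude $Tw_{\ast}=w_{\ast}$. Your additional checks that $T^{m}$ inherits monotonicity and that condition (2) transfers are more careful than the paper's own two-line argument, but the approach is the same.
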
 %

\begin{proof} %
By \autoref{qq}, $T^{m}$ has a  fixed point %
say $w_{\ast}\in X_{\omega^{G}}$ by using inequality %
\ref{k} for some positive integer $m\ge 1$. Now $T^{m}(Tw_{\ast}) %
=T^{m+1}w_{\ast}=T(T^{m}w_{\ast})=Tw_{\ast}$, %
so $Tw_{\ast}$ is a fixed point of $T^{m}$. %
By the uniqueness of fixed point of $T^{m}$, we have %
$Tw_{\ast}=w_{\ast}$. Therefore, $w_{\ast}$ is a %
fixed point of $T$. Since fixed point of $T$ is also %
fixed point of $T^{m}$, hence $T$ has a unique fixed %
point in $X_{\omega^{G}}$. %

\end{proof}%


\section{Applications to nonlinear integral equations} %
We construct a system of nonlinear integral %
equations that will satisfies the conditions of %
\autoref{3.1} as follows. %

Consider the following nonlinear integral equations %

\begin{equation}\label{L} %
u(t)=\int_{0}^{\Omega}H_{1}(t,s,u(s))ds+\mu_{1}(t), %
\end{equation} %
and %

\begin{equation}\label{LL} %
u(t)=\int_{0}^{\Omega}H_{2}(t,s,u(s))ds+\mu_{2}(t),~~\forall~~t\in[0,\Omega], %
\end{equation} %

where $H_{1},H_{2}:[0,\Omega]\times[0,\Omega]\times\mathbb{R}\rightarrow %
\mathbb{R}$ and $\mu:\mathbb{R}\to\mathbb{R}$ are continuous, $\Omega>0$. %
For all $t,s\in[0,\Omega]$, take %

\begin{equation} %
	H_{1}(t,s,u(t))\le %
H_{2}\biggl(t,s,\int_{0}^{\Omega}H_{1}(s,z,u(z))ds+\mu_{1}(s)\biggr) %
\end{equation} %

and %
 \begin{equation}%
H_{2}(t,s,u(t))\le H_{1}\biggl(t,s,\int_{0}^{\Omega}H_{2}(s,z,u(z))ds %
+\mu_{2}(s)\biggr). %
\end{equation} %

Again let $X_{\omega^{G}}=C[0,\Omega]$ be the %
space of continuous functions %
defined on $[0,\Omega]$. $X_{\omega^{G}}=C[0,\Omega]$  %
can also be equipped with a %
preorder  $\preceq$ given by for all $x,y\in %
 X_{\omega^{G}}$ %
given by $x\preceq y\iff x(t)\le y(t)$ for all %
 $t\in [0,\Omega]$. %
Suppose that there exists a continuous functions %
$B:[0,\Omega]\times[0,\Omega]\to\mathbb{R}_{+}$, %
 $D_{k}:[0,\Omega]\times[0,\Omega]\rightarrow %
\mathbb{R}_{+}$ and $\kappa_{k}:[0,\infty)\rightarrow [0,1)$  %
for all $k\ge 1$ such that %

\begin{equation} %
\norm{H_{1}(t,s,x)-H_{2}(t,s,y)}\le \sum_{k=1}^{n}B(t,s) %
\dfrac{\sin\norm{\frac{1}{\lambda}(x-y)}}{ %
\norm{\frac{1}{\lambda}(x-y)}}D_{k}(x,y), %
\end{equation} %

where $$\sup_{t\in [0,\Omega]}\int_{0}^{\Omega} %
B^2(t,s)ds\le \dfrac{1}{\Omega},$$ $\lambda>0$.  %

Now for any $\lambda>0$, we define %

\begin{equation}\label{0}%
\omega^{G}_{\lambda}(x,y,z):=\frac{1}{2(1+\lambda)} %
\sup_{t\in[0,\Omega]}\{\norm{x(t)-y(t)}+ %
\norm{y(t)-z(t)}+\norm{x(t)-z(t)}\}, %
\end{equation} %

so that %

\begin{equation}\label{00}%
\omega^{G}_{\lambda}(x,y,y):=\frac{1}{(1+\lambda)} %
\sup_{t\in[0,\Omega]}\{\norm{x(t)-y(t)}\}. %
\end{equation}%

In fact \autoref{0} and \autoref{00} satisfies all %
the conditions in Definition \ref{2.1} endowed with %
 $X_{\omega^{G}}=(X_{\omega^{G}},\omega^{G}) %
=C([0,\Omega],[0,\Omega]).$ %

Let %
\begin{equation}\label{12}%
G_{u}=\int_{0}^{\Omega}H_{1}(t,s,u(s))ds-\mu_{1}(t), %
\end{equation}
and \begin{equation}\label{13}
G_{v}=\int_{0}^{\Omega}H_{2}(t,s,v(s))ds-\mu_{2}(t), %
\end{equation}

Now, for distinct $x,y\in X_{\omega^{G}}$, let %
 $W=\frac{1}{1+\lambda}\sup_{t\in[0,\Omega]}%
\bigl\{\norm{G_{x}(t)+\mu_{1}(t)-G_{y}(t)-\mu_{2}(t)}\bigr\}>0$, so that by %
 Cauchy-Schwartz inequality, %
\begin{align}\label{4.1}%
W\notag %
=&\frac{1}{1+\lambda}\sup_{t\in[0,\Omega]} %
\biggl\{\norm{\int_{0}^{\Omega}H_{1}(t,s,x(s))ds %
-\int_{0}^{\Omega}H_{2}(t,s,y(s))ds}\biggr\}\\\notag %
\le&\frac{1}{1+\lambda}\sup_{t\in[0,\Omega]}\biggl\{\int_{0}^{\Omega} %
\norm{H_{1}(t,s,x(s))-H_{2}(t,s,y(s))}ds\biggr\}\\\notag %
\le&\frac{1}{1+\lambda}\sup_{t\in[0,\Omega]}%
\biggl\{\int_{0}^{\Omega}\sum_{k=1}^{n}B(t,s) %
\sqrt{\dfrac{\sin^{2}\norm{\frac{1}{\lambda}(x(s)-y(s))}} %
{\norm{\frac{1}{\lambda}(x(s)-y(s))}^{2}}}D_{k}(x(s),y(s))ds\biggr\}\\\notag %
\le&\frac{1}{1+\lambda}\sup_{t\in[0,\Omega]} %
\biggl\{\sum_{k=1}^{n}\int_{0}^{\Omega}B(t,s) %
\sqrt{\dfrac{\sin^{2}\norm{\frac{1}{\lambda}(x(s)-y(s))}} %
{\norm{\frac{1}{\lambda}(x(s)-y(s))}^{2}}}D_{k}(x(s),y(s))ds\biggr\}\\\notag %
\le&\frac{1}{1+\lambda}\sup_{t\in[0,\Omega]}\sum_{k=1}^{n} %
\biggl\{\bigg(\int_{0}^{\Omega}B(t,s)^{2}ds\bigg)^{\frac{1}{2}}\\\notag %
\times&\bigg(\int_{0}^{\Omega}\dfrac{\sin^{2}\norm{ %
\frac{1}{\lambda}(x(s)-y(s))}}{\norm{\frac{1}{\lambda}(x(s)-y(s))}^{2}} %
D_{k}(x(s),y(s))^{2}ds\bigg)^{\frac{1}{2}}\biggr\}\\\notag %
\le&\frac{1}{1+\lambda}\sup_{t\in[0,\Omega]}\sum_{k=1}^{n} %
\biggl\{\bigg(\dfrac{1}{\Omega}\bigg)^{\frac{1}{2}}\notag %
\bigg(\int_{0}^{\Omega}\dfrac{\sin^{2}\norm{ %
\frac{1}{\lambda}(x(s)-y(s))}}{\norm{\frac{1}{\lambda}(x(s)-y(s))}^{2}} %
D_{k}(x(s),y(s))^{2}ds\bigg)^{\frac{1}{2}}\biggr\}\\\notag %
\le&\frac{1}{1+\lambda}\sup_{t\in[0,\Omega]}\sum_{k=1}^{n} %
\biggl\{\bigg(\dfrac{1}{\Omega}\bigg)^{\frac{1}{2}}\notag %
\bigg(\dfrac{\sin\omega^{G}_{\lambda}(x,y,y)}{\omega^{G}_{\lambda}(x,y,y)} %
D_{k}(x,y)\sqrt{\Omega}\bigg)\biggr\}\\ %
=&\frac{1}{1+\lambda}\sup_{t\in[0,\Omega]}\sum_{k=1}^{n} %
\biggl\{\bigg(\dfrac{\sin\omega^{G}_{\lambda}(x,y,y)} %
{\omega^{G}_{\lambda}(x,y,y)}D_{k}(x,y)\bigg)\biggr\}. %
\end{align}%

Now, take $\omega^{G}_{\lambda}(x,y,y):=\theta$, so that %
$\frac{\sin\theta}{\theta}\rightarrow 1$ as  %
$\theta\rightarrow 0$. This agrees  %
with the Definition \ref{2.2}. Hence for all  %
$k\in\mathbb{N}$,  %
$\kappa_{k}(\theta)=\frac{\sin\theta}{\theta}$. %
Therefore, from inequality \ref{4.1}, we get %

\begin{equation}%
\frac{1}{1+\lambda}\sup_{t\in[0,\Omega]} %
\bigl\{\norm{G_{x}(t)+\mu_{1}(t)-G_{y}(t)-\mu_{2}(t)}\bigr\}\le %
\sup_{t\in[0,\Omega]}\sum_{k=1}^{n} %
\biggl\{\kappa_{k}\big(\omega^{G}_{\lambda}(x,y,y)\big)D_{k}(x,y)(t)\biggr\}. %
\end{equation}%

 For the application here, we have %
 \begin{equation}\label{z} %
 \frac{1}{1+\lambda}\sup_{t\in[0,\Omega]} %
 \bigl\{\norm{G_{x}(t)+\mu_{1}(t)-G_{y}(t)-\mu_{2}(t)}\bigr\}\le %
 \sup_{t\in[0,\Omega]}\sum_{k=1}^{11} %
 \biggl\{\kappa_{k}\big(\omega^{G}_{\lambda}(x,y,y)\big)D_{k}(x,y)(t)\biggr\}, %
 \end{equation}%

where %
\begin{equation*}%
D_{1}(x,y)(t)=\frac{1}{1+\lambda+\nu(\lambda)} %
\norm{x(t)-y(t)} %
\end{equation*}%

 for $\nu(\lambda)\in [0,\lambda).$ %

\begin{equation*}%
D_{2}(x,y)(t)=\dfrac{1}{1+\lambda} %
\norm{G_{x}(t)+\mu_{1}(t)-x(t)}. %
\end{equation*} %

\begin{equation*}%
D_{3}(x,y)(t)=\dfrac{1}{1+\lambda} %
\norm{G_{y}(t)+\mu_{2}(t)-y(t)}. %
\end{equation*}%

\begin{equation*}%
D_{4}(x,y)(t)=\dfrac{1}{1+\lambda} %
\norm{G_{x}(t)+\mu_{1}(t)-G_{y}(t)-\mu_{2}(t)} %
\end{equation*}%

\begin{equation*}%
D_{5}(x,y)(t)=\dfrac{1}{1+\lambda} %
\norm{G_{x}(t)+\mu_{1}(t)-y(t)}. %
\end{equation*}%

\begin{equation*}%
D_{6}(x,y)(t)=\dfrac{\frac{1}{1+\lambda} %
\norm{G_{y}(t)+\mu_{2}(t)-y(t)}\bigg(1+\frac{1}{1+\lambda} %
\norm{x(t)-y(t)}\bigg)}{1+\frac{1}{1+\lambda}\norm{G_{x}(t) %
+\mu_{1}(t)-x(t)}}. %
\end{equation*}%

\begin{equation*}%
D_{7}(x,y)(t)=\dfrac{\frac{1}{1+\lambda} %
\norm{G_{y}(t)+\mu_{2}(t)-y(t)}\bigg(1+ %
\frac{1}{1+\lambda}\norm{G_{x}(t)+\mu_{1}(t)-x(t)}\bigg)} %
{1+\frac{1}{1+\lambda}\norm{x(t)-y(t)}}. %
\end{equation*}%

\begin{equation*}%
D_{8}(x,y)(t)=\dfrac{\frac{1}{1+\lambda}\norm{G_{x}(t) %
+\mu_{1}(t)-x(t)}\bigg(1+\frac{1}{1+\lambda} %
\norm{x(t)-y(t)}\bigg)}{1+\frac{1}{1+\lambda} %
\bigg(\norm{G_{x}(t)+\mu_{1}(t)-y(t)}+\norm{x(t)-y(t)}\bigg)}. %
\end{equation*}%

\begin{equation*}%
D_{9}(x,y)(t)=\dfrac{\frac{1}{1+\lambda} %
\norm{x(t)-y(t)}\bigg(1+\frac{1}{1+\lambda} %
\norm{G_{y}(t)+\mu_{2}(t)-y(t)}\bigg)}{1+ %
\frac{1}{1+\lambda}\bigg(\norm{G_{x}(t) %
+\mu_{1}(t)-y(t)}+\norm{G_{x}(t)+\mu_{1}(t)-G_{y}(t) %
-\mu_{2}(t)}\bigg)}. %
\end{equation*}%

\begin{equation*}%
D_{10}(x,y)(t)=\dfrac{\frac{1}{1+\lambda} %
\norm{G_{x}(t)+\mu_{1}(t)-G_{y}(t)-\mu_{2}(t)} %
\bigg(1+\frac{1}{1+\lambda}\norm{G_{y}(t)+ %
\mu_{2}(t)-x(t)}\bigg)}{1+\frac{1}{1+\lambda} %
\bigg(\norm{G_{x}(t)+\mu_{1}(t)-y(t)}+\norm{G_{y}(t) %
-\mu_{2}(t)-x(t)}\bigg)}. %
\end{equation*} %

$D_{11}(x,y)(t)=$ %
\begin{equation*}%
\dfrac{\frac{1}{1+\lambda}\norm{G_{x}(t) %
+\mu_{1}(t)-G_{y}(t)-\mu_{2}(t)}\bigg(1 %
+\frac{1}{1+\lambda}\norm{G_{y}(t)+\mu_{2}(t)-y(t)} %
+\norm{G_{x}(t)+\mu_{1}(t)-y(t)}\bigg)}{1+\frac{1} %
{1+\lambda}\bigg(\norm{G_{x}(t)+\mu_{1}(t)-y(t)} %
+\norm{G_{x}(t)+\mu_{1}(t)-G_{y}(t)-\mu_{2}(t)}\bigg)} %
\end{equation*}%


\begin{theorem}%
Let $X_{\omega^{G}}=C([0,\Omega],[0,\Omega])$ be a %
 complete preordered modular G-metric space  and  %
 $\omega^{G}:(0,\infty)\times X_{\omega^{G}}\times  %
 X_{\omega^{G}}\times X_{\omega^{G}}\to [0,\Omega]\cup\{\infty\}$  %
 be define by %
 \begin{equation}%
\omega^{G}_{\lambda}(x,y,y):=\frac{1}{(1+\lambda)} %
\sup_{t\in[0,\Omega]}\{\norm{x(t)-y(t)}\}, \lambda>0. %
\end{equation}%
 Let $G_{x}, G_{y}:[0,\Omega]\times[0,\Omega] %
 \times\mathbb{R}\rightarrow %
 \mathbb{R}$ and $\mu:\mathbb{R}\to\mathbb{R}$  %
 are continuous, $\Omega>0$. %
 For all $t,s\in[0,\Omega]$, are such that %
 $G_{x},G_{y}\in X_{\omega^{G}}$ for all %
 $x,y\in X_{\omega^{G}}$ and $G_{x}$, $G_{y}$ %
 satisfying \autoref{12} and \autoref{13} %
 respectively for all  $t,s\in[0,\Omega]$. %
 Suppose that there exists non-negative real numbers %
  $\kappa_{1},\kappa_{2},\cdots,\kappa_{11}>0$ with %
   $\sum_{k=1}^{11}\kappa_{k}(\psi(\omega^{G}_{ %
 \lambda}(x,y,y)))\in\mathcal{S}_{Ger}$ and %
 $\max\{\sup_{t\geq 0}\kappa_{1}(t),\sup_{t\geq 0}\kappa_{2}(t), %
 \cdots, \\\sup_{t\geq 0}\kappa_{11}(t)\}<1.$ %
 for all $\lambda>0$, $\psi\in \bar{\Psi}$ such %
 that inequality \ref{z} is satisfied for every  %
 $x,y\in X_{\omega^{G}}.$ Moreover if $\psi$ is %
 sub-additive and for any $x,y\in X_{\omega^{G}}$,  %
 there exists $u\in X_{\omega^{G}}$  such that $u$  %
 is comparable to both $x~{\rm and}~y$. %
Then the system of integral Equations \ref{L} and \ref{LL} %
have a unique solution in $X_{\omega^{G}}.$ %
\end{theorem}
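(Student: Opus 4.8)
The plan is to recast the solvability of the system \ref{L}--\ref{LL} as a fixed point problem for a single self-map of $X_{\omega^{G}}=C([0,\Omega],[0,\Omega])$ and then to invoke \autoref{3.1}. First I would define the operator $T:X_{\omega^{G}}\to X_{\omega^{G}}$ by $Tx(t)=\int_{0}^{\Omega}H_{1}(t,s,x(s))\,ds+\mu_{1}(t)$, so that the integral part of $Tx$ coincides with $G_{x}+\mu_{1}$ and a fixed point $u=Tu$ is precisely a continuous solution of \ref{L}; the coupling hypotheses relating $H_{1}$ and $H_{2}$ through \ref{12} and \ref{13} are then exactly what forces the same $u$ to solve \ref{LL}. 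Continuity of $H_{1}$ and $\mu_{1}$ guarantees $Tx\in C([0,\Omega],[0,\Omega])$, so $T$ is well defined, and the inequality $H_{1}(t,s,u)\le H_{2}\bigl(t,s,\int H_{1}+\mu_{1}\bigr)$ combined with the pointwise order $x\preceq y\iff x(t)\le y(t)$ for all $t\in[0,\Omega]$ yields that $T$ is non-decreasing.

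The heart of the argument is to verify the eleven-term contractive estimate \ref{a} for this $T$. Here I would reuse the Cauchy--Schwarz computation \ref{4.1}, which bounds $\tfrac{1}{1+\lambda}\sup_{t}\norm{G_{x}(t)+\mu_{1}(t)-G_{y}(t)-\mu_{2}(t)}$, i.e.\ $\omega^{G}_{\lambda}(Tx,Ty,Ty)$, by $\sup_{t}\sum_{k=1}^{11}\kappa_{k}(\omega^{G}_{\lambda}(x,y,y))D_{k}(x,y)(t)$ with $\kappa_{k}(\theta)=\frac{\sin\theta}{\theta}$, where the explicit kernels $D_{1},\dots,D_{11}$ are built so that, after taking suprema over $t\in[0,\Omega]$, each $D_{k}$ reproduces one of the eleven $\omega^{G}$-expressions occurring in \ref{a} (for instance $\sup_{t}D_{1}=\omega^{G}_{\lambda+\nu(\lambda)}(x,y,y)$, $\sup_{t}D_{2}=\omega^{G}_{\lambda}(x,Tx,Tx)$, $\sup_{t}D_{4}=\omega^{G}_{\lambda}(Tx,Ty,Ty)$, and likewise through the rational terms $D_{6},\dots,D_{11}$). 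Applying $\psi\in\bar{\Psi}$ to this inequality and using its sub-additivity then delivers exactly inequality \ref{a}. One must also confirm that $\kappa_{k}(\theta)=\sin\theta/\theta$ lies in $\mathcal{S}_{Ger}$, which is immediate since $\frac{\sin\theta}{\theta}\to1$ forces $\theta\to0$, and that the required bound $\max_{k}\sup_{t\ge0}\kappa_{k}(t)<1$ holds.

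Finally, I would check the two structural hypotheses of \autoref{3.1}: that a non-decreasing sequence convergent in $\tau(\omega^{G}_{\lambda})$ stays below its limit (a standard consequence of the pointwise order together with uniform convergence on the compact interval $[0,\Omega]$), and that for given $x,y$ there exists a comparable $z$ with $z\preceq Tz$ and $\omega^{G}_{\lambda}(z,Tz,Tz)$ finite for all $\lambda>0$ — the latter supplied by the comparability hypothesis in the statement together with the boundedness of the kernel on $[0,\Omega]\times[0,\Omega]$. With all hypotheses met, \autoref{3.1} furnishes a unique fixed point $u^{\ast}\in X_{\omega^{G}}$, and by construction $u^{\ast}$ is the unique common solution of \ref{L} and \ref{LL}. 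The main obstacle I anticipate is the bookkeeping in the middle paragraph: verifying that the eleven hand-built kernels $D_{k}$ genuinely collapse, under $\sup_{t}$ and then under $\psi$, onto the precise rational $\omega^{G}$-combinations of \ref{a}, since several of them (e.g.\ $D_{9},D_{10},D_{11}$) carry arguments such as $G_{y}-\mu_{2}-x$ against $G_{y}+\mu_{2}-x$ that must be reconciled with the modular G-metric identities of \autoref{2.2} before the passage to \ref{a} is legitimate.
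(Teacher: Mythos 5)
Your proposal follows essentially the same route as the paper: define $T$ on $C([0,\Omega],[0,\Omega])$ via $Tx=G_{x}+\mu_{1}$, use the Cauchy--Schwarz estimate \ref{4.1} together with the kernels $D_{1},\dots,D_{11}$ to pass from inequality \ref{z} to the contractive condition \ref{a}, and then invoke \autoref{3.1} to obtain the unique fixed point, hence the unique common solution of \ref{L} and \ref{LLL                                                                      }. The paper's own proof is in fact terser than yours (it simply asserts the hypotheses of \autoref{3.1} and writes out inequality \ref{a}), so your additional checks of well-definedness, monotonicity, and the identification of each $\sup_{t}D_{k}$ with the corresponding $\omega^{G}$-term are a faithful elaboration of the same argument rather than a different one.
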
%

\begin{proof} %
Let $T:X_{\omega^{G}}\rightarrow X_{\omega^{G}}$ be non-decreasing %
and asymptotically $T$-regular map on some point $x_{0}\in X_{\omega^{G}}$ %
 define by %
$Tx=G_{x}+\mu_{1}$ and $Ty=G_{y}+\mu_{2}$. Then for $\lambda>0$, %
there exists $u\in X_{\omega^{G}}$ %
with $u\preceq Tu$ and $\omega^{G}_{\lambda}(u,Tu,Tu)$ is finite   %
for all $\lambda>0$. %
So from inequality  %
\ref{z}, we get by noticing that $\psi$ is continuous %
and sub-additive and there exists $\nu\in [0,\lambda)$ such that %
			
\begin{align}\label{}%
\psi(\omega^{G}_{\lambda}(Tx,Ty,Ty))\notag%
\le&%
\kappa_{1}(\psi(\omega^{G}_{\lambda}(x,y,y)))%
\psi(\omega^{G}_{\lambda+\nu(\lambda)}(x,y,y)) \notag%
+\kappa_{2}(\psi(\omega^{G}_{\lambda}(x,y,y)))%
\psi(\omega^{G}_{\lambda}(x,Tx,Tx))\\ \notag%
+& \kappa_{3}(\psi(\omega^{G}_{\lambda}(x,y,y)))%
\psi(\omega^{G}_{\lambda}(y,Ty,Ty)) \notag%
+\kappa_{4}(\psi(\omega^{G}_{\lambda}(x,y,y)))%
\psi(\omega^{G}_{\lambda}(Tx,Ty,Ty))\\ \notag%
+&\kappa_{5}(\psi(\omega^{G}_{\lambda}(x,y,y)))%
\psi(\omega^{G}_{\lambda}(Tx,y,y))\\ \notag%
+&\kappa_{6}(\psi(\omega^{G}_{\lambda}(x,y,y)))\notag%
\psi\Set{\frac{\omega^{G}_{\lambda}(y,Ty,Ty)%
[1+\omega^{G}_{\lambda}(x,y,y)]}{1+\omega^{G}_{\lambda}(x,Tx,Tx)}}\\ \notag%
+&\kappa_{7}(\psi(\omega^{G}_{\lambda}(x,y,y)))\notag%
\psi\Set{\frac{\omega^{G}_{\lambda}(y,Ty,Ty)%
[1+\omega^{G}_{\lambda}(x,Tx,Tx)]}{1+\omega^{G}_{\lambda}(x,y,y)}}\\ \notag%
+&\kappa_{8}(\psi(\omega^{G}_{\lambda}(x,y,y)))\notag%
\psi\Set{\frac{\omega^{G}_{\lambda}(x,Tx,Tx)%
[1+\omega^{G}_{\lambda}(x,y,y)]}{1+\omega^{G}_{\lambda}(y,Tx,Tx)%
+\omega^{G}_{\lambda}(x,y,y)}}\\ \notag%
+&\kappa_{9}(\psi(\omega^{G}_{\lambda}(x,y,y)))\notag%
\psi\Set{\frac{\omega^{G}_{\lambda}(x,y,y)%
[1+\omega^{G}_{\lambda}(y,Ty,Ty)]}{1+\omega^{G}_{\lambda}(Tx,Tx,y)%
+\omega^{G}_{\lambda}(Tx,Ty,Ty)}}\\ \notag%
+&\kappa_{10}(\psi(\omega^{G}_{\lambda}(x,y,y)))\notag%
\psi\Set{\frac{\omega^{G}_{\lambda}(Tx,Ty,Ty)%
[1+\omega^{G}_{\lambda}(x,Ty,Ty)]}{1+\omega^{G}_{\lambda}(Tx,Tx,y)%
+\omega^{G}_{\lambda}(x,Ty,Ty)}}\\ \notag%
+&\kappa_{11}(\psi(\omega^{G}_{\lambda}(x,y,y)))\\%
\times&\psi\Set{\frac{\omega^{G}_{\lambda}(Tx,Ty,Ty)%
[1+\omega^{G}_{\lambda}(y,Ty,Ty)+\omega^{G}_{\lambda}(Tx,Tx,y)]}%
{1+\omega^{G}_{\lambda}(Tx,Tx,y)+\omega^{G}_{\lambda}(Tx,Ty,Ty)}}.%
\end{align}
By \autoref{3.1}, then the system of integral Equations \ref{L} and \ref{LL}
have a unique solution in $X_{\omega^{G}}.$	
\end{proof}%

\begin{corollary} %
	Let $X_{\omega^{G}}=C([0,\Omega],[0,\Omega])$ be a %
	complete preordered modular G-metric space  and %
	 $\omega^{G}:(0,\infty)\times X_{\omega^{G}}\times %
	  X_{\omega^{G}}\times X_{\omega^{G}}\to [0,\Omega] %
	  \cup\{\infty\}$ be define by\begin{equation} %
	\omega^{G}_{\lambda}(x,y,y):=\frac{1}{(1+\lambda)} %
	\sup_{t\in[0,\Omega]}\{\norm{x(t)-y(t)}\}, \lambda>0. %
	\end{equation} %
	Let $G_{x}, G_{y}:[0,\Omega]\times[0,\Omega] %
	\times\mathbb{R}\rightarrow %
	\mathbb{R}$ and $\mu:\mathbb{R}\to\mathbb{R}$ are %
	 continuous, $\Omega>0$. %
	For all $t,s\in[0,\Omega]$, are such that  %
	$G_{x},G_{y}\in X_{\omega^{G}}$ for all  %
	$x,y\in X_{\omega^{G}}$ and $G_{x}$, $G_{y}$  %
	satisfying \autoref{12} and \autoref{13} %
	 respectively for all  $t,s\in[0,\Omega]$. %
	 Suppose that there exists non-negative real numbers  %
	  $\kappa_{1},\kappa_{2},\kappa_{3}>0$ with %
	 $\sum_{k=1}^{3}\kappa_{k}(\psi(\omega^{G}_{ %
	 	\lambda}(x,y,y)))\in\mathcal{S}_{Ger}$ %
	and $\kappa_{1}(t) %
	+2\max\{\sup_{t\geq 0}\kappa_{2}(t), \sup_{t\geq 0} %
	\kappa_{3}(t)\}<1,$ %
	for distinct $x,y\in X_{\omega^{G}}$ and for all %
	 $\lambda>0$, $\psi\in \bar{\Psi}$ such that %
	 inequality \ref{z} is satisfied for every %
	 $x,y\in X_{\omega^{G}}.$ Moreover if $\psi$ is %
	 sub-additive and for any $x,y\in X_{\omega^{G}}$, %
	 there exists $u\in X_{\omega^{G}}$ such that $u$ is %
	 comparable to both $x~{\rm and}~y$. %
	Then the system of integral Equations \ref{L} and \ref{LL} %
	have a unique solution in $X_{\omega^{G}}.$ %
\end{corollary}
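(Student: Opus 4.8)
The plan is to recast the two integral equations as a single fixed point problem for an appropriate self-map of $X_{\omega^{G}}=C([0,\Omega],[0,\Omega])$ and then to invoke \autoref{aaa}, the three-coefficient reduction of \autoref{3.1}. First I would define $T:X_{\omega^{G}}\to X_{\omega^{G}}$ by $Tx=G_{x}+\mu_{1}$, that is $Tx(t)=\int_{0}^{\Omega}H_{1}(t,s,x(s))\,ds+\mu_{1}(t)$, with the companion expression $Ty=G_{y}+\mu_{2}$ built from \ref{12} and \ref{13}. Continuity of $H_{1},H_{2},\mu$ guarantees $Tx\in X_{\omega^{G}}$, and the comparison hypotheses imposed on the kernels yield that $x\preceq y$ implies $Tx\preceq Ty$ pointwise, so $T$ is non-decreasing for the order $x\preceq y\iff x(t)\le y(t)$. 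A fixed point $x^{\ast}=Tx^{\ast}=G_{x^{\ast}}+\mu_{1}$ is by definition a solution of \ref{L}, and uniqueness of such a fixed point will deliver the uniqueness asserted for the system.

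The central step is to verify the contractive hypothesis of \autoref{aaa}. Starting from inequality \ref{z} truncated to its first three summands, I would use that $\omega^{G}_{\lambda}(Tx,Ty,Ty)=\frac{1}{1+\lambda}\sup_{t}\norm{Tx(t)-Ty(t)}=\frac{1}{1+\lambda}\sup_{t}\norm{G_{x}(t)+\mu_{1}(t)-G_{y}(t)-\mu_{2}(t)}$, so that the left-hand side of \ref{z} is exactly $\omega^{G}_{\lambda}(Tx,Ty,Ty)$. Next I would identify the three distance-type functionals with the terms of \ref{bb}: the definition of $D_{1}$ gives $\omega^{G}_{\lambda+\nu(\lambda)}(x,y,y)$; since $Tx=G_{x}+\mu_{1}$, $D_{2}$ reads $\frac{1}{1+\lambda}\norm{Tx(t)-x(t)}$ and hence equals $\omega^{G}_{\lambda}(x,Tx,Tx)$; likewise $D_{3}$ equals $\omega^{G}_{\lambda}(y,Ty,Ty)$. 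Applying the continuous, sub-additive $\psi\in\bar{\Psi}$ to \ref{z} and distributing $\psi$ across the sum then produces precisely
\begin{align*}
\psi(\omega^{G}_{\lambda}(Tx,Ty,Ty))
\le&\kappa_{1}(\psi(\omega^{G}_{\lambda}(x,y,y)))
\psi(\omega^{G}_{\lambda+\nu(\lambda)}(x,y,y))\\
+&\kappa_{2}(\psi(\omega^{G}_{\lambda}(x,y,y)))
\psi(\omega^{G}_{\lambda}(x,Tx,Tx))\\
+&\kappa_{3}(\psi(\omega^{G}_{\lambda}(x,y,y)))
\psi(\omega^{G}_{\lambda}(y,Ty,Ty)),
\end{align*}
which is condition (1) of \autoref{aaa}. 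Here the Geraghty membership $\kappa_{k}(\theta)=\frac{\sin\theta}{\theta}\in\mathcal{S}_{Ger}$ and the coefficient bound $\kappa_{1}(t)+2\max\{\sup_{t\ge0}\kappa_{2}(t),\sup_{t\ge0}\kappa_{3}(t)\}<1$ are exactly the hypotheses assumed.

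It then remains to check the remaining structural hypotheses of \autoref{aaa} and to read off the conclusion. Completeness of $X_{\omega^{G}}$ is assumed; the order-limit condition (a non-decreasing convergent sequence lies below its limit) is the standard monotone pointwise-limit property on $C([0,\Omega],[0,\Omega])$; and the existence of $u\in X_{\omega^{G}}$ with $u\preceq Tu$, with $\omega^{G}_{\lambda}(u,Tu,Tu)$ finite and $u$ comparable to both $x$ and $y$, is part of the hypothesis. Asymptotic $T$-regularity on the chosen starting point is what part (b) of the proof of \autoref{3.1} derives from the contraction, so it need not be assumed separately. With all hypotheses of \autoref{aaa} in force, $T$ has a unique fixed point $x^{\ast}\in X_{\omega^{G}}$ and $\{T^{n}x_{0}\}$ converges to it; this $x^{\ast}$ solves \ref{L}, and by the symmetric roles of $(G_{x},\mu_{1})$ and $(G_{y},\mu_{2})$ together with uniqueness it is also the unique solution of \ref{LL}, giving a unique common solution of the system.

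I expect the main obstacle to be the bookkeeping in the second paragraph: matching $D_{1},D_{2},D_{3}$ term-by-term with $\omega^{G}_{\lambda+\nu(\lambda)}(x,y,y)$, $\omega^{G}_{\lambda}(x,Tx,Tx)$ and $\omega^{G}_{\lambda}(y,Ty,Ty)$, and justifying that the Cauchy--Schwarz estimate \ref{4.1}, under $\sup_{t}\int_{0}^{\Omega}B^{2}(t,s)\,ds\le\frac{1}{\Omega}$, legitimately produces the Geraghty factor $\frac{\sin\theta}{\theta}$ with $\theta=\omega^{G}_{\lambda}(x,y,y)$; once this identification is secured, the rest is a direct appeal to \autoref{aaa}.
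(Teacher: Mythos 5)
Your proposal is correct and follows essentially the same route as the paper: define $Tx=G_{x}+\mu_{1}$ (with $Ty=G_{y}+\mu_{2}$), identify $D_{1},D_{2},D_{3}$ with $\omega^{G}_{\lambda+\nu(\lambda)}(x,y,y)$, $\omega^{G}_{\lambda}(x,Tx,Tx)$, $\omega^{G}_{\lambda}(y,Ty,Ty)$, apply the continuous sub-additive $\psi$ to inequality \ref{z} restricted to $k=1,2,3$, and invoke Corollary \ref{aaa}. Your write-up is in fact somewhat more careful than the paper's (e.g.\ in checking monotonicity of $T$ and in flagging that $T$ is built from two different kernels), but the argument is the same.
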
	%

\begin{proof} %
Let $T:X_{\omega^{G}}\rightarrow X_{\omega^{G}}$  %
be non-decreasing %
and asymptotically $T$-regular map on some point  %
$x_{0}\in X_{\omega^{G}}$ %
define by %
$Tx=G_{x}+\mu_{1}$ and $Ty=G_{y}+\mu_{2}$. %
Then for $\lambda>0$, %
there exists $u\in X_{\omega^{G}}$ %
with $u\preceq Tu$ and $\omega^{G}_{\lambda}(u,Tu,Tu)$  %
is finite  for all $\lambda>0$. %
So from inequality \ref{z}, we have %
$\omega^{G}_{\lambda}(Tx,Ty,Ty)=\frac{1} %
{1+\lambda}\sup_{t\in[0,\Omega]} %
\bigl\{\norm{G_{x}(t)+\mu_{1}(t)-G_{y}(t)- %
	\mu_{2}(t)}\bigr\}$, %
$\omega^{G}_{\lambda}(x,y,y)=\frac{1}{1+ %
\lambda+\nu(\lambda)}\norm{x(t)-y(t)}$, %
for $\nu(\lambda)\in [0,\lambda).$  %
$\omega^{G}_{\lambda}(x,Tx,Tx)=\dfrac{1}{1+\lambda} %
\norm{G_{x}(t)+\mu_{1}(t)-x(t)},$ and %
$\omega^{G}_{\lambda}(y,Ty,Ty)=\dfrac{1}{1+\lambda} %
\norm{G_{y}(t)+\mu_{2}(t)-y(t)}.$ %
Since $\psi$ is continuous %
and sub-additive, then by inequality \ref{z} for  %
$k=1,2,3$, we have %
\begin{align}\label{}%
\psi(\omega^{G}_{\lambda}(Tx,Ty,Ty))\notag %
\le&%
\kappa_{1}(\psi(\omega^{G}_{\lambda}(x,y,y))) %
\psi(\omega^{G}_{\lambda+\nu(\lambda)}(x,y,y)) \notag %
+\kappa_{2}(\psi(\omega^{G}_{\lambda}(x,y,y))) %
\psi(\omega^{G}_{\lambda}(x,Tx,Tx))\\ \notag %
+& \kappa_{3}(\psi(\omega^{G}_{\lambda}(x,y,y))) %
\psi(\omega^{G}_{\lambda}(y,Ty,Ty)). %
\end{align}%
By Corollary \ref{aaa}, then the system of  %
integral Equations \ref{L} and \ref{LL} %
have a unique solution in $X_{\omega^{G}}.$ %
\end{proof}	%
\newline	
\textbf{Data Availability}\newline
The data used to support the findings of this study are included within the
article.\newline
\newline
\textbf{Conflicts of Interest}\newline
The authors declare no conflict of interest.\newline
\newline
\textbf{Authors Contributions}\newline
All authors contributed equally to the writing of this paper.

%


\end{document}